\newcommand{\rset}{\mathbb{R}}
\newcommand{\Hhmax}{H_{\phi,\max} }
\newcommand{\inas}{\overset{\text{a.s.}}{\to}}
 \newcommand{\red}{\textcolor{black}}
\newtheorem{theorem}{Theorem}%  meant for continuous numbers
\newtheorem{definition}{Definition}%
\newtheorem{assumption}{Assumption}
\newtheorem{lemma}{Lemma}
\newtheorem{corollary}{Corollary}
\begin{document}

\title[Coordinate descent methods beyond smoothness and separability]{Coordinate descent methods beyond smoothness and separability}

%%=============================================================%%
%% Prefix	-> \pfx{Dr}
%% GivenName	-> \fnm{Joergen W.}
%% Particle	-> \spfx{van der} -> surname prefix
%% FamilyName	-> \sur{Ploeg}
%% Suffix	-> \sfx{IV}
%% NatureName	-> \tanm{Poet Laureate} -> Title after name
%% Degrees	-> \dgr{MSc, PhD}
%% \author*[1,2]{\pfx{Dr} \fnm{Joergen W.} \spfx{van der} \sur{Ploeg} \sfx{IV} \tanm{Poet Laureate} 
%%                 \dgr{MSc, PhD}}\email{iauthor@gmail.com}
%%=============================================================%%

\author[1]{\fnm{Flavia} \sur{Chorobura}}\email{flavia.chorobura@stud.acs.upb.ro}

\author*[1,2]{\fnm{Ion} \sur{Necoara}}\email{ion.necoara@upb.ro}
%\equalcont{These authors contributed equally to this work.}

%\author[1,2]{\fnm{Third} \sur{Author}}\email{iiiauthor@gmail.com}
%\equalcont{These authors contributed equally to this work.}

\affil*[1]{\orgdiv{Automatic Control and Systems
	Engineering Department}, \orgname{University Politehnica Bucharest}, \orgaddress{\street{Splaiul Independentei, 313}, \city{Bucharest}, \postcode{060042},  \country{Romania}}}

\affil[2]{\orgdiv{} \orgname{ Gheorghe Mihoc-Caius Iacob  Institute of Mathematical Statistics and Applied Mathematics of the Romanian Academy}, \orgaddress{\city{Bucharest}, \postcode{10587}, \country{Romania}}}

%\affil[3]{\orgdiv{Department}, \orgname{Organization}, \orgaddress{\street{Street}, \city{City}, \postcode{610101}, \state{State}, \country{Country}}}

%%==================================%%
%% sample for unstructured abstract %%
%%==================================%%

\abstract{This paper deals with convex nonsmooth optimization problems. We introduce a general smooth approximation framework for the original function and apply random (accelerated) coordinate descent methods for minimizing the corresponding smooth approximations. Our framework covers the most important classes of smoothing techniques from the literature. Based on this general framework for  the smooth approximation and using coordinate descent type methods we derive convergence rates in function values for the original objective.  Moreover, if the original function satisfies a growth condition, then we prove that the smooth approximations also inherits this condition and consequently the convergence  rates are improved in this case. We also present a relative randomized coordinate descent algorithm for solving nonseparable minimization problems with the objective function  relative smooth along coordinates w.r.t. a (possibly nonseparable) differentiable function. For this algorithm we also derive
 convergence rates in the convex case and under the growth condition for the objective.}

\keywords{Convex optimization, growth condition, nonsmooth and nonseparable objective, coordinate descent, convergence analysis.}

%%\pacs[JEL Classification]{D8, H51}

%%\pacs[MSC Classification]{35A01, 65L10, 65L12, 65L20, 65L70}

\maketitle
\section{Introduction}\label{sec1}
\noindent In this paper we study (accelerated) random (block) coordinate descent methods for solving nonsmooth convex optimization problems of the form: 
\vspace{-0.1cm}
\begin{align}
\label{eq:prob}
F^* = \min_{x \in \mathbb{R}^n} F(x),
\vspace{-0.4cm}
\end{align}
where $F: \mathbb{R}^n \to \bar{\mathbb{R}}$ is a general proper closed convex function (possibly nonseparable). First order methods for nonsmooth optimization, e.g. subgradient type methods, have convergence rates of order $\mathcal{O}(1/\epsilon^2)$, where $\epsilon$ is the desired  accuracy for the approximate solution. In order to improve convergence, smooth approximations of the original function can be considered and gradient type methods \cite{Nes:04} can be used for solving these smooth approximations. 
Some examples of smoothing techniques are Moreau envelope \cite{RocWet:98}, Forward-Backward envelope \cite{GisFal:18,SteThePat:17}, Douglas-Rachford envelope \cite{GisFal:18,PatSteBem:14}, Nesterov's smoothing \cite{Nes:05} and Gaussian smoothing \cite{NesSpo:17}. General properties for these smoothing techniques can be found in \cite{GisFal:18,Nes:05,NesSpo:17,PatSteBem:14,RocWet:98,SteThePat:17,NecFer:22,NecSuy:08}. Moreover, in e.g., \cite{Nes:05, NesSpo:17,NecFer:22,NecSuy:08} gradient type methods are considered for solving the smooth approximation. However, in large-scale problems the computation of the full gradient of the smooth approximation can be prohibitive.  In this case one can use coordinate descent algorithms \cite{FerQu:20,FerRic:15,NesSti:17,Nes:10} for solving the smooth approximation, as it was done e.g., in \cite{AbeBec:21,FerRic:17,NesSti:17}. On other hand, if the original function is still nonsmooth, but relative smooth w.r.t. a differentiable function, coordinate descent type methods still converge (sub)linearly \cite{LuFreNes:18}. Some examples of convex relative smooth functions were given in \cite{LuFreNes:18}. Relative coordinate descent algorithms were also considered in \cite{GaoLu:21,HanRic:21,HiePha:21}. However, in all these papers the original function is relative smooth along coordinates with respect to a separable function.

\medskip

\noindent \textit{Previous work:} In \cite{Nes:05} the following problem is considered
\vspace{-0.1cm}
\begin{equation}
F(x) = f(x) + \max_{u \in Q}\left\lbrace \langle Ax,u\rangle - {\phi}(u)\right\rbrace, \label{prob:NS}
\vspace{-0.1cm}
\end{equation}
with the gradient of $f$ assumed Lipschitz and a smooth approximation was proposed for this problem, which we call in this paper  Nesterov's smoothing. Moreover, \cite{Nes:05} also presents an accelerated gradient descent method for solving the smooth approximation for which sublinear rate  is obtained in the convex case. In the context of coordinate descent algorithms, \cite{NesSti:17} presents an accelerated coordinate descent algorithm  for solving the Nesterov's smooth  approximation.  Paper \cite{FerRic:17} considers the problem \eqref{prob:NS} with $f$ being separable, i.e., $f(x) =\sum_{i=1}^{N}f_{i}(x_{i})$, and considers the Nesterov's smoothing for the second term of the objective. Finally, a coordinate descent algorithm is used for solving the smooth approximation 
and rates of order $\mathcal{O}  \left( \frac{1}{\epsilon^2} \log \left( \frac{1}{\epsilon}\right)  \right)$ are derived in the convex case and  improved to $\mathcal{O}  \left( \frac{1}{\epsilon} \log \left( \frac{1}{\epsilon}\right)  \right)$, when the smooth approximation is strongly convex. Moreover, in \cite{AbeBec:21} an objective function of the form 
\vspace{-0.1cm}
\begin{align}
    \label{prob:FB}
    F(x) = \frac{1}{2}x^{T}Ax + b^{T}x  + \psi(x)
    \vspace{-0.1cm}
\end{align}
is considered, with $b \in \mathbb{R}^{n}$, $A\in\mathbb{R}^{n \times n}$ is a symmetric positive semidefinite matrix and $\psi$ is a proper closed convex function (possibly nonsmooth and nonseparable) which is proximal easy. Then, the Forward-Backward envelope is employed as a smooth approximation for the objective and a modified accelerated coordinate descent algorithm is used for solving this smooth approximation (in each iteration it is necessary to compute two proximal operators). Rate of order $\mathcal{O}  \left( \frac{1}{\epsilon^{3/2}} \right) $  is obtained in function values at some point $\Pi_{\text{dom} \psi}(x)$, where $\Pi_{\text{dom} \psi}(\cdot)$ is the projection onto the domain of $\psi$, provided that $\psi$ is Lipschitz in any bounded subset.  Finally, when the function is relative smooth along coordinates in \cite{HanRic:21} a relative randomized coordinate descent is proposed and sublinear rates are obtained in the convex case and linear rates for an objective that is relative strong with respect to a separable function. 

 \medskip
 
\noindent \textit{Contributions.} In this paper we first introduce a general  smoothing framework for the convex objective function $F$ in problem \eqref{eq:prob} and we use random (accelerated) coordinate descent methods from the literature to solve the smooth approximation. We also present a relative randomized coordinate descent algorithm for minimizing convex objective functions  that are relative smooth along coordinates w.r.t. a (possibly nonseparable) differentiable function. More precisely, our main contributions are: 

(i) We introduce a novel  smoothing framework for general  nonsmooth and nonseparable convex objective functions  (Assumption \ref{ass1}), which covers in particular the most important smooth approximations from the literature (Moreau envelope, Forward-Backward envelope, Douglas-Rachford envelope  and Nesterov's smoothing). 
\red{In the Moreau envelope, we assume that $F$ is a proper closed convex function and proximally easy. In the Forward-Backward and Douglas-Rachford envelopes our results are valid when $F$ has the form  \eqref{prob:FB}. In the Douglas-Rachford envelope, matrix  $\left( I_{n} + \gamma A\right)^{-1} $ must be computed easily, where $\gamma >0$ is the parameter of the smoothing and $I_{n}$ is the identity matrix. Finally, for Nesterov's smoothing our framework covers the case when $f$ has coordinate-wise Lipschitz gradient in the problem  \eqref{prob:NS}.} 
Then,  we consider random (accelerated)  coordinate descent algorithms for solving this smooth approximation. Under this general framework  we derive sublinear  convergence rates  in function values for the original problem \eqref{eq:prob}. 

(ii) Moreover, the rates are improved (to even linear) if the original objective function satisfies a $q$-growth condition (Assumption \ref{ass2}). We prove that Moreau envelope, Forward-Backward envelope and Nesterov's smoothing inherits such a $q$-growth. Table \ref{table:rates} summarizes  the main convergence results from this paper for each smoothing technique, where $\bar \kappa$ is the constant in  Assumption \ref{ass2}. 

(iii) We also consider nonsmooth objective functions (possibly nonseparable), but relative smooth along coordinates with respect to a given function  (possibly also nonseparable). For this problem a relative randomized coordinate descent algorithm is introduced, for which we derive sublinear rate when the objective $F$ is convex. Moreover, if the function $F$ satisfies a $q$-growth condition, we prove that the algorithm converges linearly with high probability.

\medskip 

\noindent The major difference between our result corresponding to the  Forward-Backward envelope and the result in \cite{AbeBec:21} is that a rate of order $\mathcal{O}(1/k^{1.5})$  is obtained in \cite{AbeBec:21} \red{in function values at the point $\Pi_{\text{dom} \psi}(x_{k})$, where $x_{k}$ is the iterate of the accelerated algorithm in \cite{AbeBec:21}, while we  derive an improved rate of order $\mathcal{O}(1/k^{2})$ in function values at the point $\text{prox}_{\gamma \psi}\left( x_{k} - \gamma \nabla f(x_{k})\right)$, where $x_{k}$ is the iterate of our Algorithm 2}. Another major difference is that the modified accelerated coordinate descent algorithm considered in \cite{AbeBec:21} requires computation of two proximal operators of $\psi$ per iteration, while in the algorithm considered in this paper only one proximal operator of $\psi$ is computed at each iteration. Moreover, in \cite{AbeBec:21} an extra assumption that $\psi$ is Lipschitz in any bounded subset is required, while we do not need this 
assumption  to obtain our convergence rates.

	\begin{table}[h!]
			\begin{tabular}{| c | c | c | c | c | c |}
				\hline
				\diagbox{Property}{Smoothing} & ME & FB & DR & NS & Result\\
				\hline
				\multicolumn{6}{|c|}{ Coordinate Descent Method} \\
				\hline
				Convex & $1/\epsilon$ & $1/ \epsilon$  & $1/\epsilon$  & $1 / \epsilon^2$ & Thm \ref{theo:2}  \\ \hline
				\multirow{1}{*}{$q$-growth}  & \multirow{1}{*}{$\ln \left(1/\epsilon\right) $} &\multirow{1}{*}{$\ln \left(1/\epsilon\right) $} & \multirow{1}{*}{-} &  \multirow{1}{*}{$(1/\epsilon)\ln \left(1/\epsilon\right)$} & Thm \ref{theo:qdif2F}\\%\ref{theo:q=2} \\
                 %& & & & &  %\& \ref{theo:qdif2}\\
				\hline 
				\multicolumn{6}{|c|}{ Accelerated Coordinate Descent Method} \\ \hline
				Convex  & $1/\sqrt{\epsilon}$ & $1/ \sqrt{\epsilon}$  & $1/\sqrt{\epsilon}$  & $1 / \epsilon$ & Thm \ref{theo:4}  \\ \hline 
				\multicolumn{6}{|c|}{ Restart Accelerated Coordinate Descent Method knowing a lower bound $\bar\kappa$} \\ \hline
				$q$-growth & $\ln \left(1/\epsilon\right) $ & $\ln \left(1/\epsilon\right) $ & - &  $(1/\sqrt{\epsilon})\ln \left(1/\epsilon\right)$ & Cor \ref{cor:rest}  \\ \hline
				\multicolumn{6}{|c|}{Restart Accelerated Coordinate Descent Method not knowing $\bar\kappa$} \\ \hline
				$q$-growth  & \multicolumn{2}{|c|}{$\ln \left(\frac{1}{\epsilon}\right) \log_{2} \left( \ln \left( \frac{1}{ \epsilon }  \right)     \right) $}  & - &  $\left(\frac{1}{\sqrt{\epsilon}}\right)\ln \left(\frac{1}{\epsilon}\right) \log_{2} \left( \ln \left( \frac{1}{ \epsilon }  \right)     \right) $  & Thm \ref{theo:rest} \\\hline
			\end{tabular}
		\caption{Convergence rates derived in this paper for  Moreau envelope (ME), Forward-Backward (FB) envelope, Douglas-Rachford (DR) envelope  and Nesterov's smoothing (NS) in the convex and $q$-growth cases. }
		\label{table:rates}
	\end{table}

\medskip 

\noindent \textit{Content.} The paper is organized as follows. In Section \ref{sec:pre} we present some definitions and preliminary results. We introduce our general smoothing framework  in Section \ref{sec:SmoothApprox}. In Section \ref{sec:qgrowth} we analyze the $q$-growth condition. Then, in Section \ref{sec:CDmethods}, we consider random (accelerated) coordinate descent algorithms and   derive  convergence rates in function values of the original objective in the convex and $q$-growth cases. In Section \ref{sec:RelSmooth}, we present a relative smooth coordinate descent algorithm and derive convergence in the convex and $q$-growth cases. Finally, in Section \ref{sec:Simulations} we provide detailed numerical simulations.

\medskip
\subsection{Preliminaries}
\label{sec:pre}
In this section we present some notations, definitions and  some preliminary results.
Let $U \in \mathbb{R}^{n\times n}$ be  a column permutation of the $n \times n$ identity matrix and further let $U = [U_{1},...,U_{N}]$ be a decomposition of $U$ into $N$ submatrices, with $U_{i} \in \mathbb{R}^{n \times n_{i}}$, where $\sum_{i = 0}^{N}n_{i} = n$. Any vector $x \in \rset^n$ can be written uniquely as $x = \sum_{i = 0}^{N} U_{i} x^{(i)}$, where $x^{(i)} = U_{i}^{T}x \in \mathbb{R}^{n_{i}}$.  For spaces $R^{n_i}$, let us fix some norms $ \|\cdot\|_{(i)}$, $i = 1,\cdots,N$. Moreover, $\|x\|^{2} =  \sum_{i=1}^{N} \|x^{(i)}\|^{2}$ is defined as the Euclidean norm and $\|\cdot\|_{\ell_p}$ is the $\ell_p$ norm, \red{for $p \neq 2$}. 
We consider $\delta_{B}$ the indicator function of the set $B$ and $e_{i}$ is the $i$-th component vector of the canonical basis. \red{We denote with $X^{*}$ the set of minimizers of problem \eqref{eq:prob}  and  with $\bar{x} = [x]_{X^{*}}$, the projection of $x$ onto $X^{*}$ (all the letters with bar above denote the projection of a point onto $X^{*}$).} 
For a given function $F$ and $y \in \text{dom} F$ its level set is defined as
%\vspace{-0.1cm}
\begin{equation*}
\mathcal{L}_{F}(y) = \{x \in \text{dom} F: \; F(x)\leq F(y)\}.
%\vspace{-0.1cm}
\end{equation*}

\noindent Next, we present the definition of convexity along coordinates.

\begin{definition}
\label{def:concor}
For any fixed $x \in \mathbb{R}^{n}$ and $i =1,\cdots,N$ denote $\phi^{x}_{i}:\mathbb{R}^{n_{i}}\to \mathbb{R}$ as:
\vspace{-0.2cm}
\begin{equation}
\phi^{x}_{i}(d) = \psi(x + U_{i}d). \label{eq:phi}
\end{equation}
 We say that $\psi:\mathbb{R}^{n}\to \mathbb{R}$ is (strictly) convex along coordinates if the partial functions $\phi^{x}_{i}:\mathbb{R}^{n_{i}}\to \mathbb{R}$ are (strictly) convex for all $x \in \mathbb{R}^{n}$ and $i=1,\cdots,N$.
\end{definition}

\noindent Let us define the notion of block coordinate-wise Lipschitz gradient \cite{Nes:10}. 
\begin{definition}
    \label{def:lip} 
    Given a closed convex set $X \subseteq \mathbb{R}^{n}$ and
    a differentiable function $f:\mathbb{R}^{n} \to \mathbb{R}$. The gradient of $f$ is block coordinate-wise Lipschitz continuous on $X$, with constants $L_{i}>0$, if for all $x,x + U_{i} h \in X$ and $i=1,\cdots,N$, the following inequality holds
    \vspace{-0.2cm}
    \begin{align}
        \|U_{i}^{T}(\nabla f(x + U_{i} h) - \nabla f(x)) \|_{(i)} \leq L_{i} \|h\|_{(i)}. \label{eq:54}
    \end{align}
\end{definition}

\noindent  If \eqref{eq:54} holds, then we have the following relation \cite{Nes:10}:
\vspace{-0.2cm}
\begin{align}
\label{lip2}
|f(x+U_{i} h) - f(x) - \langle U_{i}^{T} \nabla f(x), h \rangle| \leq \frac{L_{i}}{2} \|h\|^2_{(i)}.
\end{align}

\noindent When $N=1$ in the inequality above, we say the function $f$ has Lipschitz gradient. Next result was proved for the case $N=1$, see for example \red{  \cite[Thm 2.1.5]{Nes:04}}. We adapt this result for the (block) coordinate case. 
\begin{lemma} 
	\label{lem:1}
	Consider $f:\mathbb{R}^{n} \to \mathbb{R}$ a function and $X \subseteq \mathbb{R}^{n}$ a closed convex set. Assume that $f$ is convex along coordinates and the following inequality holds for all $x, x+U_{i} h\in X$ and $i=1,\cdots,N$
    \vspace{-0.2cm}
	\begin{equation}
	\label{lip:3}	
	f(x+U_{i} h) - f(x) - \langle U_{i}^{T} \nabla f(x), h \rangle \leq \dfrac{L_{i}}{2} \|h\|_{(i)}^2. 
	\end{equation}

	\noindent Then, we have:
    \vspace{-0.2cm}
    \begin{equation*}
	f(x+U_{i}h) - \langle U_{i}^{T}\nabla f(x+U_{i}h),h \rangle +  \dfrac{1}{2L_{i}}  \|  U_{i}^{T}(\nabla f(x+U_{i}h) - \nabla f(x))\|^{2}_{(i)} \leq f(x),
    \end{equation*}
    \noindent and
    \vspace{-0.2cm}
	\begin{equation*}
	\|U_{i}^{T}(\nabla f(x + U_{i} h) - \nabla f(x)) \|_{(i)} \leq L_{i} \|h\|_{(i)}.
	\end{equation*}
\end{lemma}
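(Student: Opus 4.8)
The plan is to mimic the classical proof of \cite[Thm 2.1.5]{Nes:04}, but carried out one block at a time. Fix $x$ and $h$ with $x, x+U_i h \in X$, and fix the index $i$. The idea is to apply the given descent inequality \eqref{lip:3} not at $x$ but at a cleverly chosen point on the line through $x$ in the $i$-th coordinate direction, namely the point that the gradient step from $x+U_i h$ would land on. Concretely, introduce the function $g(y) = f(x+U_i(h+y)) - f(x) - \langle U_i^T\nabla f(x+U_i h), y\rangle$ for $y\in\mathbb{R}^{n_i}$; equivalently, work with $\phi_i^{x}(\cdot)$ from \eqref{eq:phi} and its linearization at the point $h$. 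By convexity along coordinates, $g$ is convex, it is differentiable in $y$ with $\nabla g(y) = U_i^T(\nabla f(x+U_i(h+y)) - \nabla f(x+U_i h))$, it satisfies the same $L_i$-descent inequality \eqref{lip:3} (the linear term drops out), and it has a global minimum at $y=0$ with $g(0)=0$.

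First I would record the consequence of the descent inequality for $g$: for any $y, y+U_i t$ admissible, $g(y+U_i t) \le g(y) + \langle \nabla g(y), t\rangle + \frac{L_i}{2}\|t\|_{(i)}^2$ — here there is a minor technical point that $g$ is a function on $\mathbb{R}^{n_i}$ so the "$U_i$" bookkeeping is just the identity on that block; I would phrase everything directly in terms of $\phi_i^{x}$ to avoid clutter. Next, evaluate this at $y$ equal to the argument that minimizes the right-hand side as a function of $t$ from the base point $y=0$: choosing $t = -\frac{1}{L_i}\nabla g(0)$ gives $\min_t\big[g(0)+\langle\nabla g(0),t\rangle+\frac{L_i}{2}\|t\|_{(i)}^2\big] = g(0) - \frac{1}{2L_i}\|\nabla g(0)\|_{(i)}^2$, using that $\|\cdot\|_{(i)}$ is the Euclidean norm on $\mathbb{R}^{n_i}$ (this is where the Euclidean assumption on the block norm is used, via $t\mapsto\langle\nabla g(0),t\rangle+\frac{L_i}{2}\|t\|^2$ being minimized at $t=-\nabla g(0)/L_i$). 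Since $g(y)\ge g(0)=0$ for all admissible $y$ (as $y=0$ is the global minimizer of the convex function $g$), and the minimum over $t$ of the upper bound is an upper bound for some admissible value of $g$, we conclude $0 \le g(0) - \frac{1}{2L_i}\|\nabla g(0)\|_{(i)}^2$ — wait, that has the wrong sign, so instead the correct chain is: $0 = g(0) \le \min_t\{\cdots\} $ is false; rather I apply the descent inequality the other way. Let me restate: apply \eqref{lip:3}-for-$g$ with base point $y = -\frac{1}{L_i}\nabla g(0)$ and step landing at $0$, OR more cleanly, use that $g(0) \le g(t) + \langle \nabla g(0), t\rangle + \frac{L_i}{2}\|t\|^2$ is the descent inequality read from $t$ toward $0$ combined with convexity $g(t)\ge g(0) + \langle\nabla g(0),t\rangle$; subtracting and optimizing over $t$ yields the stated inequality $\phi_i^{x}(h) - \langle\nabla\phi_i^{x}(h),h\rangle + \frac{1}{2L_i}\|\nabla\phi_i^{x}(h) - \nabla\phi_i^{x}(0)\|_{(i)}^2 \le \phi_i^{x}(0)$, which is exactly the first claimed inequality after unwinding $\nabla\phi_i^{x}(0) = U_i^T\nabla f(x)$ and $\nabla\phi_i^{x}(h) = U_i^T\nabla f(x+U_i h)$ and $\phi_i^{x}(h)-\phi_i^{x}(0) = f(x+U_ih)-f(x)$.

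For the second claimed inequality (the coordinate-wise Lipschitz bound \eqref{eq:54}), I would write the first inequality twice — once as stated, and once with the roles of $x$ and $x+U_i h$ swapped (i.e., with base point $x+U_i h$ and step $-U_i h$, which is also admissible since $X$ is convex) — and add the two. The linear cross terms combine into $-\langle U_i^T(\nabla f(x+U_i h) - \nabla f(x)), h\rangle$ and the function-value terms cancel, leaving $\frac{1}{L_i}\|U_i^T(\nabla f(x+U_i h)-\nabla f(x))\|_{(i)}^2 \le \langle U_i^T(\nabla f(x+U_i h)-\nabla f(x)), h\rangle$; then Cauchy–Schwarz on the right-hand side (again using that $\|\cdot\|_{(i)}$ is Euclidean) and dividing through gives the result. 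The main obstacle — really the only subtlety — is the first step: making sure the auxiliary point at which the descent inequality is applied stays in the set $X$ and correctly handling the block-norm optimization; once the reduction to the one-block convex function $\phi_i^{x}$ is set up cleanly, the rest is the textbook argument verbatim.
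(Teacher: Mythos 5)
Your proposal follows essentially the same route as the paper's appendix proof: the paper defines the auxiliary function $r(h)=f(x+U_{i}h)-\langle U_{i}^{T}\nabla f(x+U_{i}h_{0}),h\rangle$, observes that convexity along coordinates makes $r$ convex with minimizer $h_{0}$, compares $r(h_{0})$ with the value of $r$ after one gradient step of length $1/L_{i}$ from $0$ using \eqref{lip:3}, and then obtains the Lipschitz bound by symmetrizing and applying Cauchy--Schwarz --- exactly your plan, up to the translation $y=h'-h$ in your parametrization of the auxiliary function.

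One remark on the place where your writeup wobbles. With your normalization the minimizer of $g$ sits at $y=0$ where $\nabla g(0)=0$ (and note $g(0)=f(x+U_{i}h)-f(x)$, not $0$), so taking the gradient step \emph{from} $y=0$ gives nothing --- that is the source of your sign confusion. The step must be taken from the point representing $x$, i.e.\ $y=-h$: since $0$ is the global minimizer, $g(0)\le g\bigl(-h-\tfrac{1}{L_{i}}\nabla g(-h)\bigr)\le g(-h)-\tfrac{1}{2L_{i}}\|\nabla g(-h)\|_{(i)}^{2}$, where the second inequality is the descent bound at base point $-h$ minimized over the landing point. Unwinding $g(0)$, $g(-h)$ and $\nabla g(-h)=U_{i}^{T}(\nabla f(x)-\nabla f(x+U_{i}h))$ gives the first claimed inequality directly; this is precisely the paper's chain with $r$ in place of $g$. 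Your "combine convexity with the descent inequality and optimize over the landing point" fix is the same computation once the landing point is left free rather than pinned at $0$. The symmetrization and Cauchy--Schwarz step for the second inequality is correct as written, and your caveat about the auxiliary point remaining in $X$ is a legitimate one that the paper's own proof also leaves implicit.
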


\red{
\begin{proof}
	See appendix for a proof. 
\end{proof}
}

\section{Smooth approximations for convex problems}
\label{sec:SmoothApprox}
In this section we introduce a general smoothing framework for convex functions. Our framework covers the most important smoothing techniques from literature: Moreau, Forward-Backward, Douglas-Rachford envelope and Nesterov's smoothing.

\begin{assumption}
	\label{ass1}
	Consider $F_\gamma(x)$, a smooth approximation for the function $F$ (with parameter $\gamma>0$). Assume that:
	\begin{itemize}
		\item [] A.1 Consider $F_\gamma^{*}=\min F_\gamma(x) $. We have that $F_\gamma^{*} \leq F^{*}$.  
		\item [] A.2 There exist operators $\mathcal{B}$ and $\mathcal{C}$ and  some constant $D \geq 0$ such that  \vspace{-0.2cm}
        \begin{align*}
		F(\mathcal{B}(x)) - \gamma D \leq F_{\gamma}(x) \leq F(\mathcal{C}(x)), \quad \forall x \in \text{dom} F.
        \vspace{-0.2cm}
        \end{align*}
		\item [] A.3 The gradient of $F_\gamma$ is block coordinate-wise Lipschitz continuous on $\mathbb{R}^{n}$, with constants $L_i(F_\gamma)>0$, see Definition \ref{def:lip}.
		\item [] A.4 $F_\gamma(x)$ is convex. 
	\end{itemize}
\end{assumption}

\noindent Next, we  give some examples of smooth approximations that satisfy Assumption \ref{ass1}. % For example, we can use the Moreau envelope when the function $F$ is proximal easy; we can use the Forward-Backward and Douglas-Rachford envelopes when $F=f+\psi$, with $f$ quadratic, i.e., $f(x) = x^{T}Ax + b^{T}x$ and $\psi$ is proximal easy (in the Douglas-Rachford envelope, matrix  $\left( I_{n} + \gamma A\right)^{-1} $ should also be computed easily, where $I_{n}$ is the identity matrix); moreover, the smoothing technique proposed in \cite{Nes:05} fits the Assumption \ref{ass1} and it can be used when the function is sufficiently simple.
The reader may find others examples of smoothing techniques that fit our framework. 

\subsection{Moreau envelope}
\label{sec:Moreau}
As a first example of a smoothing technique we have the Moreau Envelope \cite{RocWet:98}. The Moreau envelope of a function $F$ with $\gamma>0$ is defined as 
\begin{equation}
F_{\gamma}(x) := \min_{u \in \mathbb{R}^{n}} \left\lbrace F(u) + \dfrac{1}{2\gamma}\|u-x\|^2\right\rbrace, \label{eq:ME}
\end{equation}

\noindent while the minimizer of the problem above, called proximal mapping, is 
\begin{equation}
\text{prox}_{\gamma F}(x) := \arg \min_{u \in \mathbb{R}^{n}} \left\lbrace F(u) + \dfrac{1}{2\gamma}\|u-x\|^2\right\rbrace. 
\end{equation}

\noindent Below, we show that the Moreau Envelope satisfies Assumption \ref{ass1}:  

   \noindent A.1  It is well-known that \cite{RocWet:98}:
	\begin{equation}
    \label{eq:30}
	F^{*} = \min_{x \in \mathbb{R}^{n}} F(x) = \min_{x \in \mathbb{R}^{n}} F_{\gamma}(x) = F^{*}_{\gamma} \quad \text{and} \quad \arg \min_{x \in \mathbb{R}^{n}} F(x) = \arg \min_{x \in \mathbb{R}^{n}} F_{\gamma}(x). 
	\end{equation}
	\noindent 
	%Indeed, note that 
	%\begin{align*}
	%\min_{x \in \mathbb{R}^{n}} F_{\gamma}(x) &= \min_{x \in \mathbb{R}^{n}}  \min_{u \in \mathbb{R}^{n}} \left\lbrace F(u) + \dfrac{1}{2\gamma}\|u-x\|^2\right\rbrace \\
	%&= \min_{u \in \mathbb{R}^{n}}  \min_{x \in \mathbb{R}^{n}} \left\lbrace F(u) + \dfrac{1}{2\gamma}\|u-x\|^2\right\rbrace = \min_{u \in \mathbb{R}^{n}}   F(u).
	%\end{align*}
	
	 \noindent A.2 Consider $\hat{u} = \text{prox}_{\gamma F}(x)$. A direct implication from the definition of $F_{\gamma}$ is that $F(\hat{u}) \leq F_{\gamma}(x) \leq F(x)$, for all $\gamma > 0$. Hence $\mathcal{B}(x) =  \text{prox}_{\gamma F}(x)$, $\mathcal{C}$ is the identity operator and
    $D=0$.
	 
	 \noindent A.3 If $F$ is a proper closed convex function, then $F_{\gamma}$ is differentiable and its gradient \cite{RocWet:98}
	\begin{equation}
	\nabla F_{\gamma}(x) = \gamma^{-1}\left(x - \text{prox}_{\gamma F}(x)\right)   
	\end{equation}
	
	\noindent is $\gamma^{-1}$-Lipschitz continuous. Moreover, note that the coordinate-wise Lipschitz constant of $\nabla F_{\gamma}$ is also $L_{i}(F_{\gamma}) =\gamma^{-1}$. In fact, 
	\begin{align*}
	&\|U_{i}^{T}(\nabla F_\gamma(x + U_{i} h) - \nabla F_\gamma(x)) \|_{(i)} \leq	\|\nabla F_\gamma(x + U_{i} h) - \nabla F_\gamma(x) \| \leq \gamma^{-1} \|h\|_{(i)}.
	\end{align*}

	 \noindent A.4 If $F$ is convex, then $F_{\gamma}$ is also convex (see for example \cite{RocWet:98}).

\subsection{Forward-Backward envelope}
\label{sec:Forward-Backward}
Consider the problem \eqref{eq:prob} defined as the following composite problem
\begin{equation}
F^* = \min_{x \in \mathbb{R}^n} F(x) := f(x) + \psi(x),  \label{eq:CP} 
\end{equation}

\noindent where $\psi$ is a proper closed convex function. Moreover, $\psi$ is proximal easy and $f$ has Lipschitz gradient with constant $L>0$. The Forward-Backward envelope  of the pair $(f,\psi)$ with a smooth parameter  $\gamma > 0$ is the function \cite{GisFal:18,SteThePat:17}:
\begin{equation}
F_{\gamma}(x) = f(x) - \dfrac{\gamma}{2}\|\nabla f(x) \|^2 + \psi^{\gamma}(x - \gamma \nabla f(x)), \label{eq:FBE1}
\end{equation}
where $\psi^{\gamma}$ is the Moreau envelope of the function $\psi$ defined in \eqref{eq:ME}. Note that the Forward-Backward envelope can also be defined as
\vspace{-0.2cm}
\begin{equation}
F_{\gamma}(x)  = \min_{u \in \mathbb{R}^{n}} f(x) + \langle \nabla f(x), u-x \rangle + \dfrac{1}{2\gamma} \|u-x\|^2 + \psi(u).	\label{eq:FBE2}
\vspace{-0.2cm}
\end{equation}

\noindent Now, we summarize the results obtained in \cite{GisFal:18,SteThePat:17} that fits in our framework.
If $f$ is twice differentiable and has Lipschitz gradient, and $\psi$ is a proper closed convex function, then $F_{\gamma}$ is continuously differentiable with
\begin{equation}
\nabla F_{\gamma}(x) = \gamma^{-1}\left(I- \gamma \nabla^2 f(x)\right)\left[ x -  \text{prox}_{\gamma \psi}(x - \gamma \nabla f(x))\right] \label{eq:grad}.
\end{equation}

\noindent \red{Moreover, Forward-Backward envelope satisfies Assumption \ref{ass1}  when $f$ is quadratic}:

	\noindent A.1 For all $\gamma L \in \left(0, 1\right]$, we have
	\begin{equation}
    \label{eq:31}
	\arg \min_{x \in \mathbb{R}^{n}} F(x) \subset \arg \min_{x \in \mathbb{R}^{n}} F_{\gamma}(x) \quad
	\text{and} \quad F^{*} = \min_{x \in \mathbb{R}^{n}} F(x) = \min_{x \in \mathbb{R}^{n}} F_{\gamma}(x) = F^{*}_{\gamma} . 
	\end{equation}
	\noindent Moreover, if $\gamma L \in \left(0,1\right)$, we have
	\begin{equation}
    \label{eq:32}
	\arg \min_{x \in \mathbb{R}^{n}} F(x) = \arg \min_{x \in \mathbb{R}^{n}} F_{\gamma}(x).
	\end{equation}
	
	\noindent A.2 Consider $\hat{u} =  \text{prox}_{\gamma \psi}\left( x - \gamma \nabla f(x)\right) $, recall that $F(\hat{u}) \leq F_{\gamma}(x)$ for $\gamma L \in \left(0, 1\right]$. Moreover, from the definition of $F_{\gamma}$, we have $F_{\gamma}(x) \leq F(x)$, for all $\gamma > 0$. Hence, $\mathcal{B}(x) = \text{prox}_{\gamma \psi}\left( x - \gamma \nabla f(x)\right)$, $\mathcal{C}$ is the identity operator and $D=0$.
	
	\noindent A.3 Consider 
	\begin{equation}
	F(x) = f(x) + \psi(x) = \frac{1}{2}x^{T}Ax + b^{T}x + \psi(x), \label{eq:1}
	\end{equation}
	
	\noindent for $A \in \mathbb{R}^{n \times n}$ a symmetric positive semidefinite matrix and $b \in \mathbb{R}^{n}$ and $\psi$ is a proper closed convex function. Then, \eqref{eq:grad} becomes
	\begin{equation}
	\nabla F_{\gamma}(x) = \gamma^{-1}\left(I- \gamma A\right)\left[ x -  \text{prox}_{\gamma \psi}(x - \gamma \nabla f(x))\right]. \label{eq:gradquad} 
	\end{equation}
	
	\noindent If $\gamma L \in \left(0,1\right)$,  then the gradient \eqref{eq:gradquad} is Lipschitz continuous with constant $\gamma^{-1}(1-\gamma \lambda_{\min}(A))$, where $\lambda_{\min}(A)$ is the smallest eigenvalue of A.  Moreover, for  $\gamma L \in \left(0,1\right)$, using Proposition 4.3 in \cite{GisFal:18} and Lemma \ref{lem:1}, we get that the $i$th coordinate-wise Lipschitz constant of $\nabla F_{\gamma}$ in  \eqref{eq:gradquad} is  $$L_{i}(F_{\gamma})  = \dfrac{\| U_{i}^{T} \left( I_{n} - \gamma A\right)U_{i} \|  }{\gamma} = \dfrac{\|I_{n_{i} } - \gamma A_{ii}\|  }{\gamma}.$$

    \noindent\red{Note that the gradient, $\nabla F_{\gamma}$, from \eqref{eq:gradquad} is in fact  Lipschitz and block coordinate-wise Lipschitz for all $\gamma>0$. However, the Lipschitz and coordinate-wise Lipschitz constants take larger values  when $\gamma L \geq 1$ than when $\gamma L < 1$. In fact, a more general result holds, i.e.,   the gradient, $\nabla F_{\gamma}$, given in \eqref{eq:grad} is Lipschitz (coordinate-wise Lipschitz) on a set $X\subset \mathbb{R}^{n}$ when $f$ is convex, the hessian of $f$ is Lipschitz (coordinate-wise Lipschitz) and there exists a constant $R>0$ such that $\|x -  \text{prox}_{\gamma \psi}(x - \gamma \nabla f(x))\| \leq R \quad \forall x \in X$. However, in our analysis we consider $\gamma L \in \left(0,1\right)$ and $f$ quadratic, since under these conditions convexity is also preserved, see the next statement.}

\medskip 
 
	\noindent A.4 When $\gamma L \in \left(0, 1\right)$ and 
	$F$ is defined as in  \eqref{eq:1}, we have $F_{\gamma}$ also convex, \red{see   \cite[Proposition  4.4]{GisFal:18}.}

\subsection{Douglas-Rachford envelope}
\label{sec:Douglas-Rachford}
Let us present the results obtained in \cite{GisFal:18,PatSteBem:14} that are relevant in this paper. Consider the problem \eqref{eq:CP} where $\psi$ is a proper closed convex function and $f$ has Lipschtz gradient with constant $L >
0$. The Douglas-Rachford envelope for the problem \eqref{eq:CP} is defined as:
\begin{equation}
F_{\gamma}(x) = f^{\gamma}(x) - \gamma\|\nabla f^{\gamma}(x) \|^2 + \psi^{\gamma}(x - 2\gamma \nabla f^{\gamma}(x)),
\end{equation}

\noindent where $f^{\gamma}, \psi^{\gamma}$ are the Moreau Envelopes of the functions $f$ and $\psi$, defined in \eqref{eq:ME}. The Douglas-Rachford envelope can also be expressed as
\vspace{-0.2cm}
\begin{equation*}
F_{\gamma}(x) =  \min_{u \in \mathbb{R}^{n}} f\left( P_{\gamma} (x) \right)	+ \langle \nabla f \left( P_{\gamma} (x) \right), u -  P_{\gamma} (x) \rangle + \dfrac{1}{2\gamma} \|u -  P_{\gamma} (x) \|^2 + \psi(u),
\vspace{-0.2cm}
\end{equation*}

\noindent with $P_{\gamma} (x) \!=\!  \text{prox}_{\gamma f} (x)$.  Moreover, Douglas-Rachford envelope satisfies Assumption \ref{ass1} when f is quadratic: \\

 \noindent A.1 If $\gamma L \in \left(0, 1\right)$   we have: 
    \vspace{-0.2cm}
	\begin{equation*}
	 F^{*} = \min_{x \in \mathbb{R}^{n}} F(x) = \min_{x \in \mathbb{R}^{n}} F_{\gamma}(x) = F_{\gamma}^{*} \quad \text{and} \quad
	\arg \min_{x \in \mathbb{R}^{n}} F(x) = \text{prox}_{\gamma f}\left( \arg \min_{x \in \mathbb{R}^{n}} F_{\gamma}(x)\right).
     \vspace{-0.2cm}
	\end{equation*}	
    A.2 Consider $\hat{u} =  \text{prox}_{\gamma f}(x)$ and $\hat{v} =  \text{prox}_{\gamma \psi}\left( 2\hat{u} - x \right) $, if $\gamma L \in \left(0, 1\right)$ we have \red{$F(\hat{v}) \leq F_{\gamma}(x)$}.
    Moreover, $F_{\gamma}(x) \leq F(\hat{u})$, for all $\gamma>0$. 
    Hence $\mathcal{B}(x) = \text{prox}_{\gamma \psi}\left( 2\text{prox}_{\gamma f}(x) - x \right)$, $\mathcal{C}(x) = \text{prox}_{\gamma f}(x)$ and $D=0$. 
 
\noindent A.3 Consider
	$F$ defined as in \eqref{eq:1}. If $\gamma L \in \left(0, 1\right)$ we have $\nabla F_{\gamma}$ is Lipschitz continuous:
    \vspace{-0.2cm}
	\begin{equation*}
	\nabla F_{\gamma}  (x)= \dfrac{1}{\gamma}\left( \dfrac{2}{\gamma} \left( \gamma^{-1} I_{n} +  A\right)^{-1}  - I_{n}\right) G(x),
    \vspace{-0.2cm}
	\end{equation*}
	
	\noindent with
    \vspace{-0.2cm}
	\begin{align*}
	G(x) &= \left( \gamma^{-1} I_{n} +  A\right)^{-1} (\gamma^{-1}x- b) - \text{prox}_{\gamma \psi}\left( 2\left( \gamma^{-1}I_{n} +  A\right)^{-1} (\gamma^{-1} x- b) -x\right).
	\end{align*}
	\noindent Matrix $\left( I_{n} + \gamma A\right)^{-1} $ can be computed easily e.g.,  when $A$ is block diagonal.  For  \eqref{eq:1}, using Proposition 4.5 in \cite{GisFal:18} and Lemma \ref{lem:1}, we get that the coordinate-wise Lipschitz constant of $\nabla F_{\gamma}$ is $L_{i}(F_{\gamma}) =\dfrac{\|U_{i}^{T}\left( P+P^2 \right) U_{i}\|}{\gamma}$, with $P=
	2\left( I_{n} + \gamma A\right)^{-1} - I_{n}$. \\
	
	\noindent A.4 When $\gamma L\in \left(0, 1\right)$ and 
	$F$ is defined as \eqref{eq:1}, we have $F_{\gamma}$ also convex, \red{see   \cite[Proposition  4.6]{GisFal:18}.}

\subsection{Nesterov's smoothing}
Consider the function
\vspace{-0.2cm}
\begin{equation}
F(x) = f(x) + \max_{u \in Q}\left\lbrace \langle Ax,u\rangle - {\phi}(u)\right\rbrace, \label{eq:NSprob}
\vspace{-0.2cm}
\end{equation}

\noindent  where $Q \subset \mathbb{R}^{m}$ is a closed convex bounded subset,  ${\phi}$ a continuous convex function on Q, and $f$ a convex function having a coordinate-wise Lipschitz gradient, with constants $L_{i}(f)>0$, for $i=1,\cdots,N$. Examples of function that satisfies \eqref{eq:NSprob} can be found in \cite{Nes:05}. Let us recall the main properties of  Nesterov's smoothing  \cite{Nes:05,NesSti:17}:
\vspace{-0.2cm}
\begin{equation}
F_{\gamma}(x) = \red{f(x)} + \max_{u \in Q}\left\lbrace \langle Ax,u\rangle - {\phi}(u) - \gamma d(u)\right\rbrace, \label{eq:smNes}
\vspace{-0.2cm}
\end{equation}

\noindent where $d$ is strongly convex function on $Q$ w.r.t. some norm \red{$\|\cdot\|_{W}$ }with parameter $\sigma>0$ and $d(u) \geq 0$ for all $u\in Q$. Then, Nesterov's smoothing satisfies Assumption \ref{ass1}:\\

\noindent A.1 Defining  $ \bar D = \max_{u} d(u)$, we have that: 
\vspace{-0.2cm}
\begin{equation}
	F_{\gamma}(x) \leq F(x) \leq F_{\gamma}(x) + \gamma \bar D, \quad \forall x \in \mathbb{R}^{n}, \label{ineq2:NS}
 \vspace{-0.2cm}
\end{equation}
\noindent Hence $F_{\gamma}^{*} \leq F^{*}$.

\noindent A.2  From \eqref{ineq2:NS}, we have $D=\bar D$. Moreover, $\mathcal{B}$ and $\mathcal{C}$ are the identity operators. 
	
\noindent A.3  The gradient of $F_{\gamma}$ is
    \vspace{-0.2cm}
	\begin{equation}
	\nabla F_{\gamma}(x) = \nabla f(x) + A^{T} u_{\gamma}(x), \label{grad:NS}
    \vspace{-0.2cm}
	\end{equation}
	
	\noindent where $u_{\gamma}(x)$ is the unique solution of the optimization problem in \eqref{eq:smNes}. Moreover, the gradient of $F_{\gamma}$ is coordinate-wise Lipschitz with constant 
    \vspace{-0.2cm}
    \begin{align}
    \label{eq:49}
    L_{i}(F_{\gamma}) =  L_{i}(f) +   \dfrac{1}{\gamma}\left( \|Ae_{i}\|^{*}_{U}\right)^2, \;   \text{where} \;  \|Ae_{i}\|_{U}^{*} = \max_{u} \{ \langle Ae_{i},u\rangle : \|u\|_{U} \leq 1\}. 
    \vspace{-0.2cm}
    \end{align}	
\noindent A.4 \red{Note that $F_{\gamma}$ is a maximum of linear functions in $x$, hence it is convex. }
\\

\noindent Based on the previous examples, we can see that our framework is general and covers a wide range of smoothing techniques.

%%%%%%%%%%%%%%%%%%%%%%%%%%
%%%%%%%%%%%%%%%%%%%%%%%%%%

\section{Functional $q$-growth}
\label{sec:qgrowth}
Strong convexity type conditions, such as $q$-growth property,  allow to derive  linear rates  for first order methods, see \cite{NecNesGli:19}. In this section we prove that smooth approximations of objective functions satisfying some $q$-growth condition inherits such a property.  \red{Recall that $X^{*}$ is the set of minimizers of problem \eqref{eq:prob}  and  $\bar{x} = [x]_{X^{*}}$ is the projection of $x$ onto $X^{*}$.}
In this section we consider the following additional assumption on  $F$. 

\begin{assumption}
	\label{ass:qgrowth}  
 Consider  $X \subseteq \text{dom} F$. We assume that the function $F$ satisfies a \textit{functional $q$-growth} on $X$, i.e.,  there exists a constant $\kappa>0$ and $q\in [1,2]$ such that: 
    \vspace{-0.2cm}
	\begin{equation*}
	F(x) - F^{*} \geq \dfrac{\kappa}{q}\| x - \bar{x}\|^{q} \quad \forall x \in X.
    \vspace{-0.2cm}
	\end{equation*}
\end{assumption}

\noindent  Below, we prove that Moreau envelope, Forward-Backward envelope and Nesterov's smoothing satisfy also a $q$-growth like condition. \red{Let us introduce some notations that  will be used in the sequel. We define $\displaystyle L_{\max} = \max_{i=1:N} L_{i}(F_{\gamma})$.  We consider the following norms, for $\alpha\in \mathbb{R}$:
\vspace{-0.2cm}
\begin{equation*}
\|x\|^{2}_{\alpha} =  \sum_{i=1}^{N} (L_{i}(F_{\gamma}))^{\alpha} \|x^{(i)}\|^{2}_{(i)}.
\end{equation*}}

\subsection{Moreau and Forward-Backward envelopes} 

\begin{theorem}
	\label{theo:ME&FB}
	Let Assumption \ref{ass:qgrowth} hold for a given function $F$ and $X=\mathcal{L}_{F}(x_{0})$. For $q\in [1,2)$ assume additionally  that the level set of the smooth approximation $F_{\gamma}$ satisfies:  
    \vspace{-0.2cm}
	\begin{equation*}
	\|x-\bar{x}\| \leq R \quad \forall x \in \mathcal{L}_{F_{\gamma}}(x_{0}) \quad \red{\text{and some} \quad R>0.}
    \vspace{-0.2cm}
	\end{equation*}
	If $F$ is a proper closed convex function and $F_{\gamma}$ is the Moreau envelope, then it satisfies the following inequality: 
    \vspace{-0.2cm}
	\begin{equation}
    \label{eq:35}
	F_{\gamma}(x) - F^{*}_{\gamma} \geq \dfrac{\hat{\kappa}}{2}  \| x - \bar{x}\|^{2}_{1} \quad \forall x \in \mathcal{L}_{F_{\gamma}}(x_{0}), 
    \vspace{-0.2cm}
	\end{equation}
	with 
    \vspace{-0.2cm}
	\begin{equation}
    \label{eq:45}
	\hat \kappa =  \left\lbrace\begin{array}{ll} \dfrac{\kappa\gamma }{(\gamma \kappa +1) } \; 
	\text{ if } q=2 \\
	 \dfrac{\kappa^2\gamma^2}{(\kappa\gamma + R^{2-q})^2} \; \text{ if }  q \in [1,2).
	\end{array}\right. 
	\end{equation}

    \noindent If $F = f + \psi$, such that $f$ has Lipschitz gradient with constant $L>0$,  $\psi$ is a proper closed convex function and $F_{\gamma}$ is the Forward-Backward envelope with $\gamma L \in (0, 1)$, then it satisfies  \eqref{eq:35} with 
    \vspace{-0.2cm}
	\begin{equation}
    \label{eq:46}
	\hat \kappa =  \left\lbrace\begin{array}{ll} \dfrac{\kappa (1-\gamma L)}{(\gamma \kappa + 1 - \gamma L)L_{\max}(F_{\gamma}) } \; 
	\text{ if } q=2 \\
	\dfrac{\kappa^2\gamma (1-\gamma L)  }{(\kappa\gamma + R^{2-q} (1-\gamma L) )^2 L_{\max}(F_{\gamma}) } \; \text{ if }  q \in [1,2).
	\end{array}\right.   			
	\end{equation}
	
\end{theorem}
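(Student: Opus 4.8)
\noindent\emph{Proof plan.}
The plan is to transfer the growth of $F_\gamma$ to the growth of $F$ at the auxiliary point $\hat u$ supplied by property A.2, and then to recover a quadratic bound by an elementary estimate. Set $\hat u=\text{prox}_{\gamma F}(x)$ for the Moreau envelope and $\hat u=\text{prox}_{\gamma\psi}(x-\gamma\nabla f(x))$ for the Forward--Backward envelope. In the first case the definition \eqref{eq:ME} gives the exact identity $F_\gamma(x)=F(\hat u)+\tfrac{1}{2\gamma}\|x-\hat u\|^2$; in the second, the representation \eqref{eq:FBE2} together with the descent lemma for $f$ (i.e.\ \eqref{lip2} with $N=1$ and constant $L$) yields $F_\gamma(x)\ge F(\hat u)+\tfrac{1-\gamma L}{2\gamma}\|x-\hat u\|^2$, which is the point where $\gamma L\in(0,1)$ is used to keep the coefficient positive. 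Since $F_\gamma^*\le F^*$ by property A.1 (in fact an equality here), in both cases one obtains, with $c=\tfrac{1}{2\gamma}$ respectively $c=\tfrac{1-\gamma L}{2\gamma}$,
\begin{equation*}
F_\gamma(x)-F_\gamma^*\ \ge\ \bigl(F(\hat u)-F^*\bigr)+c\,\|x-\hat u\|^2 .
\end{equation*}

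Next I would verify the membership facts that make Assumption \ref{ass:qgrowth} applicable at $\hat u$. For $x\in\mathcal{L}_{F_\gamma}(x_0)$ one has $F(\hat u)\le F_\gamma(x)\le F_\gamma(x_0)\le F(x_0)$ by property A.2, hence $\hat u\in\mathcal{L}_F(x_0)=X$ and the $q$-growth inequality gives
\begin{equation*}
F(\hat u)-F^*\ \ge\ \frac{\kappa}{q}\,\bigl\|\hat u-[\hat u]_{X^*}\bigr\|^{q}.
\end{equation*}
Similarly $F_\gamma(\hat u)\le F(\hat u)\le F_\gamma(x_0)$ shows $\hat u\in\mathcal{L}_{F_\gamma}(x_0)$, so in the case $q\in[1,2)$ the standing boundedness hypothesis yields $\|\hat u-[\hat u]_{X^*}\|\le R$ (and likewise $\|x-\bar x\|\le R$). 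The triangle inequality then gives $\|x-\bar x\|\le\|x-\hat u\|+\|\hat u-[\hat u]_{X^*}\|$.

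Combining these three estimates is the core computation. For $q=2$ the right-hand side of the first display is $\tfrac{\kappa}{2}\|\hat u-[\hat u]_{X^*}\|^2+c\,\|x-\hat u\|^2$, and a weighted Cauchy--Schwarz inequality $(s+t)^2\le(\alpha^{-1}+\beta^{-1})(\alpha s^2+\beta t^2)$ with $\alpha,\beta$ tuned to $c$ and $\kappa$ converts it, via the triangle inequality, into a multiple of $\|x-\bar x\|^2$. For $q\in[1,2)$ I would first replace $\|\hat u-[\hat u]_{X^*}\|^{q}$ by $R^{\,q-2}\|\hat u-[\hat u]_{X^*}\|^{2}$ (valid since $\|\hat u-[\hat u]_{X^*}\|\le R$ and $q-2<0$) and use $q\le2$, then run the same weighted Cauchy--Schwarz step with $\kappa$ replaced by $\kappa R^{\,q-2}$. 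Either way one reaches $F_\gamma(x)-F_\gamma^*\ge C\,\|x-\bar x\|^2$ with an explicit $C$; passing to the norm $\|\cdot\|_1$ — using $L_i(F_\gamma)=\gamma^{-1}$, hence $\|\cdot\|_1^2=\gamma^{-1}\|\cdot\|^2$, for the Moreau envelope, and $\|\cdot\|_1^2\le L_{\max}(F_\gamma)\|\cdot\|^2$ for the Forward--Backward envelope — yields \eqref{eq:35}, and a careful accounting of the constants in the Cauchy--Schwarz step gives the values \eqref{eq:45} and \eqref{eq:46}.

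The main obstacle is precisely this last combination when $q\in[1,2)$: one must fuse a $q$-homogeneous growth term with a quadratic penalty into a single quadratic lower bound, and the only device available for this is the a priori boundedness of the sublevel set $\mathcal{L}_{F_\gamma}(x_0)$ through the radius $R$; obtaining the exact constants $\hat\kappa$ amounts to tracking how $R$, $\gamma$, $\kappa$ and, in the Forward--Backward case, $L$ and $L_{\max}(F_\gamma)$ propagate through that step. A secondary subtlety worth flagging is that the lower estimate of $F_\gamma(x)$ in terms of $F(\hat u)$ in the Forward--Backward case genuinely relies on $\gamma L<1$ so that $c>0$, consistent with the hypothesis of the theorem; for the Moreau envelope no such restriction is needed because the corresponding identity is exact.
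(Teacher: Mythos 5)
Your proposal is correct and shares the paper's skeleton — evaluate $F_\gamma(x)$ at the prox point $\hat u$, invoke the $q$-growth of $F$ there, and fuse the growth term with the quadratic proximity term — but the fusion step is done differently. The paper lower-bounds the sum by an explicit minimization $\min_u \frac{\kappa}{q}\|u-\bar z\|^q + c\|u-x\|^2$, solves or estimates the optimality conditions (for $q<2$ it writes the minimizer as a convex combination of $x$ and $\bar z$, bounds the mixing weight via $\|u-\bar z\|\le R$, and then discards one of the two resulting terms), whereas you use the triangle inequality $\|x-\bar x\|\le\|x-\hat u\|+\|\hat u-[\hat u]_{X^*}\|$ followed by a weighted Cauchy--Schwarz (Young) inequality, having first converted the $q$-power to a quadratic via $\|v\|^q\ge R^{q-2}\|v\|^2$. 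For $q=2$ the two routes produce identical constants (your optimization over the Young parameter reproduces $\frac{\kappa}{2(\gamma\kappa+1)}$ exactly, and likewise in the Forward--Backward case). For $q\in[1,2)$ your route actually yields a \emph{larger} modulus, of the form $\frac{\kappa\gamma}{\gamma\kappa+R^{2-q}}$ rather than the paper's squared ratio $\bigl(\frac{\kappa\gamma}{\kappa\gamma+R^{2-q}}\bigr)^2$, because the paper discards a nonnegative term after solving the optimality condition; since a larger modulus implies the stated inequality a fortiori, this still proves the theorem, but your closing claim that the Cauchy--Schwarz accounting "gives the values \eqref{eq:45} and \eqref{eq:46}" is literally exact only for $q=2$. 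Your membership argument ($\hat u\in\mathcal{L}_F(x_0)$ via A.2 and the descent of $F_\gamma$, and $\hat u\in\mathcal{L}_{F_\gamma}(x_0)$ via $F_\gamma\le F$, so that $\|\hat u-[\hat u]_{X^*}\|\le R$) is clean and, if anything, slightly more careful than the paper's, which applies the radius bound to $\|x-\bar z\|$ rather than to $\|x-\bar x\|$ as literally hypothesized.
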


\begin{proof}
Let us first prove the case of  Moreau envelope. Consider $x\in \mathcal{L}_{F_{\gamma}}(x_{0})$ and $z=\text{prox}_{\gamma F}(x)$. Note that $z\in \mathcal{L}_{F}(x_{0})$. Since $F$ is a proper closed convex function and satisfies the $q$-growth condition, then we have: 
    \vspace{-0.2cm}
	\begin{align*}
	F_{\gamma}(x) - F^{*}_{\gamma} =  F(z) + \dfrac{1}{2\gamma}\|z-x\|^2- F^{*}
    &\geq  \dfrac{\kappa}{q}\| z - \bar{z}\|^{q}  + \dfrac{1}{2\gamma}\|z-x\|^2 \\
	&\geq  \min_{u \in \mathbb{R}^{n}} \dfrac{\kappa}{q}\| u - \red{\bar{z}}\|^{q}  + \dfrac{1}{2\gamma}\|u-x\|^2.
    \vspace{-0.2cm}
	\end{align*}
	%\begin{itemize}
	%	\item 
		
	\noindent For $q=2$, from the optimality condition (considering $u$ the optimal point of the previous problem), we get:
        \vspace{-0.2cm}
		\begin{align*}
		\kappa(u-\red{\bar{z}}) + \dfrac{1}{\gamma}(u-x) = 0
		\iff
		u = \dfrac{1}{\kappa\gamma+1} x + \dfrac{\gamma\kappa}{\gamma\kappa +1}\red{\bar{z}},
        \vspace{-0.2cm}
		\end{align*}
		
		\noindent and we obtain 
        \vspace{-0.2cm}
		\begin{align*}
		\min_{u \in \mathbb{R}^{n}} \dfrac{\kappa}{2}\| u - \red{\bar{z}}\|^{2}  + \dfrac{1}{2\gamma}\|u-x\|^2 = \dfrac{\kappa}{2(\gamma \kappa +1)} \|x - \red{\bar{z}}\|^2  \geq   \dfrac{\kappa }{2(\gamma \kappa +1) L_{\max}(F_{\gamma})} \|x - \bar{x}\|^2_1.
        \vspace{-0.2cm}
		\end{align*}
		
		\noindent Since $L_{i}(F_{\gamma}) = \gamma^{-1}$, for $i=1,\cdots,N$, in the Moreau envelope, then $F_{\gamma}$ is 2-growth w.r.t. the norm $\|\cdot\|_1$, with constant $\hat\kappa = \dfrac{\kappa\gamma}{(\gamma \kappa +1) }$. 
		
		 \noindent  For $q\in [1,2)$,  considering $u = \red{\bar{z}}$, we get
        \vspace{-0.2cm}
		\begin{align*}
		  \min_{u \in \mathbb{R}^{n}} \dfrac{\kappa}{q}\| u - \red{\bar{z}}\|^{q}  + \dfrac{1}{2\gamma}\|u-x\|^2 = \dfrac{1}{2\gamma}\|\red{\bar{z}}-x\|^2  \geq  \dfrac{1}{2\gamma}\|\bar{x}-x\|^2 = \dfrac{1}{2}\|\bar{x}-x\|^2_{1}.
        \vspace{-0.2cm}
		\end{align*}
		
		\noindent On other hand,  considering $u \neq \red{\bar{z}}$, from the optimality conditions there exists $u$:
		\vspace{-0.2cm}
		\begin{equation*}
		\kappa \|u-\red{\bar{z}}\|^{q-2} \left(u - \red{\bar{z}} \right) + \dfrac{1}{\gamma}\left( u-x\right) =0.
        \vspace{-0.2cm}
		\end{equation*}
		
		\noindent Define $\alpha = \kappa \|u-\red{\bar{z}}\|^{q-2}$ and $\beta = \dfrac{1}{\gamma}$, then from equality above we get
        \vspace{-0.2cm}
		\begin{align*}
		\alpha (u - \red{\bar{z}} ) + \beta ( u-x ) =0 \Leftrightarrow (\alpha +\beta) u = \alpha\red{\bar{z}} + \beta x 
		\;\; \Leftrightarrow \;\;  u = \dfrac{\alpha}{\alpha +\beta} \red{\bar{z}} + \dfrac{\beta}{\alpha + \beta} x.
        \vspace{-0.2cm}
		\end{align*}
		
		\noindent This implies that 
        \vspace{-0.2cm}
		\begin{align}
		&\min_{u \in \mathbb{R}^{n}} \dfrac{\kappa}{q}\| u - \red{\bar{z}}\|^{q}  + \dfrac{1}{2\gamma}\|u-x\|^2 \nonumber \\ &=  \dfrac{\kappa}{q}\left\| \dfrac{\alpha}{\alpha +\beta} \red{\bar{z}} + \dfrac{\beta}{\alpha + \beta} x - \red{\bar{z}}\right\|^{q}  + \dfrac{1}{2\gamma}\left\|\dfrac{\alpha}{\alpha +\beta} \red{\bar{z}} + \dfrac{\beta}{\alpha + \beta} x-x\right\|^2 \nonumber \\
		&=   \dfrac{\kappa \beta^{q}}{q(\alpha+\beta)^{q}}\left\|  x - \red{\bar{z}}\right\|^{q}  + \dfrac{\alpha^2}{2\gamma(\alpha + \beta)^2}\left\| \red{\bar{z}} -x\right\|^2. \label{eq:9} \vspace{-0.2cm}	 
		\end{align}
		
		\noindent Since $u$ is the minimizer of the problem above, we get 
        \vspace{-0.2cm}
		\begin{equation*}
		\dfrac{\kappa}{q}\| u - \red{\bar{z}}\|^{q} \leq \dfrac{\kappa}{q}\| u - \red{\bar{z}}\|^{q}  + \dfrac{1}{2\gamma}\|u-x\|^2 \leq \dfrac{\kappa}{q}\| x - \red{\bar{z}}\|^{q} \Rightarrow \| u - \red{\bar{z}}\| \leq \|x - \red{\bar{z}}\|.
        \vspace{-0.2cm}
		\end{equation*}
		
		\noindent From the boundedness assumption of the level set we have that there exists  $R>0$ such that $\|y-\red{\bar{z}}\| \leq R$, for all $y \in L_{F_{\gamma}}(x_{0})$. Using the fact that $x \in L_{F_{\gamma}}(x_{0})$ and the inequality above, we obtain  $\|u - \red{\bar{z}}\| \leq R$. Moreover,
        \vspace{-0.2cm}
		\begin{equation*}
		\dfrac{\alpha}{\alpha + \beta } = \dfrac{\kappa}{\|u-\red{\bar{z}}\|^{2-q}} \cdot \dfrac{\gamma \|u - \red{\bar{z}}\|^{2-q}}{\gamma\kappa + \|u-\red{\bar{z}}\|^{2-q}} = \dfrac{\kappa\gamma}{\gamma\kappa + \|u-\red{\bar{z}}\|^{2-q}} \geq \dfrac{\kappa\gamma}{\kappa\gamma + R^{2-q}}. \vspace{-0.2cm}
		\end{equation*}
		
		\noindent Hence, we obtain
		\vspace{-0.2cm}
        \begin{align}
		\min_{u \in \mathbb{R}^{n}} \dfrac{\kappa}{q}\| u - \red{\bar{z}}\|^{q}  + \dfrac{1}{2\gamma}\|u-x\|^2 &\geq \dfrac{\kappa^2\gamma^2}{2\gamma(\kappa\gamma + R^{2-q})^2}\| \red{\bar{z}} -x\|^2 \\
		& \geq \dfrac{\kappa^2\gamma}{2(\kappa\gamma + R^{2-q})^2L_{\max}(F_{\gamma})} \| \bar{x} -x\|^2_1. \nonumber \vspace{-0.2cm}
		\end{align}
		Since $L_{i}(F_{\gamma}) = \gamma^{-1}$ in the Moreau Envelope, then \red{$F_{\gamma}$ satisfies 2-growth condition} w.r.t.  norm $\|\cdot\|_1$ with the constant $\hat \kappa =  \dfrac{\kappa^2\gamma^2}{(\kappa\gamma + R^{2-q})^2}$.
	%\end{itemize}

	\noindent Now, let us prove for Forward-Backward envelope. Consider $x\in \mathcal{L}_{F_{\gamma}}(x_{0})$ and $z=\text{prox}_{\gamma \psi}\left( x - \gamma \nabla f(x)\right)$. Note that, if $\gamma L \in (0,1)$, then $z\in \mathcal{L}_{F}(x_{0})$. Since $F$ satisfies functional $q$-growth,  $\psi$ is a proper closed function and $f$ has Lipschitz gradient, for $\gamma L \in \left( 0, 1\right)$, we have
    \vspace{-0.2cm}
	\begin{align*}
	F_{\gamma}(x) - &  F_{\gamma}(\bar{x}) =  f(x) + \langle \nabla f(x), z-x \rangle + \dfrac{1}{2\gamma} \|z-x\|^2 + \psi(z) - F^{*} \\
	&\geq \left( \dfrac{1}{2\gamma}- \dfrac{L}{2}\right)  \|z-x\|^2 + F(z) - F^{*} \geq  \dfrac{1}{2} \left( \dfrac{1}{\gamma}- L\right)  \|z-x\|^2 + \dfrac{\kappa}{q}\| z - \bar{z}\|^{q}\\
	&\geq \min_{u \in \mathbb{R}^{n}}  \dfrac{1}{2} \left( \dfrac{1}{\gamma}- L\right)  \|u-x\|^2 + \dfrac{\kappa}{q}\| u - \red{\bar{z}}\|^{q}. \vspace{-0.2cm}
	\end{align*}
	where in the first equality we use \eqref{eq:31}, \eqref{eq:32} and the definition of $F_{\gamma}$. Following a similar analysis as in the Moreau envelope case we can get the results. 
\end{proof}

\noindent Note that, in the Forward-Backward case we do not need to assume the function $f$ to be convex. 
\noindent Since in this paper the function $F$ is convex, we have that Assumption \ref{ass:qgrowth} is equivalent to Kurdyka-Lojasiewicz (KL) inequality with $p=\frac{q-1}{q}$ and $\mu > 0$ \cite{BolNgu:16}:
\vspace{-0.2cm}
\begin{equation}
    \label{KL}
    \mu(F(x) - F^{*})^{p} \leq \|\nabla F(x)\| \quad \forall x \in X.
    \vspace{-0.2cm}
\end{equation}

\noindent In \red{\cite[Theorem 3.4]{LiPon:18}}, if the original function $F$ satisfies KL with $p\in (0,1/2]$, then the Moreau Envelope  satisfies KL with $p=1/2$, i.e., \red{Moreau envelope satisfies $2$-growth condition}. We have obtained  similar results in Theorem \ref{theo:ME&FB}. Further, considering $F=f + \psi$, with $f$ a twice continuously differentiable function with Lipschitz gradient and  $\psi$ proper closed convex function, \red{\cite[Theorem 3.2]{LiPon:17}} and \red{\cite[Theorem 5.2]{YuLiPon:19}}  prove that if $F$ satisfies KL with $p\in (0,1/2]$,  then
the Forward-Backward is a KL function with exponent $1/2$, i.e., \red{Forward-Backward envelope satisfies $2$-growth property}, which confirms our results from Theorem \ref{theo:ME&FB}. However, in Theorem \ref{theo:ME&FB} we give an explicit expression for the constant $\kappa$ and we provide a proof that does not need the assumption of twice differentiability for $f$ in the Forward-Backward case.

\subsection{Nesterov's smoothing}
For the next theorem, for some $\gamma>0$, consider the following level set of a function $F$:
\vspace{-0.2cm}
\begin{equation}
\label{eq:41}
\mathcal{L}_{F}(x_{0},\gamma) = \{x:F(x)\leq F(x_{0}) + \gamma \bar{D}\}.
\vspace{-0.2cm}
\end{equation}
and the following (nonconvex) set
\vspace{-0.2cm}
\begin{align}
    \hat{X} = \mathcal{L}_{F_{\gamma}}(x_{0}) \cap \{ x \in \mathbb{R}^{n}: F_{\gamma}(x) - F^{*} \geq \gamma \bar{D}   \}. \label{eq:42}
    \vspace{-0.2cm}
\end{align}

\begin{theorem}
	\label{theo:NS}
	Let assumption \ref{ass:qgrowth} hold for a given function $F$ and  $X=\mathcal{L}_{F}(x_{0},\gamma)$. Considering $F$ defined as in \eqref{eq:NSprob} and $F_{\gamma}$ the Nesterov's smoothing.  Then, the following inequality holds
    \vspace{-0.2cm}
	\begin{equation*}
    F_{\gamma}(x) - F^{*}_{\gamma} \geq \hat{\kappa}\|x - \bar{x}\|^{q}_1 - \gamma \bar{D} \quad \forall x \in \mathcal{L}_{F_{\gamma}}(x_{0})\quad \text{with} \quad \hat{\kappa} = \dfrac{\kappa}{q L_{\max}^{\frac{q}{2}}}. \vspace{-0.2cm}
	\end{equation*}

    \noindent Moreover, $F_{\gamma}$ satisfies
    \vspace{-0.2cm}
	\begin{equation}
    \label{eq:47}
	 F_{\gamma}(x) - F^{*}_{\gamma} \geq \hat{\kappa}\|x - \bar{x}\|^{q}_1 \quad \forall x \in \hat{X}, \quad \text{with} \quad \hat{\kappa} = \dfrac{\kappa}{2 q L_{\max}^{\frac{q}{2}}}. \vspace{-0.2cm}
	\end{equation}
    
    \noindent Additionally, assume that for  the level set of $F_{\gamma}$,  $\mathcal{L}_{F_{\gamma}}(x_{0})$, we have
    \vspace{-0.2cm}
	\begin{equation}
	\|x-\bar{x}\| \leq R \quad \forall x \in \mathcal{L}_{F_{\gamma}}(x_{0}),
    \label{eq:38} \vspace{-0.2cm}
	\end{equation}
	then  $F_{\gamma}$ also satisfies
    \vspace{-0.2cm}
	\begin{equation}
    \label{eq:48}
	 F_{\gamma}(x) - F^{*}_{\gamma} \geq \dfrac{\hat{\kappa}}{2}\|x - \bar{x}\|^{2}_{1} \quad \forall x \in \hat{X}, \quad \text{with} \quad \hat{\kappa} = \dfrac{\kappa}{q L_{\max} R^{2-q}}. \vspace{-0.2cm}
	\end{equation}
\end{theorem}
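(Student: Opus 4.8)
The plan is to transfer the functional $q$-growth of $F$ to the smooth approximation $F_{\gamma}$ using only the sandwich bounds $F_{\gamma}(x) \le F(x) \le F_{\gamma}(x) + \gamma \bar{D}$ and $F_{\gamma}^{*} \le F^{*}$ established in items A.1--A.2 for Nesterov's smoothing, together with the elementary norm comparison $\|y\|^{2}_{1} = \sum_{i=1}^{N} L_{i}(F_{\gamma}) \|y^{(i)}\|^{2}_{(i)} \le L_{\max} \|y\|^{2}$, i.e. $\|y\| \ge L_{\max}^{-1/2} \|y\|_{1}$.

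First I would check the level-set inclusion $\mathcal{L}_{F_{\gamma}}(x_{0}) \subseteq \mathcal{L}_{F}(x_{0},\gamma) = X$: if $F_{\gamma}(x) \le F_{\gamma}(x_{0})$ then $F(x) \le F_{\gamma}(x) + \gamma \bar{D} \le F_{\gamma}(x_{0}) + \gamma \bar{D} \le F(x_{0}) + \gamma \bar{D}$. Hence Assumption \ref{ass:qgrowth} applies and gives $F(x) - F^{*} \ge \frac{\kappa}{q}\|x - \bar{x}\|^{q}$ for every $x \in \mathcal{L}_{F_{\gamma}}(x_{0})$. Chaining $F_{\gamma}(x) - F_{\gamma}^{*} \ge F_{\gamma}(x) - F^{*} \ge F(x) - \gamma \bar{D} - F^{*}$ with this growth bound and the norm comparison yields at once $F_{\gamma}(x) - F_{\gamma}^{*} \ge \frac{\kappa}{q L_{\max}^{q/2}}\|x - \bar{x}\|^{q}_{1} - \gamma \bar{D}$, which is the first claimed inequality.

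For \eqref{eq:47} I restrict to $x \in \hat{X}$, where by definition $F_{\gamma}(x) - F^{*} \ge \gamma \bar{D}$, hence also $F_{\gamma}(x) - F_{\gamma}^{*} \ge \gamma \bar{D}$ because $F_{\gamma}^{*} \le F^{*}$. Substituting this lower bound for the $-\gamma \bar{D}$ term in the inequality just obtained gives $F_{\gamma}(x) - F_{\gamma}^{*} \ge \frac{\kappa}{q L_{\max}^{q/2}}\|x - \bar{x}\|^{q}_{1} - (F_{\gamma}(x) - F_{\gamma}^{*})$, that is $2(F_{\gamma}(x) - F_{\gamma}^{*}) \ge \frac{\kappa}{q L_{\max}^{q/2}}\|x - \bar{x}\|^{q}_{1}$, which is precisely \eqref{eq:47} with $\hat{\kappa} = \kappa/(2 q L_{\max}^{q/2})$.

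Finally, for \eqref{eq:48} I bring in the boundedness hypothesis \eqref{eq:38}: for $x \in \hat{X} \subseteq \mathcal{L}_{F_{\gamma}}(x_{0})$ the norm comparison gives $\|x - \bar{x}\|_{1} \le L_{\max}^{1/2} R$, and since $q - 2 \le 0$ the map $t \mapsto t^{q-2}$ is nonincreasing, so $\|x - \bar{x}\|^{q}_{1} = \|x - \bar{x}\|^{2}_{1}\|x - \bar{x}\|^{q-2}_{1} \ge (L_{\max}^{1/2} R)^{q-2}\|x - \bar{x}\|^{2}_{1}$. Plugging this into \eqref{eq:47} and collecting the powers of $L_{\max}$ (namely $L_{\max}^{-q/2} L_{\max}^{(q-2)/2} = L_{\max}^{-1}$) produces \eqref{eq:48}. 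I do not anticipate a genuine obstacle here; the only points that require care are verifying the two sublevel-set inclusions and keeping correct track of the exponents of $L_{\max}$ and $R$ in the constants $\hat{\kappa}$.
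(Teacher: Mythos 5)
Your proposal is correct and follows essentially the same route as the paper: the sandwich bound $F(x)\le F_{\gamma}(x)+\gamma\bar D$ combined with the $q$-growth of $F$, the norm comparison through $L_{\max}$, the definition of $\hat X$ to absorb the $\gamma\bar D$ term at the cost of a factor $2$, and the radius $R$ to pass from exponent $q$ to exponent $2$ (you apply $R$ in the $\|\cdot\|_1$ norm while the paper applies it in the Euclidean norm first, but both land on the same constant $\hat\kappa$). Your explicit verification of the inclusion $\mathcal{L}_{F_{\gamma}}(x_0)\subseteq\mathcal{L}_{F}(x_0,\gamma)$ is a small detail the paper leaves implicit.
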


\begin{proof} 
	Assume the function $F$ satisfy the $q$-growth condition. Since 
    \vspace{-0.2cm}
	\begin{equation*}
	F(x) \leq F_{\gamma}(x) + \gamma \bar D,
	\vspace{-0.2cm}
    \end{equation*} 
	then
    \vspace{-0.2cm}
	\begin{align}
    F_{\gamma}(x) - F^{*} + \gamma \bar{D}  
	&\geq \dfrac{\kappa }{q}\|x - \bar{x}\|^{q} \geq \dfrac{\kappa L^{\frac{q}{2}}_{\max}}{qL^{\frac{q}{2}}_{\max}} \left(\sum_{i=1}^{N} \|x^{(i)} - \bar{x}^{(i)}\|^2 \right)^{\frac{q}{2}} \nonumber \\
	&\geq \dfrac{\kappa}{qL^{\frac{q}{2}}_{\max}} \left(\sum_{i=1}^{N} L_{\max} \|x^{(i)} - \bar{x}^{(i)}\|^2 \right)^{\frac{q}{2}} \geq \dfrac{\kappa}{qL^{\frac{q}{2}}_{\max}} \|x- \bar{x}\|^q_{1}. \label{eq:39} \vspace{-0.2cm}
	\end{align}
	Since $F^{*}_{\gamma} \leq F^{*}$, the first statement follows. From the last inequality, for $x\in \hat{X}$:  
    \vspace{-0.2cm}
    \begin{align}
    \label{eq:43}
    2\left(F_{\gamma}(x) - F^{*}\right) \geq     F_{\gamma}(x) - F^{*} + \gamma \bar{D} \geq \dfrac{\kappa}{qL^{\frac{q}{2}}_{\max}} \|x- \bar{x}\|^q_{1}. \vspace{-0.2cm} 
\end{align}

\noindent Using the fact that $F^{*}_{\gamma} \leq F^{*}$, the second statement also follows. Moreover, using the additional assumption in \eqref{eq:38} and the first inequality in \eqref{eq:39}, for $x\in \hat{X}$, we have 
\vspace{-0.2cm}
\begin{align}
    \label{eq:44}
  2\left(F_{\gamma}(x) - F^{*}\right) \geq \dfrac{\kappa }{q}\|x - \bar{x}\|^{q} \geq    \dfrac{\kappa }{q R^{2-q}}\|x - \bar{x}\|^{2} \geq \dfrac{\kappa }{q R^{2-q} L_{\max} }\|x - \bar{x}\|^{2}_{1}. \vspace{-0.2cm}
\end{align}

\noindent From $F^{*}_{\gamma} \leq F^{*}$, the  statement follows 
\end{proof}

\noindent An immediate consequence of  Theorems \ref{theo:ME&FB} and \ref{theo:NS} is the following corollary.

\begin{corollary}
\label{cor:1}
    Given a function $F$ and $\bar{X} \subseteq \mathbb{R}^{n}$. Consider the following inequality for a smooth approximation of $F$, $F_{\gamma}$, and some $\bar{\kappa} > 0$:
    \vspace{-0.2cm}
    \begin{equation}	
    \label{eq:40}
    F_{\gamma}(x) - F^{*} \geq \dfrac{\bar{\kappa}}{2^{\frac{\bar{q}}{2}}} \| x - \bar{x}\|^{\bar{q}}_1  \quad \forall x \in \bar{X} \quad \text{and} \quad \bar{x} = [x]_{X^{*}}. \vspace{-0.2cm}
	\end{equation}

   \noindent Then, we have: \\
    (i) If  $F_{\gamma}$ is the Moreau or Forward-Backward envelopes satisfying the assumptions in Theorem \ref{theo:ME&FB}, then $F_{\gamma}$ satisfies \eqref{eq:40} with $\bar{q}=2$, $\bar{X} = \mathcal{L}_{F_{\gamma}}(x_0)$. Moreover $\bar\kappa=\hat{\kappa}$, with $\hat{\kappa}$ defined in \eqref{eq:45} and \eqref{eq:46} 
    for Moreau and Forward-Backward envelopes, respectively. \\
    (ii) Let Assumption \ref{ass:qgrowth} hold with $X=\mathcal{L}_{F}(x_{0},\gamma)$, with $\mathcal{L}_{F}(x_{0},\gamma)$ defined in \eqref{eq:41}. Moreover, consider $F$ and $\hat{X}$ defined as in \eqref{eq:NSprob} and \eqref{eq:42}, respectively.  If $F_{\gamma}$ is the Nesterov's smoothing, then 
    $F_{\gamma}$ satisfies \eqref{eq:40} with $\bar{q}=q$, $\bar{X} = \hat{X}$ and $\bar \kappa = 2^{\frac{\bar{q}}{2}} \hat{\kappa}$, where $\hat{\kappa}$ is defined in \eqref{eq:47}. Additionally if $F_{\gamma}$ satisfies \eqref{eq:38}, then $F_{\gamma}$ satisfies \eqref{eq:40} with $\bar{q}=2$,  $\bar{X} = \hat{X}$ and $\bar\kappa=\hat{\kappa}$, where $\hat{\kappa}$ is defined in \eqref{eq:48}.
\end{corollary}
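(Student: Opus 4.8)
The plan is to obtain the corollary purely as a bookkeeping consequence of Theorems \ref{theo:ME&FB} and \ref{theo:NS}: we rewrite their conclusions in the normalized form \eqref{eq:40} and replace $F^*_{\gamma}$ by $F^*$ on the left‑hand side using the relation between the two optimal values valid for each smoothing technique. No new estimate is required; the only care needed is in matching the constants and the exponent $\bar q$.

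For part (i), I would first invoke the identities $F^*_{\gamma} = F^*$, which hold for the Moreau envelope by \eqref{eq:30} and for the Forward--Backward envelope (with $\gamma L \in (0,1)$) by \eqref{eq:31}. Hence the conclusion \eqref{eq:35} of Theorem \ref{theo:ME&FB} reads $F_{\gamma}(x) - F^* \geq \tfrac{\hat\kappa}{2}\|x-\bar x\|^{2}_{1}$ for all $x \in \mathcal{L}_{F_{\gamma}}(x_0)$, with $\hat\kappa$ given by \eqref{eq:45} for the Moreau envelope and by \eqref{eq:46} for the Forward--Backward envelope (in each case the value depends on whether $q=2$ or $q\in[1,2)$, but the resulting growth exponent is always $2$). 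Since $2^{\bar q/2} = 2$ when $\bar q = 2$, this is exactly \eqref{eq:40} with $\bar q = 2$, $\bar X = \mathcal{L}_{F_{\gamma}}(x_0)$ and $\bar\kappa = \hat\kappa$.

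For part (ii), the subtlety is that for Nesterov's smoothing we only have $F^*_{\gamma} \le F^*$ rather than equality, so the final $F^*_\gamma$‑statement of Theorem \ref{theo:NS} points in the wrong direction. Instead I would use the intermediate inequality established inside the proof of that theorem: from \eqref{eq:43} one has $2\bigl(F_{\gamma}(x) - F^*\bigr) \ge \tfrac{\kappa}{qL_{\max}^{q/2}}\|x-\bar x\|^{q}_{1}$ for $x\in\hat{X}$, that is $F_{\gamma}(x)-F^* \ge \hat\kappa\|x-\bar x\|^{q}_{1}$ with $\hat\kappa$ as in \eqref{eq:47}. Writing $\hat\kappa = \bar\kappa / 2^{q/2}$ gives \eqref{eq:40} with $\bar q = q$, $\bar X = \hat{X}$ and $\bar\kappa = 2^{\bar q/2}\hat\kappa$. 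If in addition the boundedness condition \eqref{eq:38} is assumed, then inequality \eqref{eq:44} in the same proof yields $2\bigl(F_{\gamma}(x)-F^*\bigr) \ge \tfrac{\kappa}{qR^{2-q}L_{\max}}\|x-\bar x\|^{2}_{1}$ on $\hat{X}$, i.e. $F_{\gamma}(x)-F^* \ge \tfrac{\hat\kappa}{2}\|x-\bar x\|^{2}_{1}$ with $\hat\kappa$ as in \eqref{eq:48}; since $2^{\bar q/2}=2$ for $\bar q = 2$, this is \eqref{eq:40} with $\bar q = 2$, $\bar X = \hat{X}$ and $\bar\kappa = \hat\kappa$.

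There is essentially no analytical obstacle here; the statement is a repackaging of the two theorems. The one place to be attentive is the normalization factor $2^{\bar q/2}$ in \eqref{eq:40}, which is chosen precisely so that the $q=2$ conclusions (which carry an explicit $1/2$) and the general‑$q$ conclusion of Theorem \ref{theo:NS} (whose $1/q$ is absorbed into $\hat\kappa$) can all be expressed with the same $\bar\kappa$ convention; keeping the bookkeeping of $\hat\kappa$ versus $\bar\kappa$ consistent across the four sub‑cases is the only thing that requires attention.
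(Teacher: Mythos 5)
Your proposal is correct and follows essentially the same route as the paper: part (i) uses the identities $F^*_{\gamma}=F^*$ from \eqref{eq:30}--\eqref{eq:32} to restate Theorem \ref{theo:ME&FB} in the form \eqref{eq:40}, and part (ii) reads off the intermediate inequalities \eqref{eq:43} and \eqref{eq:44} from the proof of Theorem \ref{theo:NS}, which is exactly what the paper does. Your explicit remark that for Nesterov's smoothing only $F^*_{\gamma}\le F^*$ holds, so one must use the $F^*$-form inequalities rather than the theorem's final $F^*_{\gamma}$-statement, is a correct and slightly more careful articulation of the same argument.
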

\begin{proof}
(i) In the Moreau and Forward-Backward envelopes,
from \eqref{eq:30}, \eqref{eq:31} and \eqref{eq:32}, we have $F_{\gamma}(\bar{x}) = F_{\gamma}^{*} = F^{*} = F(\bar{x})$. Hence, the statements follows from Theorem \ref{theo:ME&FB}.\\
(ii) The statement follows from \eqref{eq:43} and \eqref{eq:44}.
\end{proof}

%\begin{remark}
\noindent It is still an open question if similar results can be derived for  the Douglas-Rachford envelope. Note that for the Nesterov's smoothing we have $L_{i}(F_{\gamma}) = \mathcal{O}\left(\gamma^{-1}\right)$, see \eqref{eq:49}, hence $\bar{\kappa} = \mathcal{O}\left( \gamma^{\frac{\bar{q}}{2}} \right)$.  Based on the previous corollary,
we impose the following assumption for a smooth approximation.

\medskip 

\begin{assumption}
	\label{ass2}
	Given a function $F$ and  $\bar{X} \subseteq  \mathbb{R}^{n}$. Assume that the smooth approximation of $F$, $F_{\gamma}$, satisfies the following inequality for some $\bar{\kappa}> 0$ and $\bar{q}\in [1,2]$  	
    \vspace{-0.2cm}
    \begin{equation*}	F_{\gamma}(x) - F^{*} \geq \dfrac{\bar{\kappa}}{2^{\frac{\bar q}{2}}} \| x - \bar{x}\|^{\bar q}_1  \quad \forall x \in \bar{X} \quad \text{and} \quad \bar{x} = [x]_{X^{*}}.
    \vspace{-0.2cm}
	\end{equation*}
\end{assumption}

%%%%%%%%%%%%%%%%%%%%%%%%%%
%%%%%%%%%%%%%%%%%%%%%%%%%

\section{(Accelerated) coordinate descent methods} 
\label{sec:CDmethods}
In this section we analyze the convergence of (accelerated) coordinate descent methods for minimizing the smooth approximation and consequently the original objective function in the convex and $q$-growth cases.

%%%%%%%%%%%%%%%%%%%%

\subsection{Coordinate descent: convex case}
In the framework of smoothing as given in Assumption \ref{ass1}, the smooth approximation $F_{\gamma}$ has coordinate-wise gradient. Therefore,  we can use (accelerated) coordinate descent algorithms \cite{FerQu:20,Nes:10,NesSti:17} for solving the smooth approximation of problem \eqref{eq:prob}:
\vspace{-0.2cm}
\begin{align}
\label{eq:prob2}
F_{\gamma}^* = \min_{x \in \mathbb{R}^n} F_{\gamma}(x). \vspace{-0.2cm}
\end{align}
In the next algorithm, we \red{consider a} random counter $\mathcal{R}_{\alpha}$, with $\alpha \in \mathbb{R}$, which generates an integer number $i \in \{1, . . . , N\}$ with probability \cite{Nes:10}:
\vspace{-0.2cm}
\begin{equation}
p^{(i)}_{\alpha} = L_{i}^{\alpha} (F_{\gamma}) \cdot \left[ \sum_{i=1}^{N} L_{j}^{\alpha}(F_{\gamma})\right]^{-1}. \label{eq:probability} \vspace{-0.2cm}
\end{equation}
Thus, \red{ $\mathcal{R}_{\alpha}$ is a discrete
random variable over $\{1, ..., N\}$ and its distribution is specified by probabilities
as in \eqref{eq:probability}. Note that $\mathcal{R}_{0}$ generates a uniform distribution}. Let us recall the algorithm proposed in \cite{Nes:10}.

\begin{center}
	\noindent\fbox{%
		\parbox{7.5 cm}{%
			\textbf{Algorithm 1}:\\
			Given a starting point $x_{0} \in \text{dom} F_{\gamma}$.
			For $k\geq 0$ do:
			\begin{enumerate}
				\item \red{Sample $i_k$ from $\mathcal{R}_{\alpha}$.}
				\item  Compute $h^{(i_{k})} (x_{k})= -U_{i_{k}}^{T}\nabla F_{\gamma}(x_{k})$. 
               \item Update: \\
				$x_{k+1} = x_{k} + \dfrac{1}{L_{i_{k}}(F_{\gamma})} U_{i_{k}} h^{(i_{k})} (x_{k})$.
			\end{enumerate}
	}}
\end{center}
\medskip
\noindent From inequality (2.4) in \cite{Nes:10}, we have for all $k \geq 0$ the descent:
\vspace{-0.2cm}
\begin{align}
    \label{eq:dec}
    F_{\gamma}(x_{k+1}) \leq F_{\gamma}(x_{k}). \vspace{-0.2cm}
\end{align}

\noindent Denote the set of optimal solutions of \eqref{eq:prob2} by $\bar X^{*}_{\gamma}$ and let  $x_{*}$ be an element of this set. Define also: 
\vspace{-0.2cm}
\begin{equation*}
R_{\alpha} = \max_{k \geq 0} \min_{x_{*} \in \bar{X}^{*}_{\gamma}} \|x_{k} - x_{*}\|_{\alpha}< \infty
\text{ and }
S_{\alpha}(\gamma) = \sum_{i=1}^{N}L_{i}^{\alpha} (F_{\gamma}). \vspace{-0.2cm} 
\end{equation*}

\begin{theorem}
	\label{theo:1}
	Let Assumption \ref{ass1} hold. Then, the iterates of  Algorithm 1 satisfy
    \vspace{-0.2cm}
	\begin{equation*}
	\mathbb{E}[F_{\gamma}(x_{k})] - F_{\gamma}^{*} \leq \dfrac{2 S_{\alpha}(\gamma) R_{1-\alpha}^2 }{k+4}. \vspace{-0.2cm}
	\end{equation*} 
	
\end{theorem}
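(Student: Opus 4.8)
The plan is to carry over to the smooth surrogate $F_{\gamma}$ the analysis of random coordinate descent from \cite{Nes:10}: in fact Theorem \ref{theo:1} is, up to notation, \cite[Theorem 1]{Nes:10} applied to $F_{\gamma}$, whose convexity (A.4 in Assumption \ref{ass1}) and block coordinate-wise Lipschitz gradient (A.3) supply exactly the hypotheses needed. I would organize everything around a one-step contraction of the expected optimality gap $r_{k} := \mathbb{E}[F_{\gamma}(x_{k})] - F_{\gamma}^{*}$. The starting point is the deterministic per-step decrease: inserting the update of Algorithm 1, namely $x_{k+1} = x_{k} + L_{i_{k}}^{-1}(F_{\gamma}) U_{i_{k}} h^{(i_{k})}(x_{k})$ with $h^{(i_{k})}(x_{k}) = -U_{i_{k}}^{T}\nabla F_{\gamma}(x_{k})$, into inequality \eqref{lip2} written for $F_{\gamma}$ yields
\[
F_{\gamma}(x_{k+1}) \leq F_{\gamma}(x_{k}) - \frac{1}{2 L_{i_{k}}(F_{\gamma})}\,\| U_{i_{k}}^{T}\nabla F_{\gamma}(x_{k}) \|_{(i_{k})}^{2},
\]
which is precisely inequality (2.4) of \cite{Nes:10}.

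Next I would take the conditional expectation with respect to the index $i_{k}$ drawn from $\mathcal{R}_{\alpha}$. Using the sampling probabilities \eqref{eq:probability}, the factors $L_{i_{k}}^{\alpha}(F_{\gamma})$ coming from the distribution cancel against the $L_{i_{k}}^{-1}(F_{\gamma})$ in the decrease, leaving
\begin{align*}
\mathbb{E}_{i_{k}}[F_{\gamma}(x_{k+1})] &\leq F_{\gamma}(x_{k}) - \frac{1}{2 S_{\alpha}(\gamma)} \sum_{i=1}^{N} L_{i}^{\alpha-1}(F_{\gamma})\, \| U_{i}^{T}\nabla F_{\gamma}(x_{k}) \|_{(i)}^{2} \\
&= F_{\gamma}(x_{k}) - \frac{1}{2 S_{\alpha}(\gamma)} \big( \| \nabla F_{\gamma}(x_{k}) \|_{1-\alpha}^{\ast} \big)^{2},
\end{align*}
where $\| \cdot \|_{1-\alpha}^{\ast}$ is the norm dual to $\| \cdot \|_{1-\alpha}$, so that $( \| g \|_{1-\alpha}^{\ast} )^{2} = \sum_{i=1}^{N} L_{i}^{-(1-\alpha)}(F_{\gamma})\, \| g^{(i)} \|_{(i)}^{2}$. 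Keeping the weighted norms $\| \cdot \|_{\alpha}$, $\| \cdot \|_{1-\alpha}$ and their duals consistent with the sampling exponent $\alpha$ is essentially the only bookkeeping in the argument.

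Then I would invoke convexity (A.4): for any $x_{*} \in \bar{X}^{*}_{\gamma}$ we have $F_{\gamma}(x_{k}) - F_{\gamma}^{*} \leq \langle \nabla F_{\gamma}(x_{k}), x_{k} - x_{*} \rangle \leq \| \nabla F_{\gamma}(x_{k}) \|_{1-\alpha}^{\ast}\, \| x_{k} - x_{*} \|_{1-\alpha}$ by the generalized Cauchy--Schwarz inequality; minimizing over $x_{*} \in \bar{X}^{*}_{\gamma}$ and using the definition of $R_{1-\alpha}$ gives $\| \nabla F_{\gamma}(x_{k}) \|_{1-\alpha}^{\ast} \geq ( F_{\gamma}(x_{k}) - F_{\gamma}^{*} ) / R_{1-\alpha}$. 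Substituting this, subtracting $F_{\gamma}^{*}$, taking the total expectation and applying Jensen's inequality (so $\mathbb{E}[( F_{\gamma}(x_{k}) - F_{\gamma}^{*} )^{2}] \geq r_{k}^{2}$) yields the scalar recursion
\[
r_{k+1} \leq r_{k} - \frac{r_{k}^{2}}{2 S_{\alpha}(\gamma) R_{1-\alpha}^{2}}.
\]
Dividing by $r_{k} r_{k+1}$ and using $r_{k+1} \leq r_{k}$ (which also follows from \eqref{eq:dec}) gives $r_{k+1}^{-1} \geq r_{k}^{-1} + ( 2 S_{\alpha}(\gamma) R_{1-\alpha}^{2} )^{-1}$, hence $r_{k}^{-1} \geq r_{0}^{-1} + k ( 2 S_{\alpha}(\gamma) R_{1-\alpha}^{2} )^{-1}$; combined with the initial bound $r_{0} = F_{\gamma}(x_{0}) - F_{\gamma}^{*} \leq \tfrac{1}{2} S_{\alpha}(\gamma) R_{1-\alpha}^{2}$ this rearranges exactly to $r_{k} \leq 2 S_{\alpha}(\gamma) R_{1-\alpha}^{2} / (k+4)$.

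The descent inequality and the expectation step are routine. The delicate point is the last one, obtaining the precise denominator $k+4$: it hinges on the bound $r_{0} \leq S_{\alpha}(\gamma) R_{1-\alpha}^{2} / 2$, which I would extract from A.3 (global coordinate-wise Lipschitz smoothness of $F_{\gamma}$ forces the initial gap to be dominated by the squared distance from $x_{0}$ to $\bar{X}^{*}_{\gamma}$ in the norm $\| \cdot \|_{1-\alpha}$). Alternatively and more cheaply, one can bypass this entirely and simply cite \cite[Theorem 1]{Nes:10}, since every assumption that result imposes on the objective is already contained in Assumption \ref{ass1}; the main obstacle, such as it is, is purely notational --- tracking the interplay of the exponents $\alpha$ and $1-\alpha$ through the probabilities, the Lipschitz constants and the dual norms.
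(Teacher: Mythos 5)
Your proposal is correct and coincides with the paper's approach: the paper proves Theorem \ref{theo:1} simply by invoking \cite[Theorem 1]{Nes:10} for $F_{\gamma}$, using exactly the properties A.3--A.4 of Assumption \ref{ass1} that you identify, and your detailed derivation is a faithful reconstruction of that cited proof (descent step, conditional expectation with the $\mathcal{R}_{\alpha}$ probabilities, convexity plus the dual-norm Cauchy--Schwarz bound via $R_{1-\alpha}$, and the standard recursion with the initial bound giving the $k+4$ denominator). No gaps.
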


\begin{proof}
	The result follows using Theorem 1 in \cite{Nes:10} and Assumption \ref{ass1} [A3-A4]. 
\end{proof}

\noindent Next, we can also provide the convergence rate for the original function in  problem \eqref{eq:prob}. 

\begin{theorem}
	\label{theo:2}
	Let Assumption \ref{ass1} hold. Then, the iterates of Algorithm 1 satisfy
    \vspace{-0.2cm}
	\begin{equation}
	\mathbb{E}[F(\mathcal{B}(x_{k}))] - F^{*} \leq \dfrac{2 S_{\alpha}(\gamma) R_{1-\alpha}^2 }{k+4}+ \gamma D. \label{eq:ConvRate} \vspace{-0.2cm}
	\end{equation} 
\end{theorem}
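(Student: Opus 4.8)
The plan is to chain together the convergence rate for the smooth surrogate (Theorem \ref{theo:1}) with the two-sided sandwich inequality of Assumption \ref{ass1}. First I would apply Assumption \ref{ass1}~[A.2] at the point $x_k$, which gives the lower-side bound $F(\mathcal{B}(x_k)) - \gamma D \leq F_\gamma(x_k)$, i.e.\ $F(\mathcal{B}(x_k)) \leq F_\gamma(x_k) + \gamma D$. Taking expectation over the random choices $i_0,\dots,i_{k-1}$ and using linearity, this becomes $\mathbb{E}[F(\mathcal{B}(x_k))] \leq \mathbb{E}[F_\gamma(x_k)] + \gamma D$.

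Next I would invoke Theorem \ref{theo:1}, which is legitimate since Assumption \ref{ass1}~[A.3--A.4] holds, to write $\mathbb{E}[F_\gamma(x_k)] \leq F_\gamma^{*} + \dfrac{2 S_\alpha(\gamma) R_{1-\alpha}^2}{k+4}$. Substituting into the previous display yields
\begin{equation*}
\mathbb{E}[F(\mathcal{B}(x_k))] - F^{*} \leq F_\gamma^{*} - F^{*} + \dfrac{2 S_\alpha(\gamma) R_{1-\alpha}^2}{k+4} + \gamma D.
\end{equation*}
Finally I would use Assumption \ref{ass1}~[A.1], namely $F_\gamma^{*} \leq F^{*}$, so that $F_\gamma^{*} - F^{*} \leq 0$ can be dropped, producing exactly the claimed bound \eqref{eq:ConvRate}.

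There is no real obstacle here: the statement is a direct corollary obtained by composing Theorem \ref{theo:1} with the defining properties of the smoothing framework. The only point requiring a word of care is that the expectation in Theorem \ref{theo:1} and the expectation appearing in the target inequality are taken with respect to the same filtration generated by the samples $i_k \sim \mathcal{R}_\alpha$, so that the substitution step is valid without any extra independence argument; this is immediate from the construction of Algorithm 1. One may optionally remark that the bound exhibits the expected trade-off in $\gamma$: the term $2 S_\alpha(\gamma) R_{1-\alpha}^2/(k+4)$ typically grows as $\gamma\to 0$ (since $L_i(F_\gamma)$ blows up), while $\gamma D$ vanishes, so $\gamma$ should be tuned as a function of the target accuracy $\epsilon$, which is precisely what is done when deriving the rates in Table \ref{table:rates}.
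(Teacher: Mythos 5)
Your proposal is correct and follows exactly the same route as the paper's proof: apply A.2 to get $\mathbb{E}[F(\mathcal{B}(x_k))] \leq \mathbb{E}[F_\gamma(x_k)] + \gamma D$, invoke Theorem \ref{theo:1}, and use A.1 ($F_\gamma^* \leq F^*$) to drop the nonpositive term. The paper states this in two lines; you have merely written out the same chain of inequalities more explicitly.
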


\begin{proof}
	By Assumption \ref{ass1}[A.1-A.2] we have $F^{*} \geq F_{\gamma}^{*}$ and $\mathbb{E}[F(\mathcal{B}(x_{k})] \leq \mathbb{E}[F_{\gamma}(x_{k})] + \gamma D$. Using  Theorem \ref{theo:1},  the result  follows. 
\end{proof}

\noindent If we want to get $\epsilon$ accuracy, since in the Nesterov's smoothing $D>0$,  we need to take $\gamma = \mathcal{O}\left( \dfrac{\epsilon}{2D}\right)$. Additionally, we have (see \eqref{eq:49})
\vspace{-0.2cm}
\begin{align*}
L_{i}(F_{\gamma}) = \mathcal{O}\left(\gamma^{-1}\right), \quad \text{then}  \quad
S_{\alpha}(\gamma) = \mathcal{O}\left(\gamma^{-\alpha}\right), \vspace{-0.2cm}
\end{align*}

\noindent and 
\vspace{-0.2cm}
\begin{align*}
R_{1-\alpha}^2 =  \|x_{\bar{k}} - x_{*}\|_{1-\alpha}^2 = \sum_{i=1}^{N}L_{i}^{1-\alpha}(F_{\gamma})  \|x_{\bar{k}}^{(i)} - x_{*}^{(i)}\|_{(i)}^{2}  = \sum_{i=1}^{N}\mathcal{O}\left(\gamma^{\alpha-1}\right) \|x_{\bar{k}}^{(i)} - x_{*}^{(i)}\|_{(i)}^{2}, \vspace{-0.2cm}
\end{align*}
for some $\bar{k}>0$ and for some  $x_{*} \in \bar{X}^{*}_{\gamma}$. Therefore,
\vspace{-0.2cm}
\begin{equation*}
S_{\alpha}(\gamma) R_{1-\alpha}^2 = \sum_{i=1}^{N}\mathcal{O}\left(\gamma^{-1}\right) \|x_{\bar{k}}^{(i)} - x_{*}^{(i)}\|_{(i)}^{2}. \vspace{-0.2cm}
\end{equation*}
Hence, inequality \eqref{eq:ConvRate} becomes (recall that in this case $\mathcal{B}$ is the identity operator):
\vspace{-0.2cm}
\begin{equation}
\mathbb{E}[F(x_{k})] - F^{*} \leq \mathcal{O}\left(\dfrac{1}{\gamma(k+4)}\right) + \gamma D. \vspace{-0.2cm}
\end{equation} 
Since $\gamma = \mathcal{O}\left( \dfrac{\epsilon}{2D}\right)$, it implies that the complexity  is of order $\mathcal{O}\left( \epsilon^{-2} \right)$. On other hand for the Moreau, Forward-Backward and Douglas-Rachford envelopes we have $F(\mathcal{B}(x_{k})) \leq F_{\gamma}(x_{k})$, i.e., $D=0$.  Hence in these cases, we obtain 
\vspace{-0.2cm}
\begin{equation*}
\mathbb{E}[F(\mathcal{B}(x_{k}))] - F^{*} \leq \dfrac{2 S_{\alpha}(\gamma) R_{1-\alpha}^2 }{k+4}. \vspace{-0.2cm}
\end{equation*} 

\noindent Then, in the Moreau envelope, we have $S_{\alpha}(\gamma) = \dfrac{N}{\gamma^{\alpha}}$ for some $\gamma>0$ (see Section \ref{sec:Moreau}); in the Forward-Backward,  $S_{\alpha}(\gamma) =\displaystyle\sum_{i=1}^{N}\dfrac{\|I_{n_{i} \times n_{i}} - \gamma A_{ii}\|^{\alpha}  }{\gamma^{\alpha}}$  for some $\gamma\in \left(0, 1/L\right)$ (see Section \ref{sec:Forward-Backward}); and in the Douglas-Rachford we have $S_{\alpha}(\gamma) = \dfrac{\|U_{i}^{T}\left( P+P^2 \right) U_{i}\|^{\alpha}}{(2\gamma)^{\alpha}}$ for some $\gamma\in \left(0, 1/L\right)$ and $P=
2\left( I_{n} + \gamma A\right)^{-1} - I_{n}$ (see Section \ref{sec:Douglas-Rachford}). 
Since the constant $\gamma$ does not  depend on $\epsilon$,  the complexity in these cases is of order $\mathcal{O}\left( \epsilon^{-1}\right)$.    

\medskip

\noindent Below we discuss the computation cost of \red{$U_{i_{k}}^{T}\nabla F_{\gamma}(x_{k})$} at each iteration for the previous four examples. %In the  first three examples, in order to compute $e_{i_{k}}^{T}\nabla F_{\gamma}(x_{k})$, we need to compute a prox of $\psi$ or a component of the prox of $\psi$. Moreover, the subproblem \eqref{eq:smNes} needs to be simple in the last example.
 
\noindent \textit{Example 1:} Consider the Moreau envelope. In this case we have
    \vspace{-0.2cm}
	\red{\begin{equation*}
	U_{i}^{T}\nabla F_{\gamma}(x) = \gamma^{-1}\left(x^{(i)} - U_{i}^{T}\text{prox}_{\gamma F}(x)\right).  \vspace{-0.2cm}
	\end{equation*}}
	\noindent Hence, this smoothing is efficient in the context of coordinate descent when \red{a block of components} of the $\text{prox}$ of  $F$ can be evaluated easily based on the previously computed $\text{prox}$,  given that only a block of coordinates is modified in the prior iteration. For example:
	\begin{enumerate}
		\item $F(x) = \sum_{i=1}^{N} F_{i}(U_{i}^{T}x ) $,   with  $\text{prox}$  of $F_{i}$ computed in $\mathcal{O}(n_{i})$ operations. 
		\item $F(x) = g(Ax+ b)$, with $b\in\mathbb{R}^{m}$ and $A\in\mathbb{R}^{m\times n}$, such that 
		$AA^{T} = 1/\alpha I$. Then:
		\vspace{-0.1cm}
        \begin{equation*}
		\text{prox}_{F}(x) =   x - \alpha A^{T} \left( Ax + b  - \text{prox}_{\alpha^{-1}g}(Ax+b)\right). \vspace{-0.1cm}
		\end{equation*}
		As explained in \cite{NesSti:17} \red{the computational cost of updating $Ax$ in each iteration is given by $\mathcal{O}(m n_{i})$ operations}. If prox of $g$ is computed in $\mathcal{O}(n)$ operations, then 
		the complexity of updating \red{$U_{i}^{T}\nabla F_{\gamma}(x)$ is given by  $\mathcal{O}( m n_{i} + n )$} operations. One example of such  $g$  whose prox can be computed in $\mathcal{O}(n)$ operations is $g(x) = \|x\|_{\ell_1}$. 
	\end{enumerate}
	\textit{Example 2}: Consider the Forward-Backward envelope, for problem \eqref{eq:1}, we have 
    \vspace{-0.2cm}
	\red{\begin{equation*}
    \label{eq:55}
	U_{i}^{T}	\nabla F_{\gamma}(x) = \gamma^{-1}U_{i}^{T}\left(I- \gamma A\right)\left[ x -  \text{prox}_{\gamma \psi}(x - \gamma (Ax+b) ))\right]. \vspace{-0.2cm}
	\end{equation*}}
	\noindent\red{The computational cost of  $Ax$ in each iteration is of order $\mathcal{O}(n n_{i})$ = $\mathcal{O}(n)$ }. In this case, it is interesting to apply  Algorithm 1 for solving the Forward-Backward envelope when the prox of $\psi$ can also be computed in $\mathcal{O}(n)$ operations, such as: 
	\begin{enumerate}
		\item $\psi(x) = \lambda \|x\|$. Then, the prox of $\psi$ is given by  
        \vspace{-0.2cm}
		\begin{align*}
		\text{prox}_{\lambda \|x\|}(x) = \left( 1 - \dfrac{\lambda}{\|x\|}\right)_{+} x.  \vspace{-0.2cm}
		\end{align*}
        \item $\psi(x) = \sum_{s\in \mathcal{S}} \| x_{s}\|$, where $\mathcal{S}$ is a partion of $\{1,\cdots,n\}$. This function serves as regularizer to induce group sparsity. For $s\in \mathcal{S}$, the components of the proximal mapping indexed by $s$ are
        \vspace{-0.2cm}
        \begin{align*}
		(\text{prox}_{\lambda \psi}(x))_s = \left( 1 - \dfrac{\lambda}{\|x_{s}\|}\right)_{+} x_{s}.  
        \vspace{-0.2cm}
		\end{align*}
        
		\item $\psi  = \delta_{B_{2}[0,r]}  = \delta_{\{x: \|x\| \leq r \}} $ for some $r>0$. The projection onto $B_2$ is given by:
        \vspace{-0.2cm}
		\begin{equation*}
		\text{prox}_{\delta_{B_{2}[0,r]}}(x)= \left\lbrace\begin{array}{ll} x \; 
		\text{ if } \; \|x\| \leq r  \\ 
		\dfrac{r x}{\|x\|} \; \text{ otherwise.}  
		\end{array}\right.   \vspace{-0.2cm}
		\end{equation*}
		\item $\psi = \delta_{B_{1}[c,r]}  = \delta_{\{x: \|x-c\|_{\ell_1} \leq r \}}$ for some $c>0$. Then, the prox can be computed in $\mathcal{O}(n)$ operations, see  \cite{DucSha:08}.
		\item $\psi = \delta_{C}$, where $C=\{x \in \mathbb{R}^{n} : a^{T}x = b, l \leq x \leq u  \} $. Then, the prox can be computed in $\mathcal{O}(n)$ operations.
		\item $\psi = \delta_{C}$ where $C = \{ x: Ax=b \}$ with a full row rank $A \in \mathbb{R}^{m \times n}$ and $b \in \mathbb{R}^{m}$. Assuming that $m \ll n$, then for any $\gamma>0$, $\text{prox}_{\gamma\psi} (x) = P_{C}(x) = \dfrac{1}{\gamma}\left( x - A^{T}(AA^{T})^{-1}(Ax-b)\right) $.
		After a preprocessing step in which $AA^{T}$ is computed, we can compute $P_{C}(x)$ in $\mathcal{O}(n)$ operations.
		\item $\psi(x) = \lambda TV(x)$, where $\lambda>0$, the total variation regularization. In this case one can also compute the prox in  $\mathcal{O}(n)$ operations, see \cite{BarSra:14,Joh:13,KolPocRol:16}.  
		\item $\psi(x) = \|x\|^{r+2}$, where $r \geq 0$. Then, computing the prox is equivalent to solving a polynomial of degree $r+1$, i.e., $\text{prox}_{\gamma\psi} (x) = -\theta c$,  where $\theta$ is a root of the polynomial $1-\theta - \|c\|^r \theta^{r+1} =0$ (when $c\neq 0$) . See also \cite{LuFreNes:18} for other examples. 
	\end{enumerate}
	
	\noindent \textit{Example 3}: Consider the Douglas-Rachford envelope, for problem \eqref{eq:1}, we have
    \vspace{-0.2cm}
	\red{\begin{align}
    \label{eq:56}
	U_{i}^{T}\nabla F_{\gamma}  (x) = U_{i}^{T}\left( 2H  - I_{n}\right) \gamma H \left((x-\gamma b) - \text{prox}_{\gamma \psi}\left( 2H (x-\gamma b) -x\right) \right), \vspace{-0.3cm}
	\end{align}}
	\noindent with $H=\left( I_{n} + \gamma A\right)^{-1}$. In this case, we assume that $H$ can be computed easily, e.g., the matrix $A$ is diagonal. Note also that in this case the computation of the vector $Hx$ can be done in \red{$\mathcal{O}(n n_{i})=\mathcal{O}(n)$} operations and consequently  Algorithm 1 can be efficiently implemented for minimizing the Douglas-Rachford envelope when the prox of $\psi$ can be also computed in $\mathcal{O}(n)$ operations, see examples for the Forward-Backward envelope. \\

	\noindent \textit{Example 4}: Consider the Nesterov's smoothing. As explained in \cite{NesSti:17}, if the set $Q$ and function $\phi$ in \eqref{eq:smNes} are simple and the product $Ax$ is known (in the coordinate descent methods this vector can be updated at each iteration in \red{$\mathcal{O}(m n_{i})$} operations), then the vector $u_{\gamma}(x)$ is computed in $\mathcal{O}(m)$ operations.  Moreover, one component of the second term of the gradient in \eqref{grad:NS}, \red{$U_{i}^{T}A^{T} u_{\gamma}(x)$, is computed in $\mathcal{O}(m n_{i})$ operations}. 
	The following examples satisfy this structure:
	\begin{enumerate}
		\item Consider $F(x) = \|\tilde{A}x - \tilde{b}\|_{\infty} = \max_{u\in Q}\{\langle Ax,u \rangle - \langle b, u \rangle \} $, where
        \vspace{-0.2cm}
		\begin{equation*}
		\tilde{A} \in \mathbb{R}^{m\times n}, \,\ \tilde{b} \in \mathbb{R}^{m}, \,\ A = \begin{bmatrix}
		\tilde{A} \\ -\tilde{A}
		\end{bmatrix} \in \mathbb{R}^{2m \times n}, \,\ b = \begin{bmatrix}
		\tilde{b} \\ - \tilde{b}
		\end{bmatrix} \in \mathbb{R}^{2m} \vspace{-0.2cm}
		\end{equation*}
		
		\noindent and $Q := \{u_{j} \in \mathbb{R}^{2m} : \sum_{j} u_{j} =1, u_{j} \geq 0 \}$ is the unit simplex in $\mathbb{R}^{2m}$. 
		The smooth approximation of $F$ is given by 
        \vspace{-0.2cm}
		\begin{equation*}
		F_{\gamma}(x) = \gamma \log \left( \dfrac{1}{2m} \sum_{j=1}^{2m} \exp\left( \dfrac{e_{j}^{T}Ax - b_{j} }{\gamma} \right) \right). \vspace{-0.2cm}
		\end{equation*}

        \item Consider $F(x) = \|Ax-b\|_{\ell_1} = \max_{u \in Q} \{\langle Ax,u \rangle - \langle b, u \rangle \}$, where $Q = [-1,1]^{n}$. The smooth approximation of $F$ is given by 
        \vspace{-0.2cm}
		\begin{equation*}
		F_{\gamma} (x) = \sum_{j=1}^{m} \|e_{j}^{T}A\| \cdot \phi_{\gamma}\left( \dfrac{|e_{j}^{T}Ax - b_{j}|}{\|e_{j}^{T}A\|}\right), \quad \phi_{\gamma}(t) = \left\lbrace\begin{array}{ll} \dfrac{t^{2}}{2\gamma}, \;  0\leq t \leq \gamma \\
		t - \dfrac{\gamma}{2}, \; \gamma \leq t . 
		\end{array}\right. \vspace{-0.2cm}
		\end{equation*} 
		
		\item Consider the total variation problem $F(x) =   \dfrac{1}{2}\|Bx-c\|^2 + \lambda \|Dx\|_{\ell_1}$, with $B \in \mathbb{R}^{p \times n}$, $D \in \mathbb{R}^{m \times n}$ and $c\in \mathbb{R}^{p}$.  We can use the smoothing function described above for the nonsmooth term, $\|Dx\|_{\ell_1}$. %Then, the most expensive operations in the computation of one component of the gradient of the smoothing function are  $Bx$ and $Dx$, which can be updated in $O(p)$ and $O(m)$ operations, respectively.

        \item Consider the function $F(x) =  \|x\| = \max_{\|u\| \leq 1} \langle x,u \rangle$. Choosing $d(u) = \dfrac{1}{2}\|u\|^2$, the smooth approximation is given by:
        \vspace{-0.2cm}
        \begin{align*}
            F_{\gamma}(x) = \left\lbrace\begin{array}{ll}   \dfrac{1}{2 \gamma} \|x\|^2, \quad  \|x\| \leq \gamma \\
		      \|x\| - \dfrac{\gamma}{2} , \quad \gamma \leq \|x\| . 
		\end{array}\right. \vspace{-0.2cm}
        \end{align*}
	\end{enumerate}

\noindent Next, let us discuss  the computational cost of $\mathcal{B}(x_{k})$. Note that, in the Forward-Backward and Douglas-Rachford envelopes,
$\mathcal{B}(x_{k})$ is computed at each iteration since we need this vector for updating
\red{$U_{i}^{T}\nabla F_{\gamma}(x_{k})$}.  Moreover, in the Nesterov's smoothing $\mathcal{B}$ is the identity operator, which obviously has no additional computational cost. On other hand, in the Moreau envelope we have an additional cost since $\mathcal{B}$ is the full prox, i.e., $\mathcal{B}(x_{k}) =  \text{prox}_{\gamma F}(x_{k})$ and in \red{$U_{i}^{T}\nabla F_{\gamma}(x_{k})$ we only need to compute one block of components of the full prox}. However, it is not necessary to compute $\mathcal{B}(x_{k})$ at each iteration, we only need this computation when the algorithm stops.

%%%%%%%%%%%%%%%%%

\subsection{Accelerated coordinate descent: convex case}
In this section we consider the algorithm proposed in \cite{NesSti:17}.  In the next algorithm, $\sigma_{1-\alpha}\geq0$ is the \red{strong convexity parameter} of $F_{\gamma}$ with respect to the norm $\|\cdot\|_{1-\alpha}$.
\begin{center}
	\noindent\fbox{%
		\parbox{12 cm}{%
			\textbf{Algorithm 2} \\
	        Define $\nu_{0} = x_{0} \in \text{dom} F_{\gamma}$, $z_{0} = x_{0}$, $A_{0}=0$, $B_{0} = 1$ and $\beta = \dfrac{\alpha}{2}$.    \\
			For $k\geq 0$ do:
			\begin{enumerate}
				\item \red{Sample $i_k$ from $\mathcal{R}_{\alpha}$.}
				\item Find parameter $a_{k+1} > 0$ from equation $a^{2}_{k+1} S_{\beta}^2  = A_{k+1} B_{k+1}$, \\
				where $A_{k+1} = A_{k} + a_{k+1}$ and $B_{k+1} = B_{k} + \sigma_{1-\alpha} a_{k+1}$.
				\item Define $\alpha_{k} = \dfrac{a_{k+1}}{ A_{k+1}}$, $\beta_{k} = \dfrac{ \sigma_{1-\alpha} a_{k+1}}{B_{k+1}}$ and $y_{k} = \dfrac{(1-\alpha_{k})x_{k} + \alpha_{k}(1-\beta_{k})\nu_{k}}{1 - \alpha_{k}\beta_{k}}$.
				\item  Compute $h^{(i_{k})} (y_{k})= - U_{i_{k}}^{T}\nabla F_{\gamma} (y_{k})$. 
         \item Update  $x_{k+1} = y_{k} + \dfrac{1}{ L_{i_{k}}(F_{\gamma})  } U_{i_{k}} h^{(i_{k})} (y_{k})$ and 
				\\$\nu_{k+1} = (1-\beta_{k})\nu_{k} + \beta_{k}y_{k} + \dfrac{a_{k+1}}{(L_{i_{k}}(F_{\gamma}))^{1-\alpha} B_{k+1}\pi_{\beta}[i_{k}]}U_{i_{k}} h^{(i_{k})} (y_{k}). $ 		 	
				
			\end{enumerate}
	}}
\end{center}

\vspace{0.2cm}

\begin{theorem}
	\label{theo:3}
	Let Assumption \ref{ass1} hold and $x_{*}$ be a minimizer of problem \eqref{eq:prob2}. Then, the iterates of Algorithm 2 satisfy
    \vspace{-0.2cm}
	\begin{equation*}
	\mathbb{E}[F_{\gamma}(x_{k})] - F_{\gamma}^{*} \leq \dfrac{2 S_{\beta}^{2}(\gamma) \|x_{0} - x_{*}\|^{2}_{1-\alpha} }{k^{2}}. \vspace{-0.2cm}
	\end{equation*} 
	
\end{theorem}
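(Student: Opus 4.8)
The plan is to recognize Algorithm 2 as the accelerated coordinate descent scheme of \cite{NesSti:17} applied to the smooth approximation $F_{\gamma}$, and to invoke its convergence guarantee. First I would check that $F_{\gamma}$ meets the structural hypotheses required there: by Assumption \ref{ass1}[A.3] the gradient of $F_{\gamma}$ is block coordinate-wise Lipschitz continuous on $\mathbb{R}^{n}$ with constants $L_{i}(F_{\gamma})>0$, and by Assumption \ref{ass1}[A.4] $F_{\gamma}$ is convex. Moreover the counter $\mathcal{R}_{\alpha}$ samples with exactly the probabilities $p_{\alpha}^{(i)}$ from \eqref{eq:probability}, the norms $\|\cdot\|_{\alpha}$ and the quantities $S_{\beta}(\gamma)$, $\pi_{\beta}[i]$ are defined as in \cite{NesSti:17}, and the choice $\beta=\alpha/2$ matches their prescription; hence the statement is an instance of their main accelerated result, with $\sigma_{1-\alpha}\ge 0$ any valid strong convexity constant of $F_{\gamma}$ in the norm $\|\cdot\|_{1-\alpha}$.

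Since problem \eqref{eq:prob2} is only assumed convex here, I would simply take $\sigma_{1-\alpha}=0$ (always admissible). Then $B_{k}=1$ for all $k$, the defining equation $a_{k+1}^{2}S_{\beta}^{2}=A_{k+1}B_{k+1}$ reduces to $a_{k+1}^{2}S_{\beta}^{2}=A_{k+1}=A_{k}+a_{k+1}$, and the estimate-sequence (Lyapunov) inequality produced in \cite{NesSti:17}, after taking total expectation, gives
\[
A_{k}\bigl(\mathbb{E}[F_{\gamma}(x_{k})]-F_{\gamma}^{*}\bigr)\le \frac{1}{2}\|x_{0}-x_{*}\|_{1-\alpha}^{2},
\]
using $A_{0}=0$ and $\nu_{0}=x_{0}$. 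It then remains to lower bound $A_{k}$. From $\sqrt{A_{k+1}}-\sqrt{A_{k}}=\dfrac{a_{k+1}}{\sqrt{A_{k+1}}+\sqrt{A_{k}}}\ge \dfrac{a_{k+1}}{2\sqrt{A_{k+1}}}=\dfrac{1}{2S_{\beta}}$ (the last equality because $a_{k+1}S_{\beta}=\sqrt{A_{k+1}}$) and $A_{0}=0$, one gets $\sqrt{A_{k}}\ge k/(2S_{\beta})$, i.e. $A_{k}\ge k^{2}/(4S_{\beta}^{2})$. Combining this with the displayed bound yields $\mathbb{E}[F_{\gamma}(x_{k})]-F_{\gamma}^{*}\le 2S_{\beta}^{2}(\gamma)\|x_{0}-x_{*}\|_{1-\alpha}^{2}/k^{2}$, which is the claim.

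The main obstacle, and the only genuinely nontrivial point, is the displayed Lyapunov inequality itself, i.e. exhibiting a potential of the form $A_{k}\bigl(F_{\gamma}(x_{k})-F_{\gamma}^{*}\bigr)+\frac{B_{k}}{2}\|\nu_{k}-x_{*}\|_{1-\alpha}^{2}$ that is nonincreasing in conditional expectation along the iterates of Algorithm 2. This is exactly what the convergence proof in \cite{NesSti:17} establishes: it combines the per-coordinate descent estimate coming from \eqref{lip2} with constant $L_{i_{k}}(F_{\gamma})$, the convexity of $F_{\gamma}$ at $y_{k}$, the specific reweighting by $\pi_{\beta}[i_{k}]$ in the $\nu_{k+1}$-update (which makes the stochastic cross terms cancel after conditioning on $i_{k}$), and the algebraic identities relating $a_{k+1}$, $A_{k+1}$, $B_{k+1}$, $\alpha_{k}$, $\beta_{k}$, $y_{k}$. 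All of these ingredients are guaranteed by Assumption \ref{ass1}[A.3--A.4] together with the definitions fixed above, so invoking their theorem is legitimate and the remaining steps are the routine $A_{k}$ estimate just described.
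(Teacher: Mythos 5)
Your proposal is correct and takes essentially the same route as the paper: the paper's proof of Theorem \ref{theo:3} is a one-line invocation of Theorem 2 in \cite{NesSti:17}, justified by Assumption \ref{ass1}[A.3--A.4], exactly as you do. The extra detail you supply (setting $\sigma_{1-\alpha}=0$, the Lyapunov inequality, and the bound $A_k\ge k^2/(4S_\beta^2)$) is a faithful unpacking of that cited result and is consistent with the estimate \eqref{eq:theoNSACC} and the bound \eqref{eq:15} used later in the paper.
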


\begin{proof}
	The result follows from \red{\cite[Theorem 2]{NesSti:17}} and Assumption \ref{ass1}[A.3-A.4]. 
\end{proof}

\noindent Next theorem presents the convergence rate for the original function.

\begin{theorem}
	\label{theo:4}
	Let Assumption \ref{ass1} hold and $x_{*}$ be a minimizer of  problem \eqref{eq:prob2}. Then, the iterates of Algorithm 2 satisfy
    \vspace{-0.2cm}
	\begin{equation*}
	\mathbb{E}[F(\mathcal{B}(x_{k}))] - F^{*} \leq \dfrac{2 S_{\beta}^{2}(\gamma) \|x_{0} - x_{*}\|^{2}_{1-\alpha} }{k^{2}} + \gamma D. \vspace{-0.2cm}
	\end{equation*} 
\end{theorem}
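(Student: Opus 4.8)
The plan is to combine Theorem \ref{theo:3} with the two-sided sandwich provided by Assumption \ref{ass1}[A.1--A.2], exactly as was done for the non-accelerated case in Theorem \ref{theo:2}. First I would invoke Assumption \ref{ass1}[A.1], which gives $F_{\gamma}^{*} \leq F^{*}$, equivalently $-F^{*} \leq -F_{\gamma}^{*}$. Next I would invoke Assumption \ref{ass1}[A.2] applied at the point $x_{k}$: since $x_{k} \in \text{dom} F_{\gamma}$, we have $F(\mathcal{B}(x_{k})) - \gamma D \leq F_{\gamma}(x_{k})$, i.e. $F(\mathcal{B}(x_{k})) \leq F_{\gamma}(x_{k}) + \gamma D$. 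Taking expectation over the random indices $i_0,\dots,i_{k-1}$ generated by the counter $\mathcal{R}_{\alpha}$, linearity of expectation yields $\mathbb{E}[F(\mathcal{B}(x_{k}))] \leq \mathbb{E}[F_{\gamma}(x_{k})] + \gamma D$.

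Then I would chain these:
\begin{align*}
\mathbb{E}[F(\mathcal{B}(x_{k}))] - F^{*}
&\leq \mathbb{E}[F_{\gamma}(x_{k})] + \gamma D - F^{*} \\
&\leq \mathbb{E}[F_{\gamma}(x_{k})] + \gamma D - F_{\gamma}^{*} \\
&= \left(\mathbb{E}[F_{\gamma}(x_{k})] - F_{\gamma}^{*}\right) + \gamma D.
\end{align*}
Applying Theorem \ref{theo:3} to bound the bracketed term by $\dfrac{2 S_{\beta}^{2}(\gamma) \|x_{0} - x_{*}\|^{2}_{1-\alpha}}{k^{2}}$ gives precisely the claimed inequality. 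Note that Theorem \ref{theo:3} itself only requires Assumption \ref{ass1}[A.3--A.4], so the full Assumption \ref{ass1} is what makes all four pieces available simultaneously.

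Honestly, there is no real obstacle here: the statement is a direct corollary of Theorem \ref{theo:3}, and the proof is a three-line argument identical in structure to the proof of Theorem \ref{theo:2}, with the only change being the substitution of the accelerated rate $\mathcal{O}(S_{\beta}^{2}(\gamma)\|x_0-x_*\|^2_{1-\alpha}/k^2)$ in place of the non-accelerated rate $\mathcal{O}(S_{\alpha}(\gamma)R_{1-\alpha}^2/(k+4))$. The one point that warrants a moment's care is making sure the expectation is handled correctly --- the inequality $F(\mathcal{B}(x_{k})) \leq F_{\gamma}(x_{k}) + \gamma D$ is deterministic (it holds for every realization of the iterates), so taking expectations preserves it --- but this is routine. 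I would therefore write the proof in essentially two sentences, citing Assumption \ref{ass1}[A.1--A.2] for the sandwich and Theorem \ref{theo:3} for the rate.
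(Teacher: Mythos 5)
Your proposal is correct and follows exactly the paper's own argument: Assumption \ref{ass1}[A.1] gives $F^{*}\geq F_{\gamma}^{*}$, Assumption \ref{ass1}[A.2] gives $\mathbb{E}[F(\mathcal{B}(x_{k}))]\leq \mathbb{E}[F_{\gamma}(x_{k})]+\gamma D$, and the bound then follows from Theorem \ref{theo:3}. The only difference is that you spell out the chaining and the handling of the expectation in slightly more detail than the paper does.
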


\begin{proof}
	By Assumption \ref{ass1}[A.1-A.2] we have $F^{*} \geq F^{*}_{\gamma}$ and $\mathbb{E}[F(\mathcal{B}(x_{k})] \leq \mathbb{E}[F_{\gamma}(x_{k})] + \gamma D$. Using the Theorem \ref{theo:3}, it follows the result. 
\end{proof}

\noindent Note that we need to take $\gamma = \mathcal{O}\left( \dfrac{\epsilon}{2D}\right)$, when $D>0$.  Making a similar analysis as in the non-accelerated case, we have that the complexity for Nesterov's smoothing is of order $\mathcal{O}\left( \epsilon^{-1}\right)$.
On other hand for the Moreau, Forward-Backward and Douglas-Rachford envelopes we get that the complexity is of the order $\mathcal{O}\left( \epsilon^{-\frac{1}{2}}\right)$,
since $D=0$ and $\gamma$ does not depend on $\epsilon$.

\subsection{Coordinate descent: $q$-growth case}
\noindent Under the additional $q$-growth  condition from Assumption \ref{ass2}  we derive below improved convergence rates for the previous algorithms. Recall that $X^{*}$ is the set of minimizers of the original problem \eqref{eq:prob},  $\bar{x} = [x]_{X^{*}}$ is the projection of $x$ onto $X^{*}$ and $\mathcal{C}$ is the identity operator for Moreau envelope, Forward-Backward envelope and Nesterov's smoothing. For simplicity, let us define
\begin{align}
&\eta = \min\left\{\bar \kappa,  \bar{\kappa}^{\frac{2}{\bar q}}  \right\}, \,\  R = \max_{k\geq0} \dfrac{1}{2}  \|x_{k} - \bar x_{k}\|^2_1, \,\ \Delta_{0} = F_{\gamma}(x_{0}) - F^{*}  +  R, 
\label{eq:33}\\
&C_{1} = \max \left\{ 1 - \dfrac{ \bar\kappa \Delta_{0}^{\frac{q-2}{2}}}{N(1+\bar \kappa)    }, 1 - \dfrac{\eta}{N(1+\bar \kappa)}      \right\}. \nonumber
\end{align}

\begin{theorem}
	\label{theo:qdif2}
	Let Assumptions \ref{ass1} and \ref{ass2} hold with $\mathcal{C}$ being the identity operator. Assuming also that $i_{k}$ is chosen uniformly at random, then the iterates of  Algorithm 1 satisfy
    \vspace{-0.2cm}
	\begin{align*}
	\dfrac{1}{2}\mathbb{E}\left[ \|x_{k} - \bar{x}_{k}\|^2_{1} 	\right] +  \mathbb{E}\left[ F_{\gamma}(x_{k}) -  F^{*}\right]    \leq C_1^{k} \left(  \dfrac{1}{2}\|x_{0}- \bar{x}_{0}\|^{2}_{1}   +   F_{\gamma}(x_{0}) - F^{*}\right). \vspace{-0.2cm}
	\end{align*} 
\end{theorem}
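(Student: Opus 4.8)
The plan is to run a one-step analysis of Algorithm 1 that produces a contraction in the composite Lyapunov function $V_k := \tfrac12\|x_k - \bar x_k\|_1^2 + (F_\gamma(x_k) - F^*)$. The starting point is the standard coordinate-descent decrease from \cite{Nes:10}: since $i_k$ is uniform and $x_{k+1} = x_k + \tfrac{1}{L_{i_k}(F_\gamma)}U_{i_k}h^{(i_k)}(x_k)$, taking the conditional expectation over $i_k$ gives
\begin{equation*}
\mathbb{E}_{i_k}[F_\gamma(x_{k+1})] \leq F_\gamma(x_k) - \frac{1}{2N}\sum_{i=1}^N \frac{1}{L_i(F_\gamma)}\|U_i^T\nabla F_\gamma(x_k)\|_{(i)}^2 = F_\gamma(x_k) - \frac{1}{2N}\|\nabla F_\gamma(x_k)\|_{-1}^2,
\end{equation*}
recalling the weighted norm $\|y\|_\alpha^2 = \sum_i (L_i(F_\gamma))^\alpha\|y^{(i)}\|_{(i)}^2$ and that its dual w.r.t. the block structure is $\|\cdot\|_{-1}$. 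So the per-step progress is controlled by $\|\nabla F_\gamma(x_k)\|_{-1}^2$, and I need to lower bound this quantity using Assumptions \ref{ass1} and \ref{ass2}.

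Next I would convert the $\bar q$-growth of $F_\gamma$ (Assumption \ref{ass2}: $F_\gamma(x) - F^* \geq \tfrac{\bar\kappa}{2^{\bar q/2}}\|x-\bar x\|_1^{\bar q}$) together with convexity of $F_\gamma$ into a gradient-dominance (Polyak–Łojasiewicz-type) inequality in the $\|\cdot\|_{-1}$ norm. Convexity gives $F_\gamma(x_k) - F^* \leq \langle \nabla F_\gamma(x_k), x_k - \bar x_k\rangle \leq \|\nabla F_\gamma(x_k)\|_{-1}\,\|x_k - \bar x_k\|_1$; combining with the growth bound to eliminate $\|x_k-\bar x_k\|_1$ yields, for the two regimes $\bar q = 2$ and $\bar q \in [1,2)$ handled separately, a bound of the form $\|\nabla F_\gamma(x_k)\|_{-1}^2 \gtrsim \eta\cdot(\text{something involving }F_\gamma(x_k)-F^*)$ where $\eta = \min\{\bar\kappa, \bar\kappa^{2/\bar q}\}$ accounts for both cases. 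The reason $\Delta_0$ enters $C_1$ is that in the case $\bar q < 2$ the PL constant degrades by a factor like $(F_\gamma(x_k) - F^* + R)^{(q-2)/2}$, and by the descent property \eqref{eq:dec} the quantity $F_\gamma(x_k)-F^*$ stays bounded above by $\Delta_0 = F_\gamma(x_0)-F^*+R$ throughout, so this worst-case factor can be frozen at $\Delta_0^{(q-2)/2}$.

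The remaining piece is to control the change in the $\tfrac12\|x_k - \bar x_k\|_1^2$ term, since $\bar x_{k+1}$ need not equal $\bar x_k$. Here I would use that $\bar x_{k+1}$ is the projection of $x_{k+1}$ onto $X^*$, hence $\|x_{k+1} - \bar x_{k+1}\|_1 \leq \|x_{k+1} - \bar x_k\|_1$, and then expand $\|x_{k+1} - \bar x_k\|_1^2$ using the one-coordinate update — in expectation this produces $\|x_k - \bar x_k\|_1^2$ minus a positive multiple of $\|\nabla F_\gamma(x_k)\|_{-1}^2$ plus a cross term handled by the coordinate-wise descent lemma; the bookkeeping is the same kind as in \cite{Nes:10} but carried through the weighted norms. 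Adding the two expected-inequalities and packaging the coefficients so that the combined decrease is at least a factor $C_1 = \max\{1 - \bar\kappa\Delta_0^{(q-2)/2}/(N(1+\bar\kappa)),\, 1 - \eta/(N(1+\bar\kappa))\}$ of $V_k$ gives $\mathbb{E}[V_{k+1}] \leq C_1\,\mathbb{E}[V_k]$, and iterating yields the claim; note $\mathcal C$ being the identity is what lets us state everything at $x_k$ itself rather than $\mathcal C(x_k)$, via $F^* \geq F_\gamma^*$ and the growth being on $\bar X$. The main obstacle I anticipate is the careful splitting of the two growth regimes $\bar q=2$ versus $\bar q\in[1,2)$ so that a \emph{single} contraction factor $C_1$ with the stated form covers both, and making sure the constant $\eta = \min\{\bar\kappa, \bar\kappa^{2/\bar q}\}$ together with the $\Delta_0$-dependent term genuinely dominates the per-step loss in each case; the norm-juggling between $\|\cdot\|_1$, $\|\cdot\|_{-1}$ and the Euclidean-type $\|\cdot\|$ is routine but must be done consistently.
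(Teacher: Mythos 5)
Your plan is correct and, at its core, is the same argument as the paper's: a one-step expansion of $\tfrac12\|x_{k+1}-x\|_1^2$ along the single updated block, the descent lemma to absorb the quadratic term, convexity of $F_\gamma$ for the cross term, the projection property to pass from $\bar x_k$ to $\bar x_{k+1}$, and then the growth condition split into the $\bar q=2$ and $\bar q\in[1,2)$ regimes (with the sub-case $F_\gamma(x_k)-F^*>1$ producing the $\Delta_0^{(\bar q-2)/2}$ factor, frozen via the monotonicity \eqref{eq:dec}). One caveat: the gradient-dominance (PL) step you insert is not how the paper closes the argument, and if you route the decrease literally through $\|\nabla F_\gamma(x_k)\|_{-1}^2\gtrsim\eta\,(F_\gamma(x_k)-F^*)$ you will not recover the stated $C_1$ — you lose a factor (e.g.\ $\bar\kappa/(4N)$ instead of $\bar\kappa/(N(1+\bar\kappa))$ when $\bar q=2$) and you still have to deal with the \emph{positive} term $+\tfrac{1}{2N}\|\nabla F_\gamma(x_k)\|_{-1}^2$ that the distance expansion produces (your sign there is reversed). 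The paper's cleaner route, which your "add the two expected inequalities" remark already points to, is to let the two gradient-norm contributions cancel exactly, leaving the decrement $\tfrac1N(F_\gamma(x_k)-F^*)$, and then to apply the growth condition only once, through the convex-combination splitting $F_\gamma(x_k)-F^*=\tfrac{\bar\kappa}{1+\bar\kappa}(\cdot)+(1-\tfrac{\bar\kappa}{1+\bar\kappa})(\cdot)$, to convert that decrement into the fraction $\tfrac{\bar\kappa}{N(1+\bar\kappa)}$ (resp.\ $\tfrac{\eta}{N(1+\bar\kappa)}$, $\tfrac{\bar\kappa\Delta_0^{(\bar q-2)/2}}{N(1+\bar\kappa)}$) of the full Lyapunov function. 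With that adjustment your bookkeeping lands exactly on $C_1$.
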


\begin{proof}
	We have that 
    \vspace{-0.2cm}
	\begin{align*}
	&\dfrac{1}{2}\|x_{k+1} - x\|^2_{1} \\&  = \dfrac{1}{2}\|x_{k}-x\|^{2}_{1} + L_{i_{k}}(F_{\gamma})\langle x_{k+1}^{(i)} - x_{k}^{(i)}, x_{k}^{(i)} - x^{(i)}  \rangle + \dfrac{L_{i_{k}}(F_{\gamma})}{2}\|x_{k+1}^{(i)} - x_{k}^{(i)}\|^{2}_{(i)} \\
	& = \dfrac{1}{2}\|x_{k}-x\|^{2}_{1} + L_{i_{k}}(F_{\gamma})\langle x_{k+1}^{(i)} - x_{k}^{(i)}, x_{k+1}^{(i)} - x^{(i)}  \rangle - \dfrac{L_{i_{k}}(F_{\gamma})}{2}\|x_{k+1}^{(i)} - x_{k}^{(i)}\|^{2}_{(i)} \\
	&= \dfrac{1}{2} \|x_{k}-x\|^{2}_{1} - \langle U_{i_{k}}^{T}\nabla F_{\gamma}(x_{k}), x_{k+1}^{(i)} - x^{(i)}  \rangle - \dfrac{L_{i_{k}}(F_{\gamma})}{2}\|x_{k+1}^{(i)} - x_{k}^{(i)}\|^{2}_{(i)} \\
	&= \dfrac{1}{2}\|x_{k}-x\|^{2}_{1} - \langle U_{i_{k}}^{T}\nabla F_{\gamma}(x_{k}), x_{k}^{(i)} - x^{(i)}  \rangle \\ 
	& \qquad - \langle U_{i_{k}}^{T}\nabla F_{\gamma}(x_{k}), x_{k+1}^{(i)} - x^{(i)}_{k}  \rangle - \dfrac{L_{i_{k}}(F_{\gamma})}{2}\|x_{k+1}^{(i)} - x_{k}^{(i)}\|^{2}_{(i)} \\
	&\leq \dfrac{1}{2} \|x_{k}-x\|^{2}_{1} - \langle U_{i_{k}}^{T}\nabla F_{\gamma}(x_{k}), x_{k}^{(i)} - x^{(i)}  \rangle + F_{\gamma}(x_{k}) - F_{\gamma}(x_{k+1}). \vspace{-0.2cm}
	\end{align*}
	
	\noindent Taking expectation with respect to $i_{k}$ and using the convexity of $F_{\gamma}$, we obtain
    \vspace{-0.2cm}
	\begin{align*}
	& \dfrac{1}{2} \mathbb{E}_{i_{k}}\left[ \|x_{k+1} - x\|^2_{1} 	\right] \leq \dfrac{1}{2} \|x_{k}-x\|^{2}_{1}  + \dfrac{1}{N} \langle \nabla F_{\gamma}(x_{k}), x-x_{k} \rangle + F_{\gamma}(x_{k}) - \mathbb{E}_{i_{k}}\left[ F_{\gamma}(x_{k+1})\right]  \\ 
	&\leq \dfrac{1}{2}\|x_{k}-x\|^{2}_{1}  + \dfrac{1}{N}(F_{\gamma}(x) - F_{\gamma}(x_{k})) + F_{\gamma}(x_{k}) - \mathbb{E}_{i_{k}}\left[ F_{\gamma}(x_{k+1}) \right]. \vspace{-0.2cm}
	\end{align*}
	
	\noindent Moreover, taking expectation with respect to $i_0,\cdots,i_{k-1}$ and using Assumption \ref{ass1}[A2], with $\mathcal{C}$ being the identity operator, we have
    \vspace{-0.2cm}
	\begin{align*}
	&\dfrac{1}{2}\mathbb{E}\left[ \|x_{k+1} - x\|^2_{1} 	\right] \leq \dfrac{1}{2} \mathbb{E}\left[ \|x_{k}-x\|^{2}_{1}\right]   + \dfrac{1}{N}\mathbb{E}\left[ F_{\gamma}(x) - F_{\gamma}(x_{k})\right]  + \mathbb{E}\left[ F_{\gamma}(x_{k}) -  F_{\gamma}(x_{k+1}) \right] \\
    &\leq \dfrac{1}{2} \mathbb{E}\left[ \|x_{k}-x\|^{2}_{1}\right]   + \dfrac{1}{N}\mathbb{E}\left[ F(x) - F_{\gamma}(x_{k})\right]  + \mathbb{E}\left[ F_{\gamma}(x_{k}) -  F_{\gamma}(x_{k+1}) \right] \\
	&= \dfrac{1}{2} \mathbb{E}\left[ \|x_{k}-x\|^{2}_{1}\right]  + \left( 1- \dfrac{1}{N}\right) \mathbb{E}\left[F_{\gamma}(x_{k}) - F(x)\right]  + \mathbb{E}\left[ F(x) - F_{\gamma}(x_{k+1})\right].  \vspace{-0.2cm}
	\end{align*}
	
	\noindent Hence for $x=\bar{x}_{k}$, we obtain
    \vspace{-0.2cm}
	\begin{align}
	&\dfrac{1}{2} \mathbb{E}\left[ \|x_{k+1} - \bar{x}_{k+1}\|^2_{1} 	\right] + \mathbb{E}\left[ F_{\gamma}(x_{k+1}) - F^{*} \right] \nonumber \\
	&\leq \dfrac{1}{2} \mathbb{E}\left[ \|x_{k}-\bar{x}_{k}\|^{2}_{1}\right]  + \mathbb{E}\left[F_{\gamma}(x_{k}) - F^{*}\right]  - \dfrac{1}{N} \mathbb{E}\left[F_{\gamma}(x_{k}) - F^{*}\right]. \label{eq:5} \vspace{-0.2cm}
	\end{align}

    \noindent First, consider $\bar{q}=2$, from Assumption \ref{ass2}, we have
    \vspace{-0.2cm}
	\begin{align}
	F_{\gamma}(x_{k}) - F^{*} &= \dfrac{\bar\kappa}{1+\bar\kappa} \left( F_{\gamma}(x_{k}) - F^{*} \right) + \left( 1 - \dfrac{\bar\kappa}{1+\bar\kappa}\right)  \left( F_{\gamma}(x_{k}) - F^{*} \right)\nonumber \\
	&\geq \dfrac{\bar\kappa}{1+ \bar\kappa} \left( F_{\gamma}(x_{k}) - F^{*} \right)  + \left( 1 - \dfrac{\bar\kappa}{1+\bar\kappa}\right) \dfrac{\bar\kappa}{2}  \|x_{k}-\bar{x}_{k}\|^{2}_{1}  \nonumber\\
	&= \dfrac{\bar\kappa}{1+ \bar \kappa} \left( F_{\gamma}(x_{k}) - F^{*} + \dfrac{1}{2} \|x_{k}-\bar{x}_{k}\|^{2}_{1}  \right). \label{eq:6}
    \vspace{-0.2cm}
	\end{align} 
	
	\noindent Combining \eqref{eq:5} and \eqref{eq:6}, we obtain 
    \vspace{-0.2cm}
	\begin{align*}
	&\dfrac{1}{2}\mathbb{E}\left[ \|x_{k+1} - \bar{x}_{k+1}\|^2_{1} 	\right] + \mathbb{E}\left[ F_{\gamma}(x_{k+1}) - F^{*} \right] \nonumber \\ 
	&\leq \left( 1 - \dfrac{\bar\kappa}{N(1+\bar\kappa)}\right) \left( \dfrac{1}{2}\mathbb{E}\left[ \|x_{k}-\bar{x}_{k}\|^{2}_{1}\right]  + \mathbb{E}\left[F_{\gamma}(x_{k}) - F^{*}\right] \right). \vspace{-0.2cm}
	\end{align*}
	
	\noindent  Unrolling the recurrence we obtain the statement. 
 
 \medskip 
 
 \noindent Then, consider $\bar q \in [1,2)$
    and $F_{\gamma}(x_{k}) - F^{*} \leq 1$, we have
    \vspace{-0.2cm}
	\begin{align}
	F_{\gamma}(x_{k}) - F^{*} &= \dfrac{\bar\kappa}{1+\bar\kappa} \left( F_{\gamma}(x_{k}) - F^{*} \right) + \left( 1 - \dfrac{\bar\kappa}{1+ \bar\kappa}\right)  \left( F_{\gamma}(x_{k}) - F^{*} \right)\nonumber \\
	&\geq \dfrac{\bar\kappa}{1+\bar\kappa} \left( F_{\gamma}(x_{k}) - F^{*} \right)  + \left( 1 - \dfrac{\bar\kappa}{1+\bar\kappa}\right) \left( F_{\gamma}(x_{k}) - F^{*} \right)^{\frac{2}{\bar{q}}}. \nonumber \vspace{-0.2cm}
	\end{align} 
	
	\noindent On other hand, from Assumption \ref{ass2}, we have 
    \vspace{-0.2cm}
	\begin{equation*}
	\dfrac{\bar{\kappa}^{\frac{2}{\bar q}} }{2} \|x_{k} - \bar{x}_{k} \|_{1}^2 \leq \left( F_{\gamma}(x_{k}) - F^{*} \right)^{\frac{2}{\bar q}}. \vspace{-0.2cm}
	\end{equation*}
	
	\noindent Combining the two inequalities above, we get
    \vspace{-0.2cm}
	\begin{align*}
	F_{\gamma}(x_{k}) - F^{*} &\geq \dfrac{\bar\kappa}{1+ \bar\kappa} \left( F_{\gamma}(x_{k}) - F^{*} \right)  + \left( 1 - \dfrac{\bar\kappa}{1+\bar\kappa}\right) \dfrac{\bar{\kappa}^{\frac{2}{\bar q}} }{2}  \|x_{k} - \bar{x}_{k} \|_{1}^2  \\
	&\geq \dfrac{\eta}{  (1+ \bar\kappa)} \left( F_{\gamma}(x_{k}) - F^{*}   +  \dfrac{1}{2}\|x_{k} - \bar{x}_{k} \|_{1}^2 \right). \vspace{-0.2cm}
	\end{align*}
	
	\noindent Hence, from \eqref{eq:5} and the inequality above, we get
    \vspace{-0.2cm}
	\begin{align}
	&\dfrac{1}{2}\mathbb{E}\left[ \|x_{k+1} - \bar{x}_{k+1}\|^2_{1} 	\right] + \mathbb{E}\left[ F_{\gamma}(x_{k+1}) - F^{*} \right] \nonumber \\ 
	&\leq \left( 1 - \dfrac{\eta}{ N(1+\bar\kappa)}\right) \left( \dfrac{1}{2}\mathbb{E}\left[ \|x_{k}-\bar{x}_{k}\|^{2}_{1}\right]  + \mathbb{E}\left[F_{\gamma}(x_{k}) - F^{*}\right]  \right). \label{eq:20}
    \vspace{-0.2cm}
	\end{align}
	
	\noindent Finally, for $\bar q\in [1,2)$ and $F_{\gamma}(x_{k}) - F^{*} > 1$, using Assumption \ref{ass2}, we get
    \vspace{-0.2cm}
	\begin{align*}
	F_{\gamma}(x_{k}) - F^{*} &= \dfrac{\bar\kappa}{1+\bar\kappa} \left( F_{\gamma}(x_{k}) - F^{*} \right) + \left( 1 - \dfrac{\bar\kappa}{1+\bar\kappa}\right)  \left( F_{\gamma}(x_{k}) - F^{*} \right)\nonumber \\
	&\geq \dfrac{\bar\kappa}{1+\bar\kappa} \left( F_{\gamma}(x_{k}) - F^{*} \right)^{\frac{\bar q}{2}}  + \left( 1 - \dfrac{\bar \kappa}{1+ \bar\kappa}\right) \dfrac{\bar\kappa}{2^{\frac{\bar q}{2}}} \|x_{k}-\bar{x}_{k}\|^{\bar q}_{1}  \nonumber \\
	& = \dfrac{\bar\kappa}{1+\bar\kappa} \left(  \left( F_{\gamma}(x_{k}) - F^{*} \right)^{\frac{\bar q}{2}} +  \dfrac{1}{2^{\frac{\bar q}{2}}}\|x_{k}-\bar{x}_{k}\|^{\bar q}_{1} \right). \vspace{-0.2cm}
	\end{align*} 
	
	\noindent Using the following inequality 
    \vspace{-0.2cm}
	\begin{align*}
	(a + b)^{p} \leq a^{p} +  b^{p} \quad \text{for} \quad a,b \geq 0 \quad  0 <  p < 1,
    \vspace{-0.2cm}
	\end{align*}
	\noindent  in the previous relation for  $p ={\bar q}/2$, we have
    \vspace{-0.2cm}
	\begin{align}
	F_{\gamma}(x_{k}) - F^{*} 
	\geq \dfrac{\bar\kappa}{1+\bar\kappa} \left(  F_{\gamma}(x_{k}) - F^{*}  + \dfrac{1}{2}  \|x_{k}-\bar{x}_{k}\|^{2}_{1} \right)^{\frac{\bar q}{2}}.  \label{eq:34} \vspace{-0.2cm}
	\end{align} 
	
	\noindent Moreover, since $\bar q\in [1,2)$, using \eqref{eq:33} and the fact that $F_{\gamma}(x_{k}) \leq F_{\gamma}(x_{0})$ for all $k \geq 0$ (see (2.4) in \cite{Nes:10}),  we obtain
    \vspace{-0.2cm}
	\begin{align*}
	&\left(  F_{\gamma}(x_{k}) - F^{*}  + \dfrac{1}{2}  \|x_{k}-\bar{x}_{k}\|^{2}_{1} \right)^{\frac{\bar q}{2}} \\
	&= \left(  F_{\gamma}(x_{k}) - F^{*}  + \dfrac{1}{2}  \|x_{k}-\bar{x}_{k}\|^{2}_{1} \right)\left(  F_{\gamma}(x_{k}) - F^{*}  +  \dfrac{1}{2} \|x_{k}-\bar{x}_{k}\|^{2}_{1} \right)^{\frac{\bar q}{2}-1} \\
	&\geq \left(  F_{\gamma}(x_{k}) - F^{*}  + \dfrac{1}{2}  \|x_{k}-\bar{x}_{k}\|^{2}_{1} \right) \left(  F_{\gamma}(x_{0}) - F^{*}  +  R \right)^{\frac{\bar q}{2}-1}. \vspace{-0.2cm}
	\end{align*}
	
	\noindent Hence, from \eqref{eq:33}, \eqref{eq:34} and last inequality, we get
    \vspace{-0.2cm}
	\begin{align*}
	F_{\gamma}(x_{k}) - F^{*} 
	\geq \dfrac{\bar\kappa \Delta_{0}^{ \frac{\bar{q}-2}{2}       }  }{1+\bar\kappa} \left(  F_{\gamma}(x_{k}) - F^{*}  + \dfrac{1}{2}  \|x_{k}-\bar{x}_{k}\|^{2}_{1} \right). \vspace{-0.2cm}
	\end{align*}

	\noindent Combining \eqref{eq:5} and the inequality above, we have	
    \vspace{-0.2cm}
	\begin{align}
	&\mathbb{E}\left[ \|x_{k+1} - \bar{x}_{k+1}\|^2_{1} 	\right] + \mathbb{E}\left[ F_{\gamma}(x_{k+1}) - F^{*} \right] \nonumber \\ 
	&\leq \left( 1 - \dfrac{ \bar\kappa \Delta_{0}^{ \frac{\bar{q}-2}{2}  } }{N(1+\bar\kappa)}\right) \left( \mathbb{E}\left[ \|x_{k}-\bar{x}_{k}\|^{2}_{1}\right]  + \mathbb{E}\left[F_{\gamma}(x_{k}) - F^{*}\right]  \right). \label{eq:21} \vspace{-0.2cm}
	\end{align}
	
	\noindent Hence, from \eqref{eq:20} and \eqref{eq:21}, we get for all $k\geq0$ that 
    \vspace{-0.2cm}
	\begin{align*}
	\mathbb{E}\left[ \|x_{k+1} - \bar{x}_{k+1}\|^2_{1} 	\right] + \mathbb{E}\left[ F_{\gamma}(x_{k+1}) - F^{*} \right]  \leq C_1 \left( \mathbb{E}\left[ \|x_{k}-\bar{x}_{k}\|^{2}_{1}\right]  + \mathbb{E}\left[F_{\gamma}(x_{k}) - F^{*}\right]   \right). \vspace{-0.2cm}
	\end{align*}
	\noindent Unrolling this relation,  the statement follows. 
\end{proof}

\noindent Next theorem presents the convergence rate for the original function

\begin{theorem}
	\label{theo:qdif2F}
	Let Assumptions \ref{ass1} and \ref{ass2} hold with $\mathcal{C}$ being the identity operator. Assuming $i_{k}$ is chosen uniformly at random, then the iterates of the Algorithm 1 satisfy
	\vspace{-0.5cm}
    \begin{align}
	\mathbb{E}\left[ \|x_{k} - \bar{x}_{k}\|^2_{1} 	\right] +  \mathbb{E}\left[ F(\mathcal{B}(x_{k})) -  F^{*}\right] \leq C_1^{k} \left(   \|x_{0}- \bar{x}_{0}\|^{2}_{1}  +    F_{\gamma}(x_{0}) - F^{*}\right) + \gamma D . \vspace{-0.2cm} \label{eq:36}
	\end{align} 
\end{theorem}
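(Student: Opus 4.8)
This statement is the $q$-growth counterpart of Theorem \ref{theo:2}, and I would deduce it from Theorem \ref{theo:qdif2} in exactly the way Theorem \ref{theo:2} is deduced from Theorem \ref{theo:1}; the only extra ingredient is the left-hand half of Assumption \ref{ass1}[A.2]. So the proof should be short — essentially a corollary of the already established geometric decay of the Lyapunov quantity in Theorem \ref{theo:qdif2}.

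\noindent Concretely, I would proceed as follows. First, from Assumption \ref{ass1}[A.2] (recall that in the present setting $\mathcal{C}$ is the identity, and $\mathcal{B}$ is the relevant prox-type operator, so the inequality $F(\mathcal{B}(x))-\gamma D\le F_\gamma(x)$ holds on $\operatorname{dom}F$, hence along the trajectory), I would write $F(\mathcal{B}(x_k)) - F^{*} \le (F_\gamma(x_k) - F^{*}) + \gamma D$. Taking expectation over the sampled indices $i_0,\dots,i_{k-1}$ gives $\mathbb{E}[F(\mathcal{B}(x_k))] - F^{*} \le \mathbb{E}[F_\gamma(x_k) - F^{*}] + \gamma D$. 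Next, I would add $\mathbb{E}[\|x_k - \bar x_k\|^2_1]$ to both sides, so that the left-hand side becomes precisely the quantity in the statement, and then bound the resulting $\mathbb{E}[\|x_k - \bar x_k\|^2_1] + \mathbb{E}[F_\gamma(x_k) - F^{*}]$ by the geometrically decaying Lyapunov quantity $C_1^{k}\big(\|x_0 - \bar x_0\|^2_1 + F_\gamma(x_0) - F^{*}\big)$ provided by Theorem \ref{theo:qdif2}. Combining the two estimates yields \eqref{eq:36}.

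\noindent I do not expect a genuine obstacle here: the result is essentially a one-line consequence of Theorem \ref{theo:qdif2} together with Assumption \ref{ass1}[A.2], in complete parallel with the passage from Theorem \ref{theo:1} to Theorem \ref{theo:2}. The only step that calls for a little care is the normalisation of the Lyapunov functional — reconciling the factor $\tfrac12$ that multiplies $\|x_k - \bar x_k\|^2_1$ inside Theorem \ref{theo:qdif2} with the form used in \eqref{eq:36} — which is handled using $0 < C_1 < 1$ together with the nonnegativity of the distance term $\|x_k - \bar x_k\|^2_1$.
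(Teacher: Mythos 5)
Your proposal is correct and follows exactly the paper's own argument: apply Assumption \ref{ass1}[A.2] to bound $\mathbb{E}[F(\mathcal{B}(x_k))]$ by $\mathbb{E}[F_\gamma(x_k)]+\gamma D$ and then invoke the geometric decay from Theorem \ref{theo:qdif2}. Your closing remark about reconciling the factor $\tfrac12$ in the Lyapunov functional of Theorem \ref{theo:qdif2} with the unnormalised form in \eqref{eq:36} is a fair observation — the paper silently absorbs this constant as well — but it does not affect the validity of the approach.
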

\begin{proof}
	By Assumption \ref{ass1}[A.2] we have $\mathbb{E}[F(\mathcal{B}(x_{k})] \leq \mathbb{E}[F_{\gamma}(x_{k})] + \gamma D$. Using Theorem \ref{theo:qdif2}, the result  follows. 
\end{proof}

\noindent Recall that, $\mathcal{C}$ is identity operator for the Moreau envelope, Forward-Backward envelope and Nesterov's smoothing. Hence, in these cases the complexity results are:\\
%\begin{itemize}
%	\item 
(i) For Moreau and Forward-Backward envelopes we have $D=0$. Assume that the distances between the points in the level set of $F_{\gamma}$ and their projection onto the optimal set of the original function are bounded. Then,  if $F$ satisfies a $q$-growth condition  with $q\in[1,2]$, from Corollary \ref{cor:1} it follows that also the smooth approximations inherits such a condition, and consequently we get linear convergence, i.e., $\mathcal{O} \left(\ln \left( 1 /\epsilon   \right)   \right)$.  \\
 	\noindent (ii) For Nesterov's smoothing, we need to take $\gamma = \mathcal{O}\left(\dfrac{\epsilon}{2D}   \right)$. Note that from Corollary \ref{cor:1}, if $F_{\gamma}(x_{k-1}) - F^{*} \geq \gamma D$, then $x_{k-1} \in \hat{X}$, where $\hat{X}$ is defined in \eqref{eq:42}. Hence, Assumption \ref{ass2} and
     \eqref{eq:36} hold. On other hand, if $F_{\gamma}(x_{\hat{k}}) - F^{*} < \gamma D$ for some $\hat{k}>0$, from \eqref{ineq2:NS} and \eqref{eq:dec} , we get for all $k \geq \hat{k}$ 
     \vspace{-0.2cm}
	\begin{align*}
	F(x_{k}) - F^{*} \leq F_{\gamma}(x_{k}) - F^{*} + \gamma D \leq F_{\gamma}(x_{\hat{k}}) - F^{*} + \gamma D < 2 \gamma D \leq \mathcal{O}\left(\epsilon \right). \vspace{-0.2cm}
	\end{align*}
	Since $\bar{\kappa} = \mathcal{O}(\gamma^{\frac{\bar q}{2}}   ) = \mathcal{O}(\epsilon^{\frac{\bar q}{2}})$, then in the worst case we have the constant $C_{1} = \mathcal{O} \left(1 - \dfrac{\epsilon}{N(1+ \epsilon)}\right)$ and then the convergence rate is of order 
    $\mathcal{O} \left( \dfrac{1}{\epsilon} \ln \left( \dfrac{1}{\epsilon}   \right)   \right)$.
	
%%%%%%%%%%%%%%%%%%%%%%%

\subsection{Restart accelerated coordinate descent: $q$-growth case}
\noindent In this section we consider Algorithm 2 with $\alpha=0$, i.e., $i_{k}$ is chosen uniformly at random.
\noindent For the sequence $A_k$, we have (see Theorem 1 in \cite{NesSti:17}):
\vspace{-0.2cm}
\begin{equation}
A_k \geq \dfrac{k^2}{4N^2}. \label{eq:15}
\vspace{-0.2cm}
\end{equation}

\noindent Next result was derived  in Theorem 1  in \cite{NesSti:17}. The result was proved for $x_{*} \in \bar{X}^{*}_{\gamma}$, where $\bar{X}^{*}_{\gamma}$ is the set of optimal points of the smooth problem  \eqref{eq:prob2}. However,  the result is also valid for $x^{*} \in X^{*}$, where $X^{*}$ is the set of optimal points of the original problem \eqref{eq:prob}: 
\begin{align}
\label{eq:theoNSACC}
	2 A_{k} \mathbb{E}[F_{\gamma}(x_{k}) - F_{\gamma}(x^{*})] + B_{k} \mathbb{E}[\|\nu_{k} - x^{*}\|_{1}^2] \leq \|x_0 - x^{*}\|_1^2. \vspace{-0.2cm}
	\end{align}

\noindent Let us also introduce a restarting variant of  Algorithm 2 (see  \cite{FerQu:20}):

\begin{center}
	\noindent\fbox{%
		\parbox{8.5 cm}{%
			\textbf{Algorithm 3}:\\
			Choose $x_{0} \in \text{dom} F_{\gamma}$ and set $\tilde{x}_{0} = x_0.$ \\
			Choose restart periods $\{K_0,\cdots,K_r,\cdots\} \subset \mathbb{N}$.\\
			For $r\geq 0$, iterate:
			\begin{enumerate}
				\item $\hat{x}_{r+1} = \text{Algorithm 2} (F_{\gamma},\tilde{x}_r, K_r)             $
				\item $ \tilde{x}_{r+1} = \hat{x}_{r+1} \mathbbm{1}_{F_{\gamma}( \hat{x}_{r+1}) \leq F_{\gamma}(\tilde{x}_{r})} + \tilde{x}_r \mathbbm{1}_{F_{\gamma}(\hat{x}_{r+1})>F_{\gamma}(\tilde{x}_{r})}.$
			\end{enumerate}
	}}
\end{center}

\medskip 

\noindent Following similar arguments as in \cite{FerQu:20}, we get a condition for restarting $x_{k}$, when the original problem has a $q$-growth property,
with $1 \leq q \leq  2$.

\begin{lemma}
	\label{lem:cond}
 Let $(x_{k})_{k\geq0}$ be generated by Algorithm 2 and Assumptions \ref{ass1} and \ref{ass2} hold with $\mathcal{C}$ being the identity operator.  Denote:
    \vspace{-0.2cm}
	\begin{equation*}
	\tilde{x}_{k} = x_{k} \mathbbm{1}_{F_{\gamma}(x_{k}) \leq F_{\gamma}(x_{0})} + x_{0} \mathbbm{1}_{F_{\gamma}(x_{k})>F_{\gamma}(x_{0})}. \vspace{-0.2cm}
	\end{equation*}
	\noindent We have
    \vspace{-0.2cm}
	\begin{align}
	\mathbb{E}[F_{\gamma}(\tilde{x}_{k}) - F^{*}] \leq \dfrac{1}{ A_k \bar{\kappa}^{\frac{2}{\bar q}}}\left(F_{\gamma}(x_{0})-F^{*} \right)^{\frac{2-\bar {q}}{\bar q}} \left(F_{\gamma}(x_{0})-F^{*} \right).  \label{eq:7} \vspace{-0.2cm}
	\end{align}
	
	\noindent Moreover, given $\alpha \in (0,1)$, if 
    \vspace{-0.2cm}
	\begin{align}
	k \geq \sqrt{\dfrac{4 N^2 \left(F(  x_{0})-F^{*} \right)^{\frac{2-\bar{q}}{\bar{q}}} }{\bar{\kappa}^{\frac{2}{q}} \alpha  } },  \label{eq:8} \vspace{-0.2cm}
	\end{align}
	
	\noindent then $\mathbb{E}[F_{\gamma}(\tilde{x}_{k}) - F^{*}] \leq \alpha\left( F_{\gamma}(x_0) - F^* \right) $.
	
\end{lemma}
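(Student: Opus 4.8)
The plan is to combine the accelerated estimate \eqref{eq:theoNSACC} with the $q$-growth inequality of Assumption \ref{ass2} evaluated at the starting point $x_0$, and then transfer the resulting guarantee from $x_k$ to the monotone iterate $\tilde x_k$.

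First I would apply \eqref{eq:theoNSACC} with $x^{*}=\bar x_0=[x_0]_{X^{*}}\in X^{*}$. Since $B_k\geq B_0=1>0$ (because $\sigma_{1-\alpha}\geq 0$ and $a_{k+1}>0$), the term $B_k\mathbb{E}[\|\nu_k-\bar x_0\|_1^2]$ is nonnegative and can be dropped; moreover Assumption \ref{ass1}[A.2] with $\mathcal{C}$ the identity operator gives $F_\gamma(\bar x_0)\leq F(\bar x_0)=F^{*}$, so
\[
2A_k\,\mathbb{E}[F_\gamma(x_k)-F^{*}] \leq 2A_k\,\mathbb{E}[F_\gamma(x_k)-F_\gamma(\bar x_0)] \leq \|x_0-\bar x_0\|_1^2 .
\]
Next I would use Assumption \ref{ass2} at $x=x_0$ (which belongs to $\bar X$ in every case treated here, and in particular forces $F_\gamma(x_0)-F^{*}\geq 0$), namely $\tfrac{\bar\kappa}{2^{\bar q/2}}\|x_0-\bar x_0\|_1^{\bar q}\leq F_\gamma(x_0)-F^{*}$, which rearranges to $\|x_0-\bar x_0\|_1^2 \leq \tfrac{2}{\bar\kappa^{2/\bar q}}\bigl(F_\gamma(x_0)-F^{*}\bigr)^{2/\bar q}$. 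Combining the last two displays and cancelling the factor $2$ yields
\[
\mathbb{E}[F_\gamma(x_k)-F^{*}] \leq \frac{1}{A_k\,\bar\kappa^{2/\bar q}}\bigl(F_\gamma(x_0)-F^{*}\bigr)^{2/\bar q}.
\]

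To reach $\tilde x_k$, note that by its definition $F_\gamma(\tilde x_k)=\min\{F_\gamma(x_k),F_\gamma(x_0)\}\leq F_\gamma(x_k)$ pointwise, so taking expectations and writing $2/\bar q=(2-\bar q)/\bar q+1$ gives exactly \eqref{eq:7}. For the second claim I would insert the bound $A_k\geq k^2/(4N^2)$ from \eqref{eq:15} into \eqref{eq:7}, obtaining $\mathbb{E}[F_\gamma(\tilde x_k)-F^{*}] \leq \tfrac{4N^2}{k^2\bar\kappa^{2/\bar q}}\bigl(F_\gamma(x_0)-F^{*}\bigr)^{(2-\bar q)/\bar q}\bigl(F_\gamma(x_0)-F^{*}\bigr)$; requiring the coefficient of $(F_\gamma(x_0)-F^{*})$ to be at most $\alpha$ is equivalent to $k\geq\sqrt{4N^2\bigl(F_\gamma(x_0)-F^{*}\bigr)^{(2-\bar q)/\bar q}/(\bar\kappa^{2/\bar q}\alpha)}$, and since $F_\gamma(x_0)\leq F(x_0)$ by Assumption \ref{ass1}[A.2] and $(2-\bar q)/\bar q\geq 0$, this is guaranteed by condition \eqref{eq:8}.

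The only delicate part is ensuring that the original optimal value $F^{*}$ (rather than the smoothed value $F_\gamma^{*}$) appears in every inequality and that all quantities raised to fractional powers are nonnegative; this is precisely what Assumption \ref{ass1}[A.2] with $\mathcal{C}$ the identity operator (giving $F_\gamma(\bar x_0)\leq F^{*}$ and $F_\gamma(x_0)\leq F(x_0)$) together with Assumption \ref{ass2} applied at $x_0$ provide. Everything else is bookkeeping with the exponents $2/\bar q$ and $(2-\bar q)/\bar q$ and the uniform lower bound $A_k\geq k^2/(4N^2)$.
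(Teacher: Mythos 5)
Your argument is correct and follows essentially the same route as the paper's proof: both start from \eqref{eq:theoNSACC} with $x^{*}=\bar x_0$, use Assumption \ref{ass1}[A.2] to replace $F_\gamma(\bar x_0)$ by $F^{*}$, invoke Assumption \ref{ass2} at $x_0$ to bound $\|x_0-\bar x_0\|_1^2$, pass to $\tilde x_k$ via $F_\gamma(\tilde x_k)\leq F_\gamma(x_k)$ a.s., and finish with $A_k\geq k^2/(4N^2)$ together with $F_\gamma(x_0)\leq F(x_0)$ to reconcile \eqref{eq:8}. No gaps.
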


\begin{proof}
	From \eqref{eq:theoNSACC}, Assumption \ref{ass1}[A2] with $\mathcal{C}$ being the identity operator and Assumption \ref{ass2}, we have 
    \vspace{-0.2cm}
	\begin{align*}
	\mathbb{E}[F_{\gamma}(x_{k}) - F^{*}] &\leq \mathbb{E}[F_{\gamma}(x_{k}) - F_{\gamma}(\bar{x}_{0})] \leq    \dfrac{1}{2 A_k} \|x_{0} - \bar{x}_{0}  \|^2_{1} \\
	&\leq \dfrac{1}{ A_k \bar{\kappa}^{\frac{2}{\bar q}}}\left(F_{\gamma}(x_{0})-F^{*} \right)^{\frac{2}{\bar q}}  = \dfrac{1}{ A_k \bar{\kappa}^{\frac{2}{\bar q}}}\left(F_{\gamma}(x_{0})-F^{*} \right)^{\frac{2-\bar{q}}{\bar q}} \left(F_{\gamma}(x_{0})-F^{*}  \right), \vspace{-0.2cm}
	\end{align*}
 
    \noindent where in the first inequality we used the fact that $F(\bar{x}_{0}) = F^{*}$. Additionally, with probability one $F_{\gamma}(\tilde{x}_{k}) - F^{*} \leq F_{\gamma}(x_{k}) - F^{*}$, which yields \eqref{eq:7}. Moreover, from \eqref{eq:8} and Assumption \ref{ass1}[A2] with $\mathcal{C}$ being the identity,  we get
    \vspace{-0.2cm}
	\begin{equation*}
	\dfrac{4 N^2 }{k^2}  \left( \dfrac{ \left(F_{\gamma}(x_{0})-F^{*} \right)^{\frac{2-\bar{q}}{\bar q}}  }{ \bar{\kappa}^{\frac{2}{\bar q}} } \right) \leq \alpha. \vspace{-0.2cm}
	\end{equation*}
	
	\noindent Hence, from \eqref{eq:15}, \eqref{eq:7} and the inequality above, we obtain
    \vspace{-0.2cm}
	\begin{align*}
	\mathbb{E}[ F_{\gamma}(\tilde{x}_{k}) - F^{*}] \leq \alpha \left( F_{\gamma}(x_{0}) - F^{*}\right). \vspace{-0.5cm}
	\end{align*}
This proves the second statement. 
\end{proof}

\noindent In the next corollary we derive the convergence rate for the restart algorithm when a fixed restart period is considered. 

\begin{corollary} 
\label{cor:rest}
	Let Assumptions \ref{ass1} and \ref{ass2} hold, with $\mathcal{C}$ being the identity operator.  Denote 
    \vspace{-0.2cm}
	\begin{align}
	K(\alpha) = \left \lceil \sqrt{\dfrac{4 N^2 \left(F(x_{0})-F^{*} \right)^{\frac{2-\bar{q}}{\bar q}} }{\bar{\kappa}^{\frac{2}{\bar q}} \alpha  } } \right\rceil. \label{eq:25} \vspace{-0.2cm}
	\end{align}
	If the restart periods $\{K_{0}, \cdots,K_{r}\}$ are all equal to $K(\alpha)$, then the iterates of Algorithm 3 satisfy
    \begin{equation}
    \label{eq:50}
	\mathbb{E}[F_{\gamma}(\tilde{x}_{r})     - F^{*}] \leq \alpha^r\left( F_{\gamma}(x_{0}) - F^{*}\right) 
	\end{equation}
    \noindent and 
	\begin{equation}
    \label{eq:37}
	\mathbb{E}[F(\mathcal{B}(\tilde{x}_{r}))     - F^{*}] \leq \alpha^r\left( F_{\gamma}(x_{0}) - F^{*}\right)  + \gamma D. 
	\end{equation}
\end{corollary}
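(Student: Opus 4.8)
The plan is to prove the geometric decay \eqref{eq:50} by induction on the restart counter $r$, invoking Lemma \ref{lem:cond} at each outer iteration, and then to read off \eqref{eq:37} from Assumption \ref{ass1}[A.2] exactly as in the proofs of Theorems \ref{theo:2} and \ref{theo:4}.

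For the base case $r=0$ we have $\tilde{x}_{0}=x_{0}$, so the inequality is an equality. For the inductive step, fix $r\geq 0$ and condition on $\tilde{x}_{r}$. By construction $\hat{x}_{r+1}$ is the $K_{r}$-th iterate of Algorithm 2 started from $\tilde{x}_{r}$ (with fresh randomness) and $\tilde{x}_{r+1}$ is obtained from $\hat{x}_{r+1}$ and $\tilde{x}_{r}$ by the selection rule in step 2 of Algorithm 3; this is precisely the quantity denoted $\tilde{x}_{k}$ in Lemma \ref{lem:cond} with $x_{0}\leftarrow\tilde{x}_{r}$ and $k\leftarrow K_{r}=K(\alpha)$. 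Hence the conditional form of \eqref{eq:7} gives
\[
\mathbb{E}\left[F_{\gamma}(\tilde{x}_{r+1})-F^{*}\mid\tilde{x}_{r}\right]\leq\frac{1}{A_{K_{r}}\bar{\kappa}^{\frac{2}{\bar q}}}\left(F_{\gamma}(\tilde{x}_{r})-F^{*}\right)^{\frac{2-\bar q}{\bar q}}\left(F_{\gamma}(\tilde{x}_{r})-F^{*}\right).
\]

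The key point is that the single fixed period $K_{r}=K(\alpha)$ makes the prefactor above at most $\alpha$ at \emph{every} restart. The selection rule forces the deterministic chain $F_{\gamma}(\tilde{x}_{r})\leq F_{\gamma}(\tilde{x}_{r-1})\leq\cdots\leq F_{\gamma}(\tilde{x}_{0})=F_{\gamma}(x_{0})$, and Assumption \ref{ass1}[A.2] with $\mathcal{C}$ the identity gives $F_{\gamma}(x_{0})\leq F(x_{0})$; since $\bar q\in[1,2]$ the exponent $(2-\bar q)/\bar q$ is nonnegative, so $\left(F_{\gamma}(\tilde{x}_{r})-F^{*}\right)^{\frac{2-\bar q}{\bar q}}\leq\left(F(x_{0})-F^{*}\right)^{\frac{2-\bar q}{\bar q}}$. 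Combining this with $A_{K_{r}}\geq K_{r}^{2}/(4N^{2})$ from \eqref{eq:15} and the definition \eqref{eq:25} of $K(\alpha)$ shows
\[
\frac{1}{A_{K_{r}}\bar{\kappa}^{\frac{2}{\bar q}}}\left(F_{\gamma}(\tilde{x}_{r})-F^{*}\right)^{\frac{2-\bar q}{\bar q}}\leq\frac{4N^{2}\left(F(x_{0})-F^{*}\right)^{\frac{2-\bar q}{\bar q}}}{K(\alpha)^{2}\,\bar{\kappa}^{\frac{2}{\bar q}}}\leq\alpha ,
\]
so that $\mathbb{E}[F_{\gamma}(\tilde{x}_{r+1})-F^{*}\mid\tilde{x}_{r}]\leq\alpha\left(F_{\gamma}(\tilde{x}_{r})-F^{*}\right)$ almost surely. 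Taking total expectation and applying the inductive hypothesis $\mathbb{E}[F_{\gamma}(\tilde{x}_{r})-F^{*}]\leq\alpha^{r}\left(F_{\gamma}(x_{0})-F^{*}\right)$ yields $\mathbb{E}[F_{\gamma}(\tilde{x}_{r+1})-F^{*}]\leq\alpha^{r+1}\left(F_{\gamma}(x_{0})-F^{*}\right)$, which closes the induction and establishes \eqref{eq:50}.

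Finally, for \eqref{eq:37}, Assumption \ref{ass1}[A.2] gives $F(\mathcal{B}(\tilde{x}_{r}))-\gamma D\leq F_{\gamma}(\tilde{x}_{r})$ pointwise; taking expectations, subtracting $F^{*}$ and using \eqref{eq:50} we obtain
\[
\mathbb{E}[F(\mathcal{B}(\tilde{x}_{r}))-F^{*}]\leq\mathbb{E}[F_{\gamma}(\tilde{x}_{r})-F^{*}]+\gamma D\leq\alpha^{r}\left(F_{\gamma}(x_{0})-F^{*}\right)+\gamma D .
\]
I expect the only genuinely delicate point to be the monotonicity/boundedness bookkeeping in the third paragraph: one must verify that a restart period calibrated once to the initial gap $F(x_{0})-F^{*}$ is large enough at all later restarts, which is exactly where the selection rule of Algorithm 3 and the sign of $(2-\bar q)/\bar q$ enter; everything else is a mechanical application of Lemma \ref{lem:cond} together with the inequality $F_{\gamma}(x_{0})\leq F(x_{0})$.
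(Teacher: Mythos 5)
Your proof is correct and follows essentially the same route as the paper: a recursive (inductive) application of Lemma \ref{lem:cond} at each restart, using the monotonicity $F_{\gamma}(\tilde{x}_{r})\leq F_{\gamma}(x_{0})\leq F(x_{0})$ and the nonnegativity of $(2-\bar q)/\bar q$ to justify that the single period $K(\alpha)$ suffices at every restart, followed by Assumption \ref{ass1}[A.2] for the second inequality. The paper's proof is just a terser statement of the same argument, so no further comparison is needed.
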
 

\begin{proof}
	By the definition of $\tilde{x}_{r}$, we know that for all $r$, $F(\tilde{x}_{r}) \leq F(x_{0})$. Using Lemma \ref{lem:cond} recursively, \eqref{eq:50} follows. From Assumption \ref{ass1}[A.2] we have $\mathbb{E}[F(\mathcal{B}(\tilde{x}_{r})] \leq \mathbb{E}[F_{\gamma}(\tilde{x}_{r})] + \gamma D$. Using \eqref{eq:50}, the other result also follows. 
\end{proof}

\noindent Recall that $\mathcal{C}$ is identity operator for the Moreau envelope, Forward-Backward envelope and Nesterov's smoothing. Moreover, for the Moreau and Forward-Backward envelopes we obtain a linear rate for Algorithm 3, since $D=0$. On the other hand, in the Nesterov's smoothing we have $D = \bar{D}$. Hence, we need to take $\gamma = \mathcal{O}\left(\dfrac{\epsilon}{2D}   \right)$. Since $\bar{\kappa} = \mathcal{O}(\gamma^{\frac{\bar q}{2}})$, then we have 
$K(\alpha) = \dfrac{1}{ \sqrt{\epsilon}}$. This implies that the convergence rate of  Algorithm 3 in the Nesterov's smoothing is of order $\mathcal{O} \left( \dfrac{1}{ \sqrt{\epsilon}} \ln \left(\dfrac{1}{\epsilon}\right)\right)$. 

%Note that, from Corollary \ref{cor:1}, if $F_{\gamma}(\tilde{x}_{r-1}) - F^{*} \geq \gamma D$, then $\tilde{x}_{r-1} \in \hat{X}$, hence Assumption \ref{ass2} and
%\eqref{eq:37} hold. On other hand, if $F_{\gamma}(\tilde{x}_{\hat{r}}) - F^{*} < \gamma D$ for some $\hat{r}>0$, from \eqref{ineq2:NS} and the definition of $\tilde{x}_r$ in the Algorithm 3 , we get for all $r \geq \hat{r}$ 
%\vspace{-0.2cm}
%\begin{align*}
%F(\tilde{x}_r) - F^{*} \leq F_{\gamma}(\tilde{x}_r) - F^{*} + \gamma D \leq F_{\gamma}(\tilde{x}_{\hat{r}}) - F^{*} + \gamma D < 2 \gamma D \leq \mathcal{O}\left(\epsilon \right), \vspace{-0.2cm}
%\end{align*}
%\noindent where in the last inequality, we used the fact that $\gamma = \mathcal{O}\left(\dfrac{\epsilon}{2D}   \right)$.

\noindent Next we present some conditions to choose the restart periods.

\noindent (i) If $F^{*}\geq 0$, then we can choose the restart periods as
    \vspace{-0.2cm}
	\begin{equation*}
	K(\alpha) = \sqrt{\dfrac{4 N^2 \left(F(x_{0}) \right)^{\frac{2-\bar{q}}{\bar q}} }{\bar{\kappa}^{\frac{2}{\bar q}} \alpha  } }. \vspace{-0.2cm}
	\end{equation*}
 
\noindent 	(ii) If $\bar{\kappa}_{\min}$ is a lower bound for $\bar{\kappa}$, then  we can choose the restart periods \cite{FerQu:20} 
	\begin{equation*}
	K(\alpha) = \sqrt{\dfrac{4 N^2 \left(F(x_{0})-F^{*}  \right)^{\frac{2-\bar{q}}{\bar q}} }{\bar{\kappa}_{\min}^{\frac{2}{\bar q}} \alpha  } }. 
	\end{equation*}
	
\noindent 	(iii) If Assumption \ref{ass:qgrowth} holds with $q\in [1,2]$, then for Moreau envelope, Forward Backward envelope and Nesterov's smoothing, from Corollary \ref{cor:1} we can conclude that if
the distances between the points in the level set of $F_{\gamma}$ and their projections onto the optimal set of the original function are bounded,
then we have that $F_{\gamma}$ satisfies Assumption \ref{ass2} with $\bar{q}=2$. Hence, $K(\alpha)$ becomes
\vspace{-0.2cm}
\begin{align*}
	K(\alpha) = \left \lceil \sqrt{\dfrac{4 N^2}{\bar{\kappa} \alpha  } } \right\rceil. \vspace{-0.5cm}
	\end{align*}
\noindent As showed in \cite{FerQu:20}, if we do not have any knowledge of $\bar \kappa$, the restart periods can be chosen  as follows:
\begin{assumption} Define: 
	\label{assum}
%	\begin{enumerate}
		$K_{0} \in \mathbb{N}^*$; 
		 $K_{2^{j} - 1} = 2^{j} K_0$ for all $j \in \mathbb{N}$;
		 $|\{0\leq r < 2^{J} - 1 | K_{r} = 2^{j}K_{0}\} | = 2^{J-1-j}$ for all $j \in \{0,1,\cdots,J-1\}$ and $J \in \mathbb{N}$.
%	\end{enumerate}
\end{assumption}
\medskip
\noindent Although next theorem was derived in \cite{FerQu:20} for a different algorithm, it can be also applied for Algorithm 2. Consider $\alpha = \exp(-2)$, then from \eqref{eq:25} we have 
\vspace{-0.2cm}
\begin{align}
\label{eq:27}
K^{*}= K(e^{-2}) = \left \lceil e\sqrt{\dfrac{4 N^2 \left(F(x_{0})-F^{*}    \right)^{\frac{2-\bar{q}}{\bar q}} }{\bar{\kappa}^{\frac{2}{\bar q}}  } } \right\rceil. \vspace{-0.2cm}
\end{align}

\begin{theorem}
	\label{theo:rest}
	Let Assumptions \ref{ass1}, \ref{ass2} and \ref{assum} hold with $\mathcal{C}$ being the identity operator. Then, Algorithm 3 satisfies
    \vspace{-0.2cm}
	\begin{equation}
	\label{eq:28}
	\mathbb{E}[F_{\gamma}(\tilde{x}_{2^J -1}  ) - F^{*}] \leq \bar{\epsilon} \quad \text{and} \quad \mathbb{E}[F(B(\tilde{x}_{2^J -1} )) - F^{*}] \leq \bar{\epsilon} + \gamma D, \vspace{-0.2cm}
	\end{equation}
	where $J= \left\lceil \max(\log_2(K^{*}/K_0), 0) \right\rceil + \left \lceil  \log_2(\ln(\delta_0/ \bar\epsilon)/2) \right\rceil$, 
	$\delta_{0} = F_{\gamma}(x_{0}) - F^{*}$ and $K^{*}$ is defined as in \eqref{eq:27}. To obtain \eqref{eq:28}, the total number of iterations of  Algorithm 2, $K_{0}+\cdots+K_{2^J-1}$, is bounded by 
    \vspace{-0.2cm}
	\begin{equation*}
	\left( \left\lceil \max(\log_2(K^{*}/K_0), 0) \right\rceil + \left \lceil  \log_2(\ln(\delta_0/ \bar\epsilon)) \right\rceil + 1 \right) \ln(\delta_0/\bar\epsilon)\max(K^{*},K_0). \vspace{-0.2cm}
	\end{equation*}
\end{theorem}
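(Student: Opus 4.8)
The plan is to reduce the statement to the adaptive–restart analysis of \cite{FerQu:20} by isolating its only algorithm–specific ingredient and verifying that Algorithm~2 supplies it through Lemma~\ref{lem:cond}. The abstract restart wrapper of \cite{FerQu:20} needs two facts about the inner routine: (a) the selection in Step~2 of Algorithm~3 guarantees $F_{\gamma}(\tilde x_{r+1})\le F_{\gamma}(\tilde x_{r})$ almost surely, hence $F(\tilde x_{r})\le F(x_{0})$ for every $r$; and (b) a \emph{restart guarantee}: there is a threshold $K^{*}$ (depending on the unknown $\bar\kappa$, and given explicitly by \eqref{eq:27}) such that, conditionally on the current point, a restart with period $K_{r}\ge K^{*}$ shrinks the expected optimality gap of $F_{\gamma}$ by a fixed factor $e^{-2}$. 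Fact (b) is exactly Lemma~\ref{lem:cond} with $\alpha=e^{-2}$: applying that lemma with starting point $\tilde x_{r}$ instead of $x_{0}$ and using $F(\tilde x_{r})\le F(x_{0})$ together with $(2-\bar q)/\bar q\ge 0$ to bound $\left(F(\tilde x_{r})-F^{*}\right)^{(2-\bar q)/\bar q}\le\left(F(x_{0})-F^{*}\right)^{(2-\bar q)/\bar q}$ shows that $K_{r}\ge K(e^{-2})=K^{*}$ forces $\mathbb{E}[F_{\gamma}(\tilde x_{r+1})-F^{*}\mid\tilde x_{r}]\le e^{-2}\left(F_{\gamma}(\tilde x_{r})-F^{*}\right)$, while for $K_{r}<K^{*}$ the same conditional expectation is at most $F_{\gamma}(\tilde x_{r})-F^{*}$ by the monotone selection in (a).

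With (a)--(b) in hand I would run the combinatorial argument of \cite{FerQu:20}. Put $m=\lceil\max(\log_{2}(K^{*}/K_{0}),0)\rceil$, so $2^{m}K_{0}\ge K^{*}$ and every restart whose period equals $2^{j}K_{0}$ with $j\ge m$ is a ``good'' restart in the sense of (b). By Assumption~\ref{assum}, among the relevant restarts the period $2^{j}K_{0}$ occurs $2^{J-1-j}$ times, so the number of good restarts is $\sum_{j=m}^{J-1}2^{J-1-j}$, which is of order $2^{J-m}$; the choice $J=m+\lceil\log_{2}(\ln(\delta_{0}/\bar\epsilon)/2)\rceil$ makes this count at least $\tfrac12\ln(\delta_{0}/\bar\epsilon)$ (after the routine rounding accounting of \cite{FerQu:20}, which also uses the final period $2^{J}K_{0}\ge K^{*}$). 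Chaining the per-restart conditional estimates and taking total expectation via the tower rule (bad restarts contribute a factor $\le 1$, good ones a factor $e^{-2}$) gives
\begin{equation*}
\mathbb{E}[F_{\gamma}(\tilde x_{2^{J}-1})-F^{*}]\le e^{-2\cdot\frac12\ln(\delta_{0}/\bar\epsilon)}\,\delta_{0}=\bar\epsilon,
\end{equation*}
which is the first bound in \eqref{eq:28}. For the original objective, Assumption~\ref{ass1}[A.2] gives $F(\mathcal{B}(x))\le F_{\gamma}(x)+\gamma D$ for all $x$, hence $\mathbb{E}[F(\mathcal{B}(\tilde x_{2^{J}-1}))-F^{*}]\le\mathbb{E}[F_{\gamma}(\tilde x_{2^{J}-1})-F^{*}]+\gamma D\le\bar\epsilon+\gamma D$, the second bound in \eqref{eq:28}.

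For the iteration count I would simply sum the schedule. By Assumption~\ref{assum}, $\sum_{r}K_{r}=\sum_{j=0}^{J-1}2^{J-1-j}\cdot 2^{j}K_{0}$ (plus the last period), which is of order $J\,2^{J}K_{0}$. Substituting $2^{m}K_{0}\le 2\max(K^{*},K_{0})$ and $2^{J-m}\le\ln(\delta_{0}/\bar\epsilon)$ (from the definition of $J$) bounds $2^{J}K_{0}$ by $\mathcal{O}\!\left(\ln(\delta_{0}/\bar\epsilon)\max(K^{*},K_{0})\right)$, and $J\le\lceil\max(\log_{2}(K^{*}/K_{0}),0)\rceil+\lceil\log_{2}(\ln(\delta_{0}/\bar\epsilon))\rceil$; combining these yields the stated bound $\big(\lceil\max(\log_{2}(K^{*}/K_{0}),0)\rceil+\lceil\log_{2}(\ln(\delta_{0}/\bar\epsilon))\rceil+1\big)\ln(\delta_{0}/\bar\epsilon)\max(K^{*},K_{0})$.

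The main obstacle is the bookkeeping rather than any new idea: one must be careful that Lemma~\ref{lem:cond} is legitimately applicable at the head of every restart (this is precisely where the monotone selection defining $\tilde x_{r}$ and the sign of $(2-\bar q)/\bar q$ enter), and one must keep conditional and total expectations separate so that the product of contraction factors along the random trajectory is justified by the tower property and not by spurious independence. The remaining work is arithmetic — matching the exact ceilings and the factor-two slacks in $J$ and in the iteration total to those of \cite{FerQu:20} — and is purely mechanical once the restart guarantee (b) has been verified for Algorithm~2.
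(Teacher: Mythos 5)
Your proposal is correct and takes essentially the same route as the paper, which itself gives no written proof but defers to the adaptive-restart analysis of \cite{FerQu:20}, with Lemma \ref{lem:cond} supplying the algorithm-specific contraction guarantee — exactly the reduction you carry out (and you correctly fill in the good-restart counting under Assumption \ref{assum} and the final application of Assumption \ref{ass1}[A.2]). The only cosmetic imprecision is writing the monotonicity as $F(\tilde x_r)\le F(x_0)$ when the selection in Step 2 is based on $F_\gamma$; what is actually needed, $F_\gamma(\tilde x_r)-F^{*}\le F(x_0)-F^{*}$, follows since $\mathcal{C}$ is the identity, and the paper's own Corollary \ref{cor:rest} phrases it the same way.
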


%\begin{proof}
%    See Theorem 2 in \cite{FerQu:20} for a proof. 
%\end{proof}

\noindent If we do not know the value of $F^{*}$ and $\bar \kappa$, but we know an upper bound for $\bar \kappa$, let us say $\bar\kappa_{\max}$, then we can take in Assumption 4
\vspace{-0.2cm}
\begin{align*}
K_{0} = \left \lceil e \sqrt{\dfrac{4 N^2  |F (x_{0})|^{\frac{2-\bar{q}}{\bar q}}    }{\bar{\kappa}_{\max}^{\frac{2}{\bar q}}  } } \right\rceil. \vspace{-0.2cm}
\end{align*}

\noindent From Theorem \ref{theo:rest}, the complexity of Moreau envelope, Forward-Backward envelope and Nesterov's smoothing
becomes: \\

\noindent (i) Moreau and Forward-Backward envelopes: 
\vspace{-0.2cm}
	\begin{align*}
	\dfrac{1}{ \sqrt{\bar \kappa^{\frac{2}{\bar q}}} } \ln \left( \dfrac{1}{\epsilon}\right) \log_{2} \left( \sqrt{ \dfrac{ \bar \kappa_{\max}^{\frac{2}{\bar q}}}{ \bar \kappa^{\frac{2}{\bar q}}}} \ln \left( \dfrac{F\gamma(x_{0}) - F^{*}}{ \epsilon }  \right)     \right).  \vspace{-0.2cm}
	\end{align*}
	
\noindent (ii) Nesterov's smoothing: 
\vspace{-0.2cm}
	\begin{align*}
	\dfrac{1}{\sqrt{\epsilon} } \ln \left( \dfrac{1}{\epsilon}\right) \log_{2} \left( \sqrt{ \dfrac{ \bar \kappa_{\max}^{\frac{2}{\bar q}}}{ \bar \kappa^{\frac{2}{\bar q}}}} \ln \left( \dfrac{F\gamma(x_{0}) - F^{*}}{ \epsilon }  \right)     \right). \vspace{-0.2cm}
	\end{align*}

\section{Relative smoothness along coordinates}
\label{sec:RelSmooth}
\noindent In the previous sections we  have considered nonsmooth (possibly nonseparable) objective functions $F$. In this section we still assume nonsmooth functions (possibly nonseparable), but relative smooth along coordinates with respect to a given function  (possibly also nonseparable).  Consider $\phi:\mathbb{R}^{n} \to \mathbb{R}$ a strictly convex differentiable function.
Then, the corresponding  Bregman distance is defined as \cite{BauBolTe:16}:
\vspace{-0.2cm}
\begin{align*}
	D_{\phi}(y,x):= \phi(y) - \phi(x) - \langle \nabla \phi(x) , y -x \rangle. 
 \vspace{-0.2cm}
\end{align*}

\noindent Note that $D_{\phi}(y,x) \geq 0$ and $D_{\phi}(y,x) = 0 $ if and only if $y=x$. However, $D_{\phi}(y,x)$ is not necessarily symmetric. Recall that a function $F$ is $L$-smooth (or $F$ has Lipschitz gradient) when it satisfies  Definition  \ref{def:lip} with $N=1$ and $L_1 = L$. As a generalization of a smooth function, papers  \cite{BauBolTe:16} and  \cite{LuFreNes:18} introduced the definition of relative smooth functions. Function $F$ is  smooth relative  to a function $\phi$ if there exists  $L>0$ such that
\vspace{-0.2cm}
\begin{align}
    \label{eq:53}
    F(y) \leq F(x) + \langle \nabla F(x), y-x \rangle + L D_{\phi}(y,x) \quad \forall x,y \in \mathbb{R}^{n}.
\vspace{-0.2cm}
\end{align}

\noindent Note that in relative smooth coordinate descent methods \cite{GaoLu:21,HanRic:21,HiePha:21}, the function $\phi$ is considered separable, i.e, $\phi(x) = \sum_{i=1}^{N} \phi_{i}(x^{(i)})$ and inequality \eqref{eq:53} is defined accordingly, as a generalization of the inequality \eqref{lip:3}. In this section, we define the notion of Bregman distance along coordinates for $\phi$ possibly nonseparable.
\begin{definition}
    Consider $\phi:\mathbb{R}^{n} \to \mathbb{R}$ a differentiable function that is strictly convex along coordinates as given in Definition \ref{def:concor}. We define $D_{\phi}: \mathbb{R}^{n} \to \mathbb{R}$ as the Bregman distance along coordinates associated to the kernel function $\phi$ as follows
    \vspace{-0.2cm}
    \begin{align*}
	D_{\phi}(x+U_{i}d,x):= \phi(x+U_{i}d) - \phi(x) - \langle U_{i}^{T}\nabla \phi(x) , d \rangle \quad \forall i=1,\cdots,N. \vspace{-0.2cm}
\end{align*}
\end{definition}

\noindent Since $\phi$ is strictly convex along coordinates, then $D_{\phi}(x+U_{i}d,x) > 0$ for all $d \neq 0$. Next, we introduce the notion of relative smoothness along coordinates.  

\begin{definition}
	 We say that the function $F:\mathbb{R}^{n} \to \mathbb{R}$ is relative smooth along coordinates with respect to the function $\phi:\mathbb{R}^{n} \to \mathbb{R}$, when it satisfies the following inequality, for all $x \in \mathbb{R}^{n}, d \in \mathbb{R}^{n_{i}}$ and $i=1,\cdots,N$,
    \vspace{-0.2cm}
	\begin{align*}
		F(x+U_{i}d) \leq F(x) + \langle U_{i}^{T}\nabla F(x), d \rangle + L_{i}D_{\phi}(x+U_{i}d,x).    
    \vspace{-0.2cm}
	\end{align*}
\end{definition}

\noindent\red{In this section, we again consider the following probabilities for $i=1,\cdots,N$:
\begin{equation}
\label{probabilities}
p^{(i)}_{\alpha} = \dfrac{L_{i}^{\alpha}}{S_{\alpha}}, \quad \text{with} \quad S_{\alpha} = \sum_{i=1}^{N} L_{j}^{\alpha},
\end{equation}
\noindent and the following norms, for $\alpha\in \mathbb{R}$:
\vspace{-0.2cm}
\begin{equation*}
\|x\|^{2}_{\alpha} =  \sum_{i=1}^{N} L_{i}^{\alpha} \|x^{(i)}\|^{2}_{(i)} \quad \text{and} \quad \left(\|x\|^{*}_{\alpha}\right)^2 =  \sum_{i=1}^{N} L_{i}^{-\alpha} \|x^{(i)}\|^{2}_{(i)}. 
\vspace{-0.2cm}
\end{equation*}}

\noindent Next, we introduce a relative randomized coordinate descent algorithm.
\begin{center}
	\noindent\fbox{%
		\parbox{11.5cm}{%
			\textbf{Algorithm 4 (RRCD):}\\
			Given a starting point $x_{0} \in \rset^n$. \\
			For $k \geq 0$ do: \\
			1. \; Choose $i_{k} \in \{1,...,N\}$ \red{with probability $p_{\alpha}^{(i_{k})}$}. \\
			2. \; Solve the following subproblem:
			\vspace*{-0.2cm}
			\begin{equation}
				\label{eq:subproblem}
				\;\; d_{k} = \arg \min_{d \in \mathbb{R}^{n_{i}}} F(x_{k}) +  \langle U^{T}_{i_k} \nabla F(x_{k}), d \rangle + L_{i_{k}} D_{\phi}\left(x_{k} + U_{i_k}d,x_{k}\right)  .
				\vspace*{-0.2cm}	
			\end{equation} \\
			3. \; 	Update $x_{k+1} = x_{k} + U_{i_k}d_{k}$.
	}}
\end{center}
\medskip
\noindent Throughout this section the following assumptions are valid: 

\begin{assumption}
	\label{ass:relSmooth}
	Assume that: \\
	A.1 $F$ is convex and relative smooth along coordinates with respect to the function $\phi$, with constants $L_{i} > 0$ for all $i=1,\cdots,N$. \\
	A.2 $\phi$ is strictly convex along coordinates, see Definition \ref{def:concor}.\\
    A.3 The gradient of $\phi$ is block coordinate-wise Lipschitz continuous on the level set of $F$, $\mathcal{L}_{F}(x_0)$, with constants $H_{i} > 0$, for all $i=1,\cdots,N$, where $x_{0}$ is the starting point of the algorithm RRCD, see Defintion \ref{def:lip}.\\
    A.4 A solution exists for \eqref{eq:prob} (hence, the optimal value $F^{*} > -\infty$).
\end{assumption}

\noindent Note that in Algorithm RRCD we do not need to know the constants  $H_{i} > 0$. For the proof of next lemma let us define
\vspace{-0.2cm}
\begin{equation}
	\label{eq:Hhmax}
	\Hhmax = \max_{i=1,\cdots,N} H_i. \,\ %\text{and} \,\ L_{\min} = \min_{i=1,\cdots,N} L_{i}. \vspace{-0.2cm}
\end{equation}

\begin{lemma}
	\label{lem:relSmo}
	Let the sequence $(x_{k})_{k\geq 0}$ be generated by RRCD and Assumption \ref{ass:relSmooth} hold. Then,
    $x_{k} \in \mathcal{L}_{F}(x_0)$ a.s. for all $k\geq0$ and
    the iterates of RRCD satisfy the descent
	\vspace{-0.2cm}
    \begin{align}
		\mathbb{E}[F(x_{k+1}) \ | \ x_{k} ] &\leq F(x_{k}) - \red{ \dfrac{1}{2S_{\alpha}\Hhmax}  \left(\|\nabla F(x_{k})\|^{*}_{1-\alpha}\right)^2}. \label{eq:14} \vspace{-0.2cm}
	\end{align}
\end{lemma}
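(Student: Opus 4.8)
The plan is to derive a single per‑iteration descent inequality that yields both assertions at once. Fix $k$ and condition on $x_{k}$. Since $d_{k}$ solves \eqref{eq:subproblem} and $F$ is relative smooth along coordinates with respect to $\phi$, for every $d\in\mathbb{R}^{n_{i_{k}}}$,
\begin{align*}
F(x_{k+1})=F(x_{k}+U_{i_{k}}d_{k}) &\le F(x_{k})+\langle U_{i_{k}}^{T}\nabla F(x_{k}),d_{k}\rangle+L_{i_{k}}D_{\phi}(x_{k}+U_{i_{k}}d_{k},x_{k}) \\
&\le F(x_{k})+\langle U_{i_{k}}^{T}\nabla F(x_{k}),d\rangle+L_{i_{k}}D_{\phi}(x_{k}+U_{i_{k}}d,x_{k}),
\end{align*}
the last step being optimality of $d_{k}$. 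Taking $d=0$ gives $F(x_{k+1})\le F(x_{k})$; hence $x_{k}\in\mathcal{L}_{F}(x_{0})\Rightarrow x_{k+1}\in\mathcal{L}_{F}(x_{0})$, and since $x_{0}\in\mathcal{L}_{F}(x_{0})$ and this holds for every draw of $i_{k}$, induction on $k$ gives $x_{k}\in\mathcal{L}_{F}(x_{0})$ a.s.

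Next I would sharpen the estimate. Write $g_{k}=U_{i_{k}}^{T}\nabla F(x_{k})$ (the case $g_{k}=0$ being trivial), set $c=L_{i_{k}}H_{i_{k}}$, and let $d^{\star}$ be the minimizer of $d\mapsto\langle g_{k},d\rangle+\tfrac{c}{2}\|d\|_{(i_{k})}^{2}$, so that $\langle g_{k},d^{\star}\rangle=-c\|d^{\star}\|_{(i_{k})}^{2}$, $\|d^{\star}\|_{(i_{k})}=\tfrac1c\|g_{k}\|_{(i_{k})}$, and the minimal value is $-\tfrac{1}{2c}\|g_{k}\|_{(i_{k})}^{2}$. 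The crucial point is that the coordinate‑wise Lipschitz estimate for $\nabla\phi$ is only available on $\mathcal{L}_{F}(x_{0})$ (Assumption \ref{ass:relSmooth}[A.3]), so before plugging $d=d^{\star}$ into the inequality above I must certify $x_{k}+U_{i_{k}}d^{\star}\in\mathcal{L}_{F}(x_{0})$. To this end set $\varphi(t)=F(x_{k}+tU_{i_{k}}d^{\star})$ and $\tau=\sup\{t\ge0:\ x_{k}+sU_{i_{k}}d^{\star}\in\mathcal{L}_{F}(x_{0})\ \forall s\in[0,t]\}$; a short continuity argument (using $g_{k}\neq0$, hence $\varphi'(0)=\langle g_{k},d^{\star}\rangle<0$, to handle the borderline case $F(x_{k})=F(x_{0})$) shows $\tau>0$. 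For $t\in[0,\tau]$ the segment $[x_{k},x_{k}+tU_{i_{k}}d^{\star}]$ lies in the convex level set $\mathcal{L}_{F}(x_{0})$, so \eqref{lip2} applied to $\phi$ gives $D_{\phi}(x_{k}+tU_{i_{k}}d^{\star},x_{k})\le\tfrac{H_{i_{k}}}{2}t^{2}\|d^{\star}\|_{(i_{k})}^{2}$, and combining with relative smoothness,
\begin{align*}
\varphi(t)\le F(x_{k})+t\langle g_{k},d^{\star}\rangle+\tfrac{c}{2}t^{2}\|d^{\star}\|_{(i_{k})}^{2}=F(x_{k})+c\|d^{\star}\|_{(i_{k})}^{2}\Big(\tfrac{t^{2}}{2}-t\Big),
\end{align*}
which is strictly below $F(x_{k})\le F(x_{0})$ for $t\in(0,2)$. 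If $\tau$ were finite with $\tau<1$, continuity would force $\varphi(\tau)=F(x_{0})$, contradicting the strict inequality at $t=\tau$; hence $\tau\ge1$, so $x_{k}+U_{i_{k}}d^{\star}\in\mathcal{L}_{F}(x_{0})$, and evaluating the first displayed inequality at $d=d^{\star}$ (with $L_{i_{k}}D_{\phi}(x_{k}+U_{i_{k}}d^{\star},x_{k})\le\tfrac{c}{2}\|d^{\star}\|_{(i_{k})}^{2}$) gives $F(x_{k+1})\le F(x_{k})-\tfrac{1}{2L_{i_{k}}H_{i_{k}}}\|g_{k}\|_{(i_{k})}^{2}\le F(x_{k})-\tfrac{1}{2L_{i_{k}}\Hhmax}\|g_{k}\|_{(i_{k})}^{2}$, using $H_{i_{k}}\le\Hhmax$.

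Finally, taking expectation over $i_{k}$ with probabilities $p_{\alpha}^{(i)}=L_{i}^{\alpha}/S_{\alpha}$ from \eqref{probabilities},
\begin{align*}
\mathbb{E}[F(x_{k+1})\mid x_{k}]\le F(x_{k})-\frac{1}{2\Hhmax}\sum_{i=1}^{N}\frac{L_{i}^{\alpha}}{S_{\alpha}}\cdot\frac{1}{L_{i}}\big\|U_{i}^{T}\nabla F(x_{k})\big\|_{(i)}^{2}=F(x_{k})-\frac{1}{2S_{\alpha}\Hhmax}\sum_{i=1}^{N}L_{i}^{\alpha-1}\big\|U_{i}^{T}\nabla F(x_{k})\big\|_{(i)}^{2},
\end{align*}
and the last sum equals $\big(\|\nabla F(x_{k})\|^{*}_{1-\alpha}\big)^{2}$ by the definition of the norm $\|\cdot\|^{*}_{1-\alpha}$, which is \eqref{eq:14}. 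The main obstacle is the middle step: because $D_{\phi}$ can only be controlled by $H_{i}$‑quadratics inside $\mathcal{L}_{F}(x_{0})$, one has to exclude the possibility that the model minimizer $x_{k}+U_{i_{k}}d^{\star}$ exits the level set; the parabola/ray argument, exploiting that $F$ is forced to decrease along that direction, is what rules it out.
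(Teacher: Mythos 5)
Your proof is correct, and it reaches the paper's per-iteration bound by a genuinely different route in the middle. The paper substitutes the exact first-order optimality condition \eqref{eq:52} of the Bregman subproblem into the relative-smoothness bound, which collapses the estimate to $F(x_{k+1})\leq F(x_k)-L_{i_k}D_{\phi}(x_k,x_{k}+U_{i_k}d_k)$ (the \emph{reversed} Bregman distance, as in \eqref{eq:51}); it then lower-bounds that reversed distance by $\tfrac{1}{2H_{i_k}}\|U_{i_k}^{T}(\nabla\phi(x_{k+1})-\nabla\phi(x_k))\|_{(i_k)}^{2}$ via the co-coercivity statement of Lemma \ref{lem:1}, and converts back to $\|U_{i_k}^{T}\nabla F(x_k)\|_{(i_k)}^{2}$ using \eqref{eq:52} again. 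You instead never touch the optimality condition: you compare the model value at $d_k$ with its value at the explicit quadratic surrogate minimizer $d^{\star}$ of modulus $L_{i_k}H_{i_k}$, and upper-bound $D_{\phi}$ at $d^{\star}$ by the $H_{i_k}$-quadratic from \eqref{lip2}. Both yield exactly $F(x_{k+1})\leq F(x_k)-\tfrac{1}{2L_{i_k}H_{i_k}}\|U_{i_k}^{T}\nabla F(x_k)\|_{(i_k)}^{2}$, and the expectation step is identical. What each buys: the paper's route only ever invokes the Lipschitz property of $\nabla\phi$ at the actual iterates $x_k,x_{k+1}$, which lie in $\mathcal{L}_F(x_0)$ by the monotonicity already implied by \eqref{eq:51}, so no extra topological work is needed (though it pays by relying on Lemma \ref{lem:1}, whose own proof quietly evaluates the function at an auxiliary point that may leave the set where \eqref{lip:3} is assumed). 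Your route avoids Lemma \ref{lem:1} entirely but must certify that the hypothetical point $x_k+U_{i_k}d^{\star}$ stays in $\mathcal{L}_F(x_0)$; your ray/parabola argument does this correctly (the phrase ``continuity forces $\varphi(\tau)=F(x_0)$'' is slightly loose --- what you actually get is that $\varphi(\tau)<F(x_0)$ strictly, so by continuity the exit time cannot be $\tau$ --- but the contradiction stands). Like the paper, you implicitly take the block norms $\|\cdot\|_{(i)}$ to be Euclidean when writing $d^{\star}=-g_k/c$; this matches the paper's own conventions.
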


\begin{proof}
	\noindent From the optimality condition of the subproblem \eqref{eq:subproblem}, we have
    \vspace{-0.2cm}
	\begin{equation}
		U^{T}_{i_{k}} \nabla F(x_{k}) + L_{i_{k}}U^{T}_{i_{k}}\nabla \phi(x_{k}+U_{i_{k}}d_{k})  - L_{i_{k}}U^{T}_{i_{k}}\nabla \phi(x_{k})  =0.\label{eq:52} \vspace{-0.2cm}
	\end{equation}
 
    \noindent Hence, using Assumption \ref{ass:relSmooth}[A1], we obtain
    \vspace{-0.2cm}
	\begin{align*}
		&F(x_{k+1}) \leq F(x_{k}) +  \langle U^{T}_{i_k} \nabla F(x_{k}), d_{k} \rangle + L_{i_{k}} D_{\phi}\left(x_{k} + U_{i_k}d_{k},x_{k}\right)  \\
		&= F(x_{k}) -  L_{i_{k}}\langle U^{T}_{i_k}\left( \nabla \phi(x_{k}+U_{i_{k}}d_{k}) -  \nabla \phi(x_{k})\right) , d_{k} \rangle + L_{i_{k}} D_{\phi}\left(x_{k} + U_{i_k}d_{k},x_{k}\right).  \vspace{-0.2cm}
	\end{align*}
	
	\noindent Moreover, 
    \vspace{-0.2cm}
	\begin{equation*}
		D_{\phi}\left(x_{k} + U_{i_k}d_{k},x_{k}\right) = \phi(x_{k} +U_{i_{k}}d_{k}) - \phi(x_{k}) -  \langle U^{T}_{i_k}\nabla \phi(x_{k}), d_{k} \rangle. \vspace{-0.2cm}
	\end{equation*}
	
	\noindent Hence,
    \vspace{-0.2cm}
	\begin{align}
		F(x_{k+1}) &\leq F(x_{k}) -  L_{i_{k}} \left(\langle U^{T}_{i_k}\nabla \phi(x_{k}+U_{i_{k}}d_{k}) , d_{k} \rangle -  \phi(x_{k} +U_{i_{k}}d_{k}) +  \phi(x_{k}) \right)\nonumber \\
		&=F(x_{k}) - L_{i_{k}}D_{\phi}\left(x_{k},x_{k} + U_{i_k}d_{k}\right). \label{eq:51} \vspace{-0.2cm}
	\end{align}

    \noindent From Assumption \ref{ass:relSmooth}[A2-A3] and Lemma \ref{lem:1}, we have 
    \vspace{-0.2cm}
    \begin{align*}
        D_{\phi}\left(x_{k},x_{k} + U_{i_k}d_{k} \right) \geq \dfrac{1}{2 H_{i_k}} \| U^{T}_{i_{k}}\nabla \phi(x_{k}+U_{i_{k}}d_{k})  - U^{T}_{i_{k}}\nabla \phi(x_{k})\|^2_{(i_{k})}.
        \vspace{-0.2cm}
    \end{align*}

    \noindent Using \eqref{eq:Hhmax}, \eqref{eq:51} and the inequality above, we get 
    \vspace{-0.2cm}
   \red{\begin{align}
		F(x_{k+1}) &\leq F(x_{k}) -  \dfrac{L_{i_{k}}}{2 \Hhmax } \| U^{T}_{i_{k}}\nabla \phi(x_{k}+U_{i_{k}}d_{k})  - U^{T}_{i_{k}}\nabla \phi(x_{k})\|^2_{(i_{k})}. \label{eq:3} \vspace{-0.2cm}
	\end{align}
	\noindent Hence, $F(x_{k}) \leq F(x_{0})$ for all $k \geq 0$ and the first statement follows. On the other hand, from \eqref{probabilities} we have
    \vspace{-0.2cm}
	\begin{equation*}
		\mathbb{E}\left[ \dfrac{1}{L_{i_{k}}} \| U^{T}_{i_{k}} \nabla F(x_{k})\|^2_{(i_{k})} \ | \ x_{k} \right] = \sum_{i=1}^{N} \dfrac{p_{\alpha}^{(i)}}{L_{i}} \| U^{T}_{i} \nabla F(x_{k})\|^2_{(i)}   = \dfrac{1}{S_{\alpha}} \left(\|\nabla F(x_{k})\|^{*}_{1-\alpha}\right)^2.
	\end{equation*}
	\noindent Hence, from \eqref{eq:52}, we obtain
    \vspace{-0.2cm}
	\begin{align}
		\dfrac{1}{S_{\alpha}} \left(\|\nabla F(x_{k})\|^{*}_{1-\alpha}\right)^2 = \mathbb{E}[L_{i_{k}}\| U^{T}_{i_{k}}\nabla \phi(x_{k}+U_{i_{k}}d_{k})  - U^{T}_{i_{k}}\nabla \phi(x_{k})\|^2_{(i_{k})} \ | \ x_{k} ]. \label{eq:4} \vspace{-0.2cm}
	\end{align}
	\noindent Taking expectation with respect to $x_{k}$ in \eqref{eq:3} and using \eqref{eq:4}, we get \eqref{eq:14}.}
\end{proof}

%%%%%%%%%%%%%%%%%%%%%%%%%%

\subsection{Relative coordinate descent: convex case}
\noindent Our convergence analysis from this section follows similar lines as in \cite{ChoNec:22}. In particular, Lemma \ref{lem:relSmo} implies the inequality (6.1) in \cite{ChoNec:22}, with 
\vspace{-0.2cm}
\red{\begin{align}
\label{eq:C}
C = \dfrac{1}{2S_{\alpha} \Hhmax}. \vspace{-0.2cm}
\end{align}}

\noindent Following the same argument as in \cite{ChoNec:22}, we can get convergence  rates when $F$ is convex.  Define:
\vspace{-0.2cm}
\red{
\begin{equation}
\label{eq:57}
	R_{1-\alpha} = \max_{k \geq 0} \min_{x^{*} \in X^{*}} \|x_{k} - x^{*}\|_{1-\alpha} < \infty \quad \text{and} \quad \Delta_0 = F(x_{0})- F^{*}.
\end{equation} 
}
\begin{theorem}
	\label{the:conv}
	Let the sequence $(x_{k})_{k\geq 0}$ be generated by RRCD. If Assumption \ref{ass:relSmooth} hold, then the following sublinear rate in function values holds:  
    \vspace{-0.2cm}
	\begin{equation}
		\mathbb{E}[F(x_{k}) - F^{*}] \leq \red{\dfrac{2S_{\alpha} \Hhmax R^2_{1-\alpha} }{k + 2S_{\alpha} \Hhmax R^2_{1-\alpha}/\Delta_{0}}. } \label{eq:17} \vspace{-0.2cm}
	\end{equation}
\end{theorem}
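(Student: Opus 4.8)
The plan is to merge the one-step expected decrease of Lemma~\ref{lem:relSmo} with the convexity of $F$ and then solve the resulting scalar recursion, following the same lines as \cite{ChoNec:22}. First I would fix an optimal point $x^{*}\in X^{*}$ and combine convexity with the generalized Cauchy--Schwarz inequality for the dual pair of weighted norms $\|\cdot\|_{1-\alpha}$ and $\|\cdot\|^{*}_{1-\alpha}$ (the corresponding block weights $L_{i}^{1-\alpha}$ and $L_{i}^{-(1-\alpha)}$ are reciprocal) to get, for every $k\geq0$,
\[
F(x_{k}) - F^{*} \leq \langle \nabla F(x_{k}), x_{k} - x^{*}\rangle \leq \|\nabla F(x_{k})\|^{*}_{1-\alpha}\,\|x_{k} - x^{*}\|_{1-\alpha}.
\]
Since $x_{k}\in\mathcal{L}_{F}(x_{0})$ almost surely by the first claim of Lemma~\ref{lem:relSmo}, taking the minimum over $x^{*}\in X^{*}$ and invoking the definition of $R_{1-\alpha}$ in \eqref{eq:57} yields
\[
\big(\|\nabla F(x_{k})\|^{*}_{1-\alpha}\big)^{2} \geq \frac{(F(x_{k})-F^{*})^{2}}{R^{2}_{1-\alpha}}.
\]

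Next I would substitute this lower bound into \eqref{eq:14} (with $C=1/(2S_{\alpha}\Hhmax)$ from \eqref{eq:C}), take total expectation using the tower property, and apply Jensen's inequality $\mathbb{E}[(F(x_{k})-F^{*})^{2}]\geq\big(\mathbb{E}[F(x_{k})-F^{*}]\big)^{2}$. Writing $r_{k}=\mathbb{E}[F(x_{k})-F^{*}]\geq0$ and $c=1/(2S_{\alpha}\Hhmax R^{2}_{1-\alpha})$, this produces the recursion $r_{k+1}\leq r_{k}-c\,r_{k}^{2}$ for all $k\geq0$. Its right-hand side is at most $r_{k}$, so $(r_{k})_{k\geq0}$ is nonincreasing; hence $c\,r_{k}r_{k+1}\leq c\,r_{k}^{2}$, giving $r_{k+1}(1+c\,r_{k})\leq r_{k}$. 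Dividing by $r_{k}r_{k+1}$ (if $r_{k}=0$ for some $k$, monotonicity forces $r_{j}=0$ for all $j\geq k$ and \eqref{eq:17} holds trivially) yields $1/r_{k+1}\geq 1/r_{k}+c$, and telescoping from $0$ to $k$ with $r_{0}=\Delta_{0}$ gives
\[
\frac{1}{r_{k}} \geq \frac{1}{\Delta_{0}} + c\,k
\quad\Longrightarrow\quad
r_{k} \leq \frac{1}{1/\Delta_{0}+c\,k} = \frac{2S_{\alpha}\Hhmax R^{2}_{1-\alpha}}{k + 2S_{\alpha}\Hhmax R^{2}_{1-\alpha}/\Delta_{0}},
\]
which is exactly \eqref{eq:17}.

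The structural chain (descent lemma $\to$ convexity $\to$ scalar recursion $\to$ telescoping) is completely standard, so I expect no real difficulty. The two points that need a little care are the measurability/tower-property bookkeeping when passing from the conditional bound \eqref{eq:14} to an inequality purely between $r_{k+1}$ and $r_{k}$, and the clean justification of the dual-norm Cauchy--Schwarz step in the block-coordinate weighted geometry; the finiteness of $R_{1-\alpha}$ that this step requires is precisely what the a.s. inclusion $x_{k}\in\mathcal{L}_{F}(x_{0})$ from Lemma~\ref{lem:relSmo} provides. This is the main (and quite mild) obstacle; everything else is the classical $\mathcal{O}(1/k)$ argument.
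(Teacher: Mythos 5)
Your proposal is correct and follows exactly the route the paper intends: it combines the descent inequality \eqref{eq:14} of Lemma \ref{lem:relSmo} with convexity and the dual-norm Cauchy--Schwarz bound to obtain the recursion $r_{k+1}\leq r_{k}-c\,r_{k}^{2}$ and then telescopes, which is precisely the standard argument the paper delegates to Theorem 7.1 of \cite{ChoNec:22} rather than writing out. All the steps (the Jensen direction, the monotonicity used to pass to $r_{k+1}(1+c\,r_{k})\leq r_{k}$, and the final algebra recovering the constant $2S_{\alpha}\Hhmax R^{2}_{1-\alpha}$) check out.
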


\begin{proof}
    See Theorem 7.1 in \cite{ChoNec:22} for a proof.
\end{proof}

%%%%%%%%%%%%%%%%%%%%%%%%%%

\subsection{Relative coordinate descent: $q$-growth case}
In this section we consider 
\vspace{-0.2cm}
\begin{align*}
\bar{X} = \{x \in \mathbb{R}^{n} : 0 < F(x) - F^{*} < r\} \cap \{x \in \mathbb{R}^{n}:  \text{dist}(x, X^{*}) \leq \gamma \}, \vspace{-0.2cm}   
\end{align*}
\red{for some $r,\gamma> 0$}. We assume that the function $F$ satisfies Assumption \ref{ass:qgrowth} with $X=\bar{X}$ and $q\in[1,2]$. Since $F$ is convex, we have that Assumption \ref{ass:qgrowth} is equivalent to \eqref{KL}, see \cite{BolNgu:16}. In algorithm RRCD, \red{$i_{k}$ is chosen with the probability $p_{\alpha}^{(i_{k})}$} and the inequality \eqref{KL} is valid only in a neighborhood. Using similar arguments as in \cite{MauFadAtt:22,ChoNec:22} one can show that there exists an iteration $\bar{k} > 0$ such that inequality \eqref{KL} holds for all $k \geq \bar{k}$, with some probability.
Next lemma presents some basic properties for  $X(x_{0})$,  the limit points of the sequence $(x_{k})_{k\geq 0}$. 

\begin{lemma}
	\label{LemmaKL}
	Let the sequence $(x_{k})_{k\geq 0}$  generated by RRCD be bounded. If  Assumption \ref{ass:relSmooth} holds, then  $X(x_{0})$ is  a compact set,  $F (X(x_{0})) = F^*$,  $F(x_{k})  \to F^{*}$ a.s., and $\nabla F(X(x_{0}))=0$,   \red{ $\| \nabla F(x_{k})\|_{1-\alpha}^{*}  \to 0$ a.s.}
\end{lemma}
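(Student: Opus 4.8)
The plan is to exploit the supermartingale-type descent in \eqref{eq:14} from Lemma \ref{lem:relSmo} to control the trajectory. First I would observe that, by Lemma \ref{lem:relSmo}, the sequence $F(x_{k})$ is nonincreasing in conditional expectation and bounded below (Assumption \ref{ass:relSmooth}[A4]), so $\mathbb{E}[F(x_{k})]$ converges; combined with the nonnegativity of the term $\tfrac{1}{2S_{\alpha}\Hhmax}(\|\nabla F(x_{k})\|^{*}_{1-\alpha})^2$, summing \eqref{eq:14} over $k$ gives $\sum_{k\geq0}\mathbb{E}[(\|\nabla F(x_{k})\|^{*}_{1-\alpha})^2] < \infty$, hence $\|\nabla F(x_{k})\|^{*}_{1-\alpha} \to 0$ almost surely (along the whole sequence, after passing to the Robbins--Siegmund / monotone convergence argument, since $F(x_{k})$ is itself a.s. monotone by \eqref{eq:3}). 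This already forces $\nabla F(x) = 0$ for every limit point $x \in X(x_{0})$, because $\nabla F$ is continuous and $\|\cdot\|^{*}_{1-\alpha}$ is a genuine norm.

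Next I would pin down the value of $F$ on $X(x_{0})$. Since $(x_{k})$ is assumed bounded, $X(x_{0})$ is nonempty; it is closed as the set of limit points of a bounded sequence, hence compact. On the event of probability one where $F(x_{k})$ is monotone nonincreasing and $\|\nabla F(x_{k})\|^{*}_{1-\alpha}\to0$, the limit $F_\infty := \lim_k F(x_{k})$ exists and by continuity $F \equiv F_\infty$ on $X(x_{0})$. For any $x^{*} \in X(x_{0})$ we have $\nabla F(x^{*})=0$, and since $F$ is convex (Assumption \ref{ass:relSmooth}[A1]), a stationary point is a global minimizer, so $F_\infty = F(x^{*}) = F^{*}$. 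This gives simultaneously $F(X(x_{0}))=F^{*}$, $F(x_{k})\to F^{*}$ a.s., and $\nabla F(X(x_{0}))=0$. The almost-sure convergence $F(x_k)\to F^*$ then also upgrades the gradient statement: the standard subsequence argument shows every convergent subsequence of $(x_k)$ has limit in $X(x_0)$, on which $\nabla F$ vanishes.

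The main obstacle is the passage from convergence in expectation to the pathwise (almost sure) statements. This is handled by the Robbins--Siegmund supermartingale convergence theorem applied to $V_k = F(x_{k}) - F^{*} \geq 0$: inequality \eqref{eq:14} reads $\mathbb{E}[V_{k+1}\mid x_k] \leq V_k - C_k$ with $C_k = \tfrac{1}{2S_{\alpha}\Hhmax}(\|\nabla F(x_{k})\|^{*}_{1-\alpha})^2 \geq 0$, so $V_k$ converges a.s. and $\sum_k C_k < \infty$ a.s., whence $\|\nabla F(x_{k})\|^{*}_{1-\alpha}\to0$ a.s.\ and the a.s.\ limit of $V_k$ must be $0$ because $F(X(x_0))=F^*$ on the bounded trajectory. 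I would simply cite \cite{ChoNec:22,MauFadAtt:22} for these standard steps, since the argument is identical to the one used there for Lemma \ref{lem:relSmo} and Theorem \ref{the:conv}, and note that compactness of $X(x_0)$ together with continuity of $F$ and $\nabla F$ finishes the proof.
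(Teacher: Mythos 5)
Your argument is correct and complete; the paper itself gives no proof here, deferring entirely to Lemma 6.4 of \cite{ChoNec:22}, and what you have written is precisely the standard argument that citation invokes: the descent inequality \eqref{eq:14} plus $F^{*}>-\infty$ gives summability of $\mathbb{E}\bigl[(\|\nabla F(x_{k})\|^{*}_{1-\alpha})^2\bigr]$, hence $\|\nabla F(x_{k})\|^{*}_{1-\alpha}\to 0$ a.s.; boundedness makes $X(x_{0})$ nonempty and compact; continuity of $\nabla F$ (automatic for a differentiable convex function) makes every limit point stationary, and convexity upgrades stationarity to global optimality, which pins the monotone limit of $F(x_{k})$ at $F^{*}$. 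One small simplification: since \eqref{eq:3} already gives pathwise monotonicity of $F(x_{k})$, you do not need Robbins--Siegmund at all --- deterministic monotone convergence of $F(x_{k})$ together with Tonelli applied to the summed expectations suffices.
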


\begin{proof}
    See Lemma 6.4 in \cite{ChoNec:22} for a proof. 
\end{proof}

\noindent Next theorem presents the convergence rate for the relative smooth algorithm RRCD when the function $F$ satisfies the $q$-growth assumption with $q\in[1,2]$ (i.e., Assumption \ref{ass:qgrowth}). Consider  \red{$C_2=C\mu_{\alpha}^2\Delta_0^{2p-1}$} and \red{$C_3= \mu_{\alpha}^2 \Delta_0^{2p}C$}, with $p=\frac{q-1}{q}$, $C$ defined in \eqref{eq:C}, $\Delta_{0}$ defined in \eqref{eq:57} and \red{$\mu_{\alpha}>0$ being a constant such that
\vspace{-0.2cm}
\begin{equation}
    \label{KL:alpha}
    \mu_{\alpha}(F(x) - F^{*})^{p} \leq \|\nabla F(x)\|_{1-\alpha}^* \quad \forall x \in \text{dist}(x, X^{*}) \leq \gamma, F^{*} < F(x)  < F^{*} + r. \vspace{-0.3cm}
\end{equation}}

\begin{theorem}
    \label{the:KL}
	Let the sequence $(x_{k})_{k\geq 0}$ generated by RRCD be bounded. If Assumptions \ref{ass:qgrowth} and \ref{ass:relSmooth} hold, with $X=\bar{X}$, then for any $ \delta > 0$ there exist a measurable set $\Omega_{ \delta}$ satisfying  $ \mathbb{P}[\Omega_{ \delta}] \geq 1 - \delta$ and $k_{ \delta,\gamma,r} > 0$ such that with probability at least $1 - \delta$ the following statements hold, for all $k\geq k_{\delta,\gamma,r}$: 
    \vspace{-0.2cm}
	\begin{equation}
        \label{ineq:KL}
		\mathbb{E}[F(x_{k}) - F^{*}] \leq \left( 1 - C_{2}  \right)^{k-k_{\delta,\gamma,r}} \left( F(x_{k_{\delta,\gamma,r}}) - F^{*} + C_{3}\sqrt{\delta}  \right) + C_{3}\sqrt{\delta}. \vspace{-0.2cm}
	\end{equation}
\end{theorem}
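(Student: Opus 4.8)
The plan is to combine the one-step descent of Lemma~\ref{lem:relSmo} with the local KL inequality \eqref{KL:alpha}, following the strategy of \cite{ChoNec:22,MauFadAtt:22}. The first step is \emph{localization}: by Lemma~\ref{LemmaKL} the trajectory converges a.s.\ to the compact set $X(x_0)\subseteq X^*$, so $\text{dist}(x_k,X^*)\to0$ and $F(x_k)-F^*\to0$ a.s.; applying Egorov's theorem to these two sequences, for every $\delta>0$ one extracts a measurable set $\Omega_\delta$ with $\mathbb{P}[\Omega_\delta]\ge1-\delta$ on which both convergences are uniform, and hence a deterministic index $k_{\delta,\gamma,r}$ such that on $\Omega_\delta$ one has $\text{dist}(x_k,X^*)\le\gamma$ and $F(x_k)-F^*<r$ for all $k\ge k_{\delta,\gamma,r}$. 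On this event the iterates lie in $\bar X$ (or already in $X^*$), so \eqref{KL:alpha} is applicable at $x_k$.

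Next I would set up the linear recursion. For $k\ge k_{\delta,\gamma,r}$, on $\Omega_\delta$, inserting \eqref{KL:alpha} into \eqref{eq:14} yields a per-step decrease
\begin{equation*}
\mathbb{E}[F(x_{k+1})-F^*\mid x_k]\le \big(F(x_k)-F^*\big)-C\mu_\alpha^2\big(F(x_k)-F^*\big)^{2p},
\end{equation*}
with $p=\frac{q-1}{q}$ and $C$ as in \eqref{eq:C}. Since $0\le F(x_k)-F^*\le\Delta_0$ a.s.\ by Lemma~\ref{lem:relSmo} and $2p-1=\frac{q-2}{q}\le0$ for $q\in[1,2]$, one has $\big(F(x_k)-F^*\big)^{2p}\ge\Delta_0^{2p-1}\big(F(x_k)-F^*\big)$, which linearizes the recursion with contraction factor $1-C\mu_\alpha^2\Delta_0^{2p-1}=1-C_2$. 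Unrolling this contraction from $k_{\delta,\gamma,r}$ produces the geometric factor $(1-C_2)^{k-k_{\delta,\gamma,r}}$ in \eqref{ineq:KL}.

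The remaining work is bookkeeping for the failure event $\Omega_\delta^c$ and for the non-adaptedness of $\mathbbm{1}_{\Omega_\delta}$ to the natural filtration: I would split $\mathbb{E}[F(x_k)-F^*]$ over $\Omega_\delta$ and $\Omega_\delta^c$, bound the $\Omega_\delta^c$ contribution by Cauchy--Schwarz using $0\le F(x_k)-F^*\le\Delta_0$ a.s.\ and $\mathbb{P}[\Omega_\delta^c]\le\delta$, and control the slack coming from the stopping-time/supermartingale step by the same estimate; collecting the constant $C_3=\mu_\alpha^2\Delta_0^{2p}C$ gives the additive terms $C_3\sqrt{\delta}$ that appear both inside and outside the parenthesis of \eqref{ineq:KL}.

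\textbf{Main obstacle.} The genuinely delicate point is that \eqref{KL:alpha} holds only in a neighbourhood of $X^*$, and the event on which the iterates have settled in that neighbourhood is not adapted, so the localization step cannot be made rigorous by a naive conditioning argument. The remedy — introducing the stopping time of first exit from $\{x:\text{dist}(x,X^*)\le\gamma,\ F^*<F(x)<F^*+r\}$, showing via Lemma~\ref{LemmaKL} and a supermartingale maximal (Ville/Doob) inequality that it is finite with probability at most $\delta$ once the index is large enough, and absorbing the resulting error into $C_3\sqrt{\delta}$ — is exactly the technical core, and it is carried out as in \cite{MauFadAtt:22,ChoNec:22}.
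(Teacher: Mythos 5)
Your proposal is correct and follows essentially the same route as the paper: Egorov's theorem to extract $\Omega_\delta$ and $k_{\delta,\gamma,r}$, insertion of the local KL inequality \eqref{KL:alpha} into the descent \eqref{eq:14}, linearization via $(F(x_k)-F^*)^{2p}\ge\Delta_0^{2p-1}(F(x_k)-F^*)$ using monotonicity of $F(x_k)$, and a $\sqrt{\delta}$-type bound on $\Omega_\delta^c$ producing the additive $C_3\sqrt{\delta}$ terms. The one remark worth making is that the stopping-time/supermartingale machinery you flag as the technical core is not actually needed: since the descent inequality \eqref{eq:14} holds unconditionally for all $k$, the paper only restricts the KL lower bound to $\Omega_\delta$ via the indicator $\mathbbm{1}_{\Omega_\delta}$, absorbs the complement by a Cauchy--Schwarz estimate (Lemma 9.1 of \cite{ChoNec:22}), and finishes with a short case distinction on whether $\mathbb{E}[F(x_k)-F^*]$ has already dropped below $C_3\sqrt{\delta}$.
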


\begin{proof}
    Since the function $F$ is convex,  Assumption \ref{ass:qgrowth}  with $X=\bar{X}$ \red{is equivalent to \eqref{KL} with $p=\frac{q-1}{q}$ (see \cite{BolNgu:16}). Moreover, \eqref{KL} is equivalent to \eqref{KL:alpha}.} 
  %  \vspace{-0.2cm}
  %  \begin{equation}
  %  \label{ineq:KL}
  %  \mu(F(x) - F^{*})^{p} \leq \|\nabla F(x)\| \quad \forall x \in \text{dist}(x, X^{*}) \leq \gamma, F^{*} < F(x)  < F^{*} + r. \vspace{-0.2cm}
 %   \end{equation}
	From Lemma \ref{LemmaKL}, we have that $F(x_{k}) \inas F^*$ and \red{$\|\nabla F(x_{k}) \|_{1-\alpha}^{*} \inas 0$},  i.e., there exists a set $\Omega$ such that  $ \mathbb{P}[\Omega]  = 1$ and for all $\omega \in \Omega: F(x_{k}(\omega)) \to F^*(\omega)$ and \red{$\|\nabla F(x_{k}(\omega)) \|_{1-\alpha}^{*} \to 0$}.	
Moreover, from the Egorov’s theorem (see  \red{\cite[Theorem 4.4]{SteSha:05}}), we have that for any  $ \delta>0$ there exists a measurable set $\Omega_{ \delta} \subset \Omega$ satisfying $ \mathbb{P}[\Omega_{ \delta}] \geq 1 - \delta$ such that  $F(x_{k})$ converges uniformly to $F^{*}$ and $\nabla F(x_{k})$  converges uniformly to $0$  on the set $\Omega_{ \delta}$.
Since $F$ satisfies inequality \eqref{KL:alpha},  there exists a $k_{ \delta,\gamma,r} > 0$ and $\Omega_{\delta} \subset \Omega$ with $\mathbb{P}[\Omega_{\delta}] \geq 1 - \delta$ such that  $\text{dist}(x_k(\omega), X(x_{0})) \leq \gamma, \; F^* < F(x_{k}(\omega)) < F^* + r$ for all $k \geq k_{\delta,\gamma,r}$ and $\omega \in \Omega_{ \delta}$, and additionally: 
    \vspace{-0.2cm}
	\red{\begin{equation}
		\label{eq:139}
		\mu_{\alpha} (F(x_{k}(\omega)) - F^*)^p  \leq  \|\nabla F(x_{k}(\omega))\|_{1-\alpha}^{*} \quad \forall k \geq k_{\delta,\gamma,r} \text{ and } \omega \in \Omega_{ \delta}. \vspace{-0.2cm}
	\end{equation}}
	\noindent Equivalently, we have:
    \vspace{-0.2cm}
	\red{\begin{equation*}
	  \mathbbm{1}_{\Omega_{ \delta}}	\left( \mu^2_{\alpha} (F(x_{k}) - F^*)^{2p}  \right) \leq  \mathbbm{1}_{\Omega_{ \delta}} \left( \|\nabla F(x_{k})\|^{*}_{1-\alpha}\right)^2 \quad \forall k \geq k_{\delta,\gamma,r}. \vspace{-0.2cm}
	\end{equation*}}
 
	\noindent Taking expectation on both sides of the previous inequality and since $F(x_{k}) \leq F(x_{0})$, from \red{\cite[Lemma 9.1]{ChoNec:22}} we have for all $k \geq k_{\delta,\gamma,r}$:
    \vspace{-0.2cm}
	\begin{align*}
		 \mu_{\alpha}^2\mathbb{E}[ (F(x_{k}) - F^*)^{2p}] - \red{\mu_{\alpha}^2 \Delta_0^{2p} \sqrt{\delta}} &\leq 
		\mathbb{E}[\mathbbm{1}_{\Omega_{ \delta}}	\left( \mu_{\alpha}^2 (F(x_{k}) - F^*)^{2p} \right) ] \\
		&\red{ \leq  \mathbb{E}\left[\mathbbm{1}_{\Omega_{ \delta}}  \left( \|\nabla F(x_{k})\|^{*}_{1-\alpha}\right)^2 \right] \leq \mathbb{E}\left[  \left( \|\nabla F(x_{k})\|^{*}_{1-\alpha}\right)^2 \right].}  \vspace{-0.2cm}
	\end{align*}

\noindent Taking expectation in the inequality \eqref{eq:14}, w.r.t. $\{x_{0}, \cdots, x_{k-1}\}$, and combining with the inequality above, we get:
\vspace{-0.2cm}
\begin{align*}
		 C\left(\mu_{\alpha}^2\mathbb{E}[ (F(x_{k}) - F^*)^{2p}] - \red{\mu_{\alpha}^2 \Delta_0^{2p} \sqrt{\delta}} \right) \leq 
		 \mathbb{E}[ (F(x_{k}) - F^*)]  - \mathbb{E}[ (F(x_{k+1}) - F^*)] .\vspace{-0.2cm}
\end{align*}

\noindent Since $(F(x_{k}))_{k\geq 0}$ is decreasing and $p \in [0,1/2]$, we have that 
\vspace{-0.2cm}
        \begin{align*}
		 C\left(\mu_{\alpha}^2\mathbb{E}[ (F(x_{k}) - F^*)] \Delta_0^{2p-1} - \red{\mu_{\alpha}^2 \Delta_0^{2p} \sqrt{\delta}}  \right) \leq 
		 \mathbb{E}[ (F(x_{k}) - F^*)]  - \mathbb{E}[ (F(x_{k+1}) - F^*)]. \vspace{-0.2cm}
          \end{align*}

\noindent Hence, 
\vspace{-0.2cm}
        \begin{align}
        \label{eq:29}
		 C_2\mathbb{E}[ (F(x_{k}) - F^*)] - C_{3} \sqrt{\delta} \leq 
		 \mathbb{E}[ (F(x_{k}) - F^*)]  - \mathbb{E}[ (F(x_{k+1}) - F^*)].  \vspace{-0.2cm}
\end{align}

\noindent Note that, if   $ \mathbb{E}[F(x_{\bar{k}}) - F^*] \leq C_{3}\sqrt{\delta}$ for some $\bar{k} \geq  k_{\delta,\gamma,r}$, then the statement holds for $k \geq \bar{k}$, since $(F(x_{k}))_{k\geq 0}$ is decreasing. Otherwise, if $ \mathbb{E}[F(x_{k+1}) - F^*] > C_{3}\sqrt{\delta}$, from \eqref{eq:29}, we obtain
\vspace{-0.2cm}
\begin{align*}
		\mathbb{E}[ (F(x_{k+1}) - F^*)] - C_{3} \sqrt{\delta} \leq (1-C_2)\left(\mathbb{E}[ (F(x_{k}) - F^*)] - C_{3} \sqrt{\delta} \right). \vspace{-0.2cm} 
\end{align*}

\noindent Unrolling the sequence, we get  the linear rate. 
\end{proof}

\subsection{Implementation details in relative smooth case}
Some examples of relative smooth functions and their constant are given in \cite{LuFreNes:18}. In this section we derive the relative smooth constant along coordinates of a function presented in \cite{LuFreNes:18}. Moreover, we present an explicit solution for the subproblem in RRCD, for two examples of functions $\phi$. 

\medskip

\noindent \textbf{Example of a function which is relative smooth along coordinates:} \red{Consider the function}
$G(x) = \dfrac{1}{4} \|Ex\|^4 + \dfrac{1}{2} \|Ax-b\|_{\ell_4}^{4} + \dfrac{1}{2} \|Cx-d\|^2$. In \cite{LuFreNes:18} it was proved that this function $G$ is relative smooth w.r.t. the function $\phi(x) = \dfrac{1}{4}\|x\|^4 + \dfrac{1}{2}\|x\|^2$. We consider a more general function defined as $F(x) =  f(x) + \dfrac{1}{4} \|Ex\|^4 + \dfrac{1}{2} \|Ax-b\|_{\ell_4}^{4} $, where $f$ is coordinate-wise Lipschitz with constants $L_{i}(f)>0$ for $i=1,\cdots,N$, see Defintion \ref{def:lip}. 
Let us derive the relative smooth constants along coordinates of $F$. Consider $D(x) = \text{diag}(Ax-b)$, then we have
\begin{align*}
	U_{i}^{T}\nabla^2F(x)U_{i} &= U_{i}^{T}\nabla^2f(x)U_{i} + \|Ex\|^2 (EU_{i})^{T}(EU_{i}) \\ &+ \red{2 (EU_{i})^{T}Exx^{T}E^{T}EU_{i}}
	+ 3 (AU_{i})^{T}D^2(x)AU_i .
\end{align*}

\noindent Hence,
\begin{align*}
	& \|U_{i}^{T}\nabla^2F(x)U_{i}\| \leq \|U_{i}^{T}\nabla^2f(x)U_{i}\| + \|EU_{i}\|^2 \|E\|^2 \|x\|^2 \\ &+ 2\|EU_{i}\|^2 \|E\|^2 \|x\|^2 + 3\|AU_{i}\|^2(\|b\| + \|A\| \|x\|)^2 \\
	&= \left( L_{i}(f) +  3\|AU_{i}\|^2 \|b\|^2 \right) \!+\! 6\|AU_{i}\|^2\|b\|\|A\| \|x\| \!+\! \left( 3\|EU_{i}\|^2 \|E\|^2 \!+\! 3\|AU_{i}\|^2 \|A\|^2\right) \|x\|^2.
\end{align*}

\noindent Since 
\begin{align*}
	U_{i}^{T}\nabla^2 \phi(x)U_{i} = 2 U_{i}^{T}xx^{T}U_{i} + (\|x\|^2  + 1)  I_{n_{i}}  \succeq (\|x\|^2  + 1) I_{n_{i}}, 
\end{align*}

\noindent  we have 
\begin{align*}
	L_{i} &= L_{i}(f) + 3\|AU_{i}\|^2 \|b\|^2   + 6\|AU_{i}\|^2\|b\|\|A\| + 3\|EU_{i}\|^2 \|E\|^2 + 3\|AU_{i}\|^2 \|A\|^2 \\
	&= L_{i}(f) + 3 \|AU_i\|^2 \left(\|b\| + \|A\|    \right)^2 + 3 \|EU_{i}\|^2 \|E\| .
\end{align*}

\medskip

 \noindent Hence,  we can use algorithm  RRCD for minimizing this function $F$ and Step 2 of the algorithm RRCD can be efficiently implemented, i.e., in closed form as we will show below. On the other hand, if we consider $F=f+\psi$, where $f$ has coordinate-wise Lipschitz gradient and  $\psi(x)=\dfrac{1}{4} \|Ex\|^4 + \dfrac{1}{2} \|Ax-b\|_{\ell_4}^{4}$, then one can use the algorithms from \cite{ChoNec:22,NecCho:21} for minimizing $F$. However, in these algorithms one needs to solve a subproblem of the form: 
\vspace{-0.2cm}
\begin{equation}
	\label{eq:subproblem2}
	\;\; d_{k} = \arg \min_{d \in \mathbb{R}^{n_{i}}} f(x_{k}) +  \langle U^{T}_{i_k} \nabla f(x_{k}), d \rangle + \dfrac{L_{i_{k}}}{2} \|d\|^2 + \psi(x_{k} + U_{i_k}d),
	\vspace*{-0.1cm}	
\end{equation}

\noindent which does not have a closed form solution and it is more difficult to solve it.

\medskip

\noindent \textbf{Efficient solution for subproblem:} 
Note that solving \eqref{eq:subproblem} is equivalent to minimizing
\vspace{-0.2cm}
\begin{align}
	d^{*} = \arg \min_{d \in \mathbb{R}^{n_{i}}}  \langle U^{T}_{i} \nabla F(x) - L_{i} U^{T}_{i} \nabla \phi(x) , d \rangle + L_{i} \phi(x+U_id). \label{eq:subp} \vspace{-0.3cm}
\end{align}

\noindent Next, we show that if a function $F$ is relative smooth along coordinates w.r.t. $\phi_1(x) = \dfrac{1}{p} \|x\|^p + \dfrac{1}{2}\|x\|^2$, with $p> 2$, or $\phi_2(x) = \dfrac{1}{2} \langle Ax,x\rangle$, then the subproblem of RRCD can be solved in  closed form. Moreover, if the level set of the function $F$, $\mathcal{L}_{F}(x_0)$, is bounded, then Assumption \ref{ass:relSmooth}[A3] holds for $\phi_1$ and $\phi_2$, since both are twice differentiable.

\medskip

\noindent 1. Consider $\phi(x) = \dfrac{1}{p}\|x\|^p + \dfrac{1}{2}\|x\|^2$ with $p> 2$. In this case we have,
$\nabla \phi(x) = \|x\|^{p-2}x + x$. Moreover, the subproblem \eqref{eq:subp} becomes
\vspace{-0.2cm}
\begin{align*}
	d^{*} = \arg \min_{d \in \mathbb{R}^{n_{i}}} \langle U^{T}_{i} \nabla F(x) - L_{i} \|x\|^{p-2}U_{i}^{T}x , d \rangle + \dfrac{L_{i}}{2} \|d\|^2 + \dfrac{L_{i}}{p} \|x+U_id\|^p. \vspace{-0.2cm}
\end{align*}

\noindent Denoting $g_{i}(x) := U^{T}_{i} \nabla F(x) - L_{i} \|x\|^{p-2}U_{i}^{T}x  $, we have 
\vspace{-0.2cm}
\begin{align*}
	d^{*} = \arg \min_{d \in \mathbb{R}^{n_{i}}} \langle g_{i}(x), d \rangle + \dfrac{L_{i}}{2} \|d\|^2 + \dfrac{L_{i}}{p} \|x+U_id\|^p. \vspace{-0.2cm}
\end{align*}

\noindent From the optimality condition, we have
\vspace{-0.2cm}
\begin{align}
	&g_{i}(x) + L_i d^{*} + L_i \|x+U_i d^{*}\|^{p-2} U_{i}^{T} (x+ U_{i}d^{*}) = 0 \nonumber \\
	&\Leftrightarrow (L_i + L_i \|x+U_i d^{*}\|^{p-2}) d^{*} = -g_{i}(x) - L_i\|x+U_i d^*\|^{p-2}U_{i}^{T}x. \label{eq:24}
\end{align}

\noindent Note that if the value of $\|x+U_i d^*\|$ is known, then $d^{*}$ can be computed as:
\vspace{-0.2cm}
\begin{align*}
	d^{*} = \dfrac{-g_{i}(x) - L_i\|x+U_i d^*\|^{p-2}U_{i}^{T}x }{L_i(1 + \|x+U_i d^*\|^{p-2})}. \vspace{-0.2cm}
\end{align*}

\noindent Next, in order to get the value of $\|x+U_i d^*\|$ we need to find a positive root of a polynomial equation. Indeed, multiplying the equality \eqref{eq:24} by $U_{i}$ and adding  $\left(L_{i}+\|x+U_i d^*\|^{p-2}\right)x$ on both sides of the inequality, we have 
\vspace{-0.2cm}
\begin{align*}
	&(L_i + L_i \|x+U_i d^{*}\|^{p-2}) (x+ U_{i}d^{*}) \\
	&= -U_{i}g_{i}(x) - L_i\|x+U_i d^*\|^{p-2}U_{i}U_{i}^{T}x + \left(L_{i}+\|x+U_i d^*\|^{p-2}\right)x. \vspace{-0.2cm}
\end{align*}

\noindent Taking the norm on both sides of the equality and denoting $\alpha = \|x+U_i d\|$ we have
\vspace{-0.2cm}
\begin{align*}
	&L_i\alpha + L_i \alpha^{p-1} 
	=\| U_{i}g_{i}(x) + L_i\alpha^{p-2}(U_{i}U_{i}^{T}x - x) -L_{i}x\| \\
	& \Leftrightarrow ( L_i\alpha + L_i \alpha^{p-1} )^2 = \| U_{i}g_{i}(x) + L_i\alpha^{p-2}(U_{i}U_{i}^{T}x - x) -L_{i}x\|^2 \\
	& \Leftrightarrow L_i^2 \alpha^{2(p-1)} + 2L_{i}^2 \alpha^{p} + L_{i}^2\alpha^2 = \|g_{i}(x) - L_{i}U^{T}_i x\|^2 + (L_{i}\alpha^{(p-2)} + L_{i})^2 \sum_{j\neq i} \|x^{(j)} \|^2 \\
	& \Leftrightarrow L_i^2 \alpha^{2(p-1)} + 2L_{i}^2 \alpha^{p} + L_{i}^2\alpha^2 \\
	&\qquad = \|g_{i}(x) - L_{i}U^{T}_i x\|^2 + (L_{i}^2\alpha^{2(p-2)} + 2L_{i}^2\alpha^{(p-2)}+ L_{i}^2) \sum_{j\neq i} \|x^{(j)} \|^2\\
	& \Leftrightarrow L_i^2 \alpha^{2(p-1)} - L_{i}^2\alpha^{2(p-2)} \sum_{j\neq i} \|x^{(j)} \|^2
	+ 2L_{i}^2 \alpha^{p} - 2L_{i}^2\alpha^{(p-2)} \sum_{j\neq i} \|x^{(j)} \|^2 + L_{i}^2\alpha^2  \\
	& \qquad  = \|g_{i}(x) - L_{i}U^{T}_i x\|^2 + L_{i}^2 \sum_{j\neq i} \|x^{(j)} \|^2. \vspace{-0.2cm}
\end{align*}

\noindent Hence, the value of $\|x+U_id\|$ is a nonnegative root of the polynomial
\vspace{-0.1cm}
\begin{align*}
	a\alpha^{2(p-1)} -  b\alpha^{2(p-2)} + 2a \alpha^{p} - 2b\alpha^{(p-2)} + a \alpha^{2} - \red{c} = 0, \vspace{-0.2cm}
\end{align*}

\noindent with 
\vspace{-0.2cm}
\begin{align*}
	&a =  L_i^2, \,\ b = L_{i}^2\sum_{j\neq i} \|x^{(j)} \|^2 \text{ and } \red{c} = \|U_{i}g_{i}(x) - L_{i}U^{T}_i x\|^2 + L_{i}^2 \sum_{j\neq i} \|x^{(j)} \|^2. \vspace{-0.2cm}
 \end{align*}

\noindent In particular, for $p=4$ (as in the previous example) we have 
\vspace{-0.2cm}
\begin{align}
	a\alpha^{6} + (2a-b)\alpha^{4} + ( a - 2b)\alpha^{2} - \red{c} = 0. \label{eq:pol4} \vspace{-0.2cm}
\end{align}

\noindent Note that the polynomial above has only one change of sign when $\red{c}\neq0$. 
Then, using Descarte's rule of signs  we have that \eqref{eq:pol4} has only one positive root. Moreover, if $\red{c}=0$, then $\alpha =0$ is a root of \eqref{eq:pol4}.

\medskip

\noindent 2. Now consider $\phi(x) = \langle Ax,x\rangle$. Let us consider the scalar case, i.e., $U_{i} = e_{i}$. Then, $\nabla \phi(x) = Ax$ and the subproblem \eqref{eq:subp} becomes
\vspace{-0.2cm}
\begin{align*}
	d^{*} = \arg \min_{d \in \mathbb{R}}  \langle e^{T}_{i} \nabla F(x) - L_{i} e^{T}_{i}Ax , d \rangle + L_{i} \langle A(x+e_id), x+e_id\rangle.
 \vspace{-0.2cm}
\end{align*}

\noindent Hence, from the optimality conditions we have
\vspace{-0.2cm}
\begin{align*}
	e^{T}_{i} \nabla F(x) - L_{i} e^{T}_{i}Ax + L_{i} e^{T}_{i}A(x+e_id^{*}) = 0. \vspace{-0.2cm}
\end{align*}

\noindent Then,
\vspace{-0.2cm}
\begin{align*}
	d^{*} = \dfrac{- e^{T}_{i} \nabla F(x)}{L_{i} A_{ii}}, \vspace{-0.2cm}
\end{align*}

\noindent where $A_{ii}$ is the $i$th component of the diagonal of A.

%%%%%%%%%%%%%%%%%%%%%%%%%%%%%%%%%%%%

\section{Simulations}
\label{sec:Simulations}
In this section we consider two applications. In the first problem the objective is quadratic with an $\ell_2$ regularization. For this application we test the performance of the smoothing techniques in the convex case.  In the second problem we consider a quadratic objective with TV regularization and  test the performance of the smoothing techniques in the $q$-growth case. 
All the implementations are in Matlab and only one component is updated at each iteration, i.e., $U_{i}=e_{i}$ in all the simulations. 

\subsection{Quadratic objective with $\ell_2$ regularization}
We consider the problem
\begin{equation*}
\min_{x \in \mathbb{R}^{n}} \dfrac{1}{2}\|Bx- c\|^2 + \lambda \|x\| 
\end{equation*}

\noindent with $B \in \mathbb{R}^{m\times n}$ and $c\in\mathbb{R}^{m}$.
We apply  Algorithms 1 and 2 using Moreau envelope, Forward-Backward envelope, Douglas-Rachford envelope and Nesterov's smoothing. Matrix $B \in \mathbb{R}^{m \times n}$ is generated sparse from a normal distribution $\mathcal{N}(0,1)$. Vector $c \in \mathbb{R}^{m}$ and the starting point $x_{0} \in \mathbb{R}^{n}$ are also generated  from a normal distribution $\mathcal{N}(0,1)$. We compared the following algorithms: \\
1. ME-CD: Moreau envelope using Algorithm 1. \\
2. ME-ACC: Moreau envelope using Algorithm 2. \\
3. FB-CD: Forward-Backward envelope using Algorithm 1. \\
4. FB-ACC: Forward-Backward envelope using Algorithm 2. \\
5. DR-CD: Douglas-Rachford envelope using Algorithm 1. \\
6. DR-ACC: Douglas-Rachford envelope using Algorithm 2. \\
7. NS-CD: Nesterov's smoothing using Algorithm 1. \\
8. NS-ACC: Nesterov's smoothing using Algorithm 2. 

\noindent We stop the algorithms when
\vspace{-0.2cm}
\begin{equation*}
\|\nabla F_{\gamma}(x)\| \leq 10^{-1}, \vspace{-0.2cm}
\end{equation*}
\noindent and we report in Table 2 the full iterations and the cpu time in seconds for each method. Moreover, we plot the function values along time (in seconds) in  Figure 1. \red{Note that since $m<n$, then the problem is convex, but the objective function is  not strongly convex and thus the $2$-growth condition does not hold in this case.}
For Forward-Backward envelope, Douglas Rachford envelope and Nesterov's smoothing, we have an explicit expression for $e_{i}^{T} \nabla F_{\gamma}(x)$. However, for the Moreau envelope, we do not have a explicit solution for the proximal operator and in this case  we use CVX to compute the prox.  As expected, the results from Table \ref{table:1} and Figure 1 show that the accelerated coordinate descent algorithm has better performance compared to the non-accelerated variant. Moreover, as we can see from Table \ref{table:1}, Douglas-Rachford had a better performance in the norm of the gradient, while in the function values  Forward-Backward and Douglas-Rachford are comparable (see Figure 1). Also  Nesterov's smoothing has a better performance when $\lambda$ is small.
%\vspace{-0.2cm}
\begin{table}[h!]
	\centering    
	\begin{tabular}{| c| c| c| c | c | c | c| c | c | c |}
		\hline
		\text{n} & 100 & 100 & 100 & $10^3$ & $10^3$ & $10^3$ & $10^4$ & $10^4$ & $10^4$ \\ \hline
		m       &   50&   50    & 50   & 500        & 500      &   500 & $5 \cdot 10^3$ & $5 \cdot 10^3$ & $5 \cdot 10^3$     \\ \hline
		$\lambda$ & 1 & 0.5 & 0.1  & 1 & 0.5 & 0.1 & 1 & 0.5 & 0.1  \\ \hline
		\multirow{2}{*}{ME-CD} & 874  & 1265 & 244 & & & &  &  &  \\ 
		& 18095  & 25900 & 5013 & ** & ** & ** & ** & ** & ** \\ \hline
		\multirow{2}{*}{ME-ACC} & 119  & 108 & 68 & & & & &  &  \\ 
		&  2472 & 2238 & 1411 & ** & ** & ** & ** & ** & ** \\  \hline
		\multirow{2}{*}{FB-CD} & 802  & 1204 & 227 & 3585 & 6001 & 1240  & & & 4070  \\ 
		& 0.802  & 1.22 & 0.227 & 110 & 184 & 37.3 & ** & ** & 18454  \\ \hline
		\multirow{2}{*}{FB-ACC} & 160  & 99 & 104  & 248 & 209 & 249  & 475 & 382 & 423  \\ 
		& 0.228  & \textbf{0.144} & \textbf{0.149} & 8.96 & 7.57 & 8.98 & 2388 & 1831 & 2112 \\  \hline
		\multirow{2}{*}{DR-CD} & 1510  & 2336 & 453  & 6732 & 11303 & 2328  &  &  & 7691   \\ 
		& 0.926  & 1.38 & 0.285 & 71.5 & 115 & 24.7 & ** & ** & 11196  \\  \hline
		\multirow{2}{*}{DR-ACC} & 215  & 204 & 162 & 456 & 290 & 359  & 658 & 536 & 604  \\ 
		& \textbf{0.223}  & 0.216 & 0.167 & \textbf{8.77} & \textbf{5.8} & 7.05 & \textbf{1600} & \textbf{1248} & 1472  \\  \hline
		\multirow{2}{*}{NS-CD} & 12734  & 10985 & 440  & 43253 & 35749 & 1364  & & & 4739 \\ 
		&  7.86 & 6.84 & 0.265 & 337 & 284 & 10.4 & ** & ** & 2031 \\  \hline
		\multirow{2}{*}{NS-ACC} & 653 & 449 & 151  & 1175 & 801 & 254  & 2104 & 1424 & 459  \\ 
		&  0.668 & 0.477 & 0.155 & 15.7 & 11 & \textbf{3.3}  & 2131 & 1307 & \textbf{474} \\  \hline
	\end{tabular}
		\caption{Comparison of Algorithms 1 and 2 for four different smoothing techniques and several problem dimensions in terms of number of full iterations and  cpu time (sec). }
 \label{table:1}
\end{table}

%\vspace{-0.5cm}

\begin{figure}[ht!]
\label{funcL2}
	\centering
	\includegraphics[width=6.3cm]{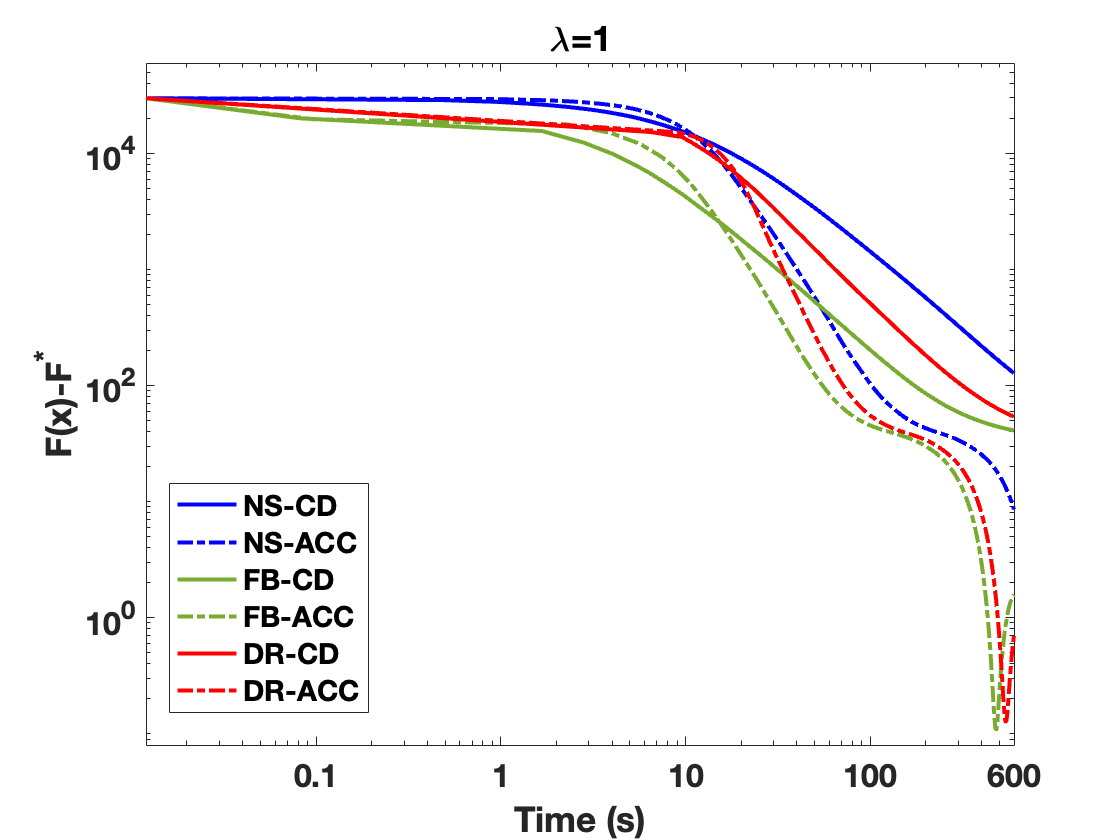} 
	\includegraphics[width=6.3cm]{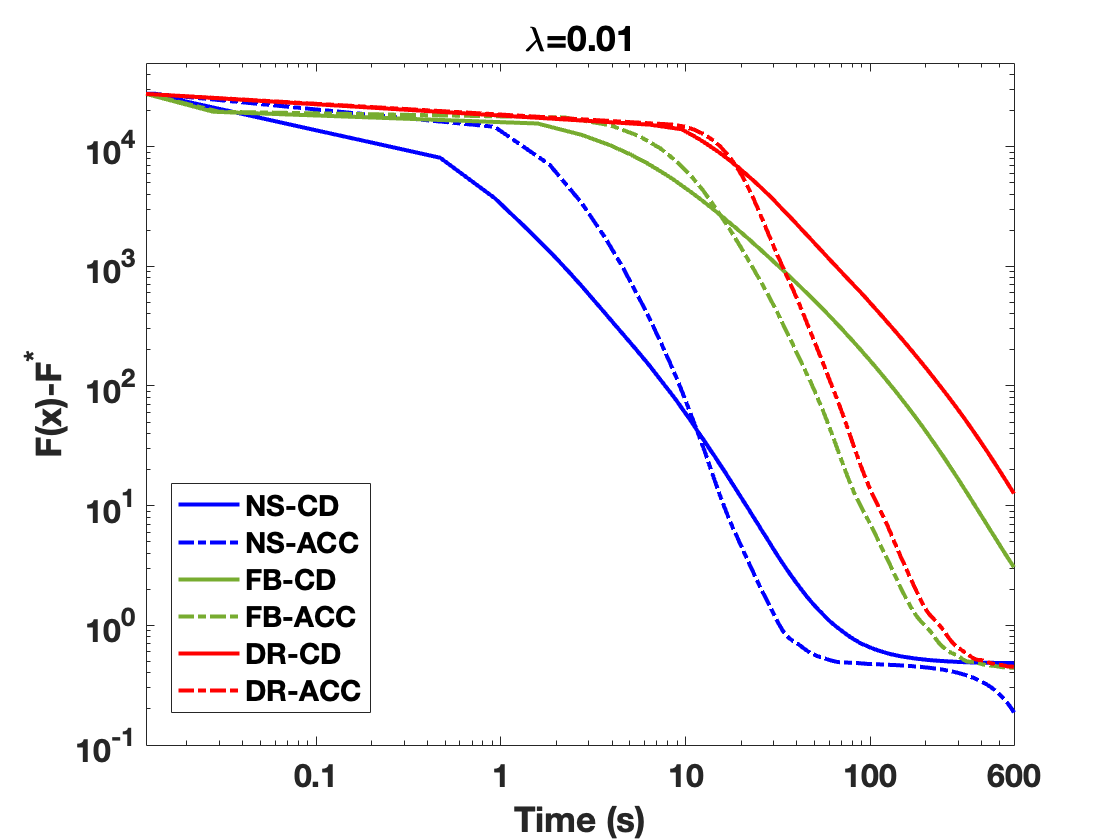} 
	\caption{Evolution of Algorithms 1 and 2 for four different smoothing techniques in  function values along time for quadratic objective with $\ell_2$ regularization and $n=10^4$.}
\end{figure}

%\vspace{-0.5cm}
\subsection{Quadratic objective  with TV regularization}
We also consider a quadratic problem with TV regularization:
%\vspace{-0.2cm}
\begin{equation}
\label{eq:TV}
  \min_{x \in \mathbb{R}^{n}}  \dfrac{1}{2} \|Bx - c \|^2 + \lambda \sum_{i=1}^{n-1} |x^{(i)} - x^{(i+1)}| 
  \vspace{-0.2cm}
\end{equation}
\noindent with $B \in \mathbb{R}^{n\times n}$ and $c\in\mathbb{R}^{n}$. Vectors  $c\in\mathbb{R}^{n}$ and $x_{0} \in \mathbb{R}^{n}$ are generated  from a normal distribution $\mathcal{N}(0,1)$ and the matrix $B \in \mathbb{R}^{n \times n}$ is generated as $B = Q^{T}CQ$, where $Q \in \mathbb{R}^{n \times n}$ is a sparse orthogonal matrix  and $C \in \mathbb{R}^{n \times n}$  is a diagonal matrix such that { $C=\text{diag} (100,\text{rand}(n/2-1,1),\text{zeros}(n/2,1))$.
Note that this function satisfies $q$-growth condition with $q=2$. Indeed, consider the function $g:\mathbb{R}^{n} \to \mathbb{R}$ such that $g(y) = \dfrac{1}{2}\|y - c \|$. We have that $g$ is strongly convex with strong convexity paramenter $\sigma_{g} = 1$ and the gradient of $g$ is Lipschitz with constant $L_{g}=1$. Moreover $\lambda \sum_{i=1}^{n-1} |x^{(i)} - x^{(i+1)}|$ is a polyhedral function. Hence, from \cite[Theorem 10]{NecNesGli:19}), the objective function in \eqref{eq:TV}
satisfies $q$-growth condition with $q=2$}. Thus, we can also apply Algorithm 3 for solving this problem. 
We consider the following methods: \\
1. NS-CD: Nesterov's smoothing using Algorithm 1. \\
2. NS-RT: Nesterov's smoothing using Algorithm 3. \\
3. FB-CD: Forward-Backward envelope using Algorithm 1. \\
4. FB-RT: Forward Backward evelope using Algorithm 3. \\
5. ME-CD: Moreau envelope using Algorithm 1. \\
6. ME-RT: Moreau evelope using Algorithm 3. \\
7. PROX: full proximal gradient method. \\
8. APPROX-RT: full restart accelerated algorithm proposed in \cite{FerQu:20}.

\noindent In all restart accelerated algorithms we update the restart periods as in Assumption \ref{assum}. Moreover, the first restart period is chosen $K_{0} = \lfloor0.01 e \cdot n \rfloor$.
We plot the function values along time (in seconds) in Figure 
\ref{figTV1}. 
As we can see from this figure, Algorithms 1 and 3 on Nesterov's smoothing are the fastest, with restart accelerated variant having the better performance.

\vspace{-0.4cm}

\begin{figure}[ht!]
	\centering
	\includegraphics[width=6.3cm,height=5cm]{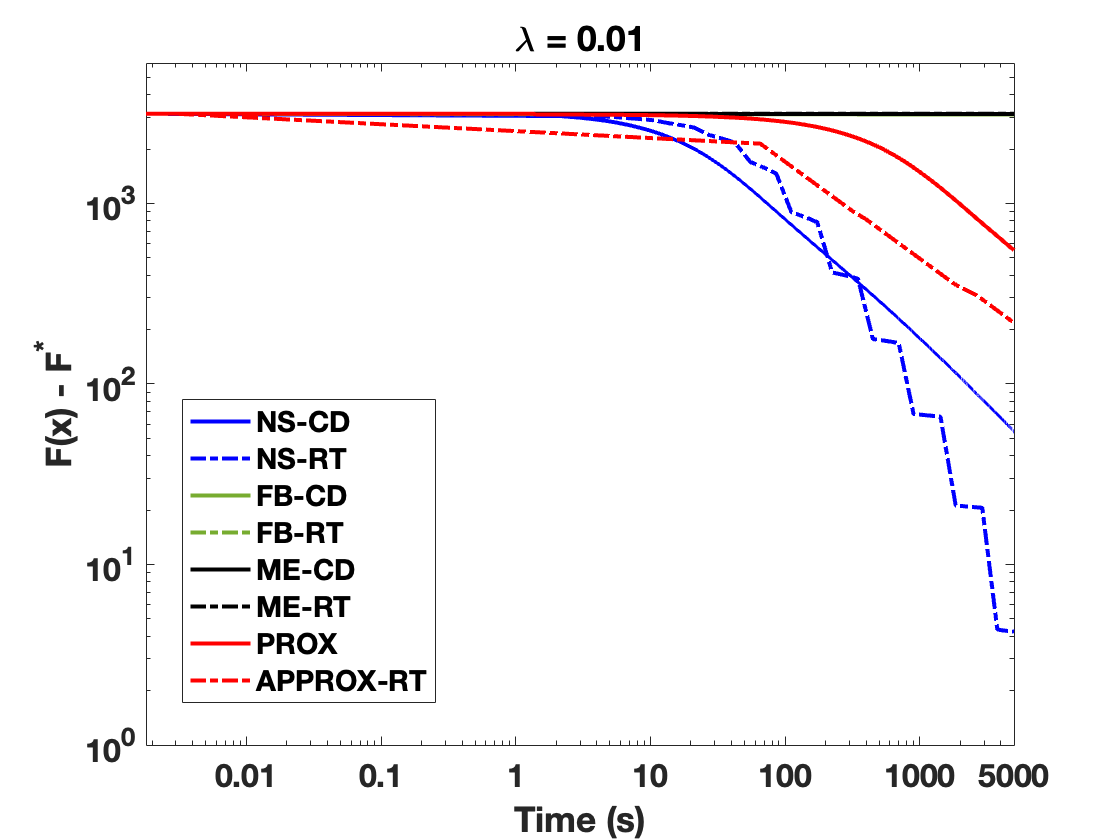} 
	\includegraphics[width=6.3cm,height=5cm]{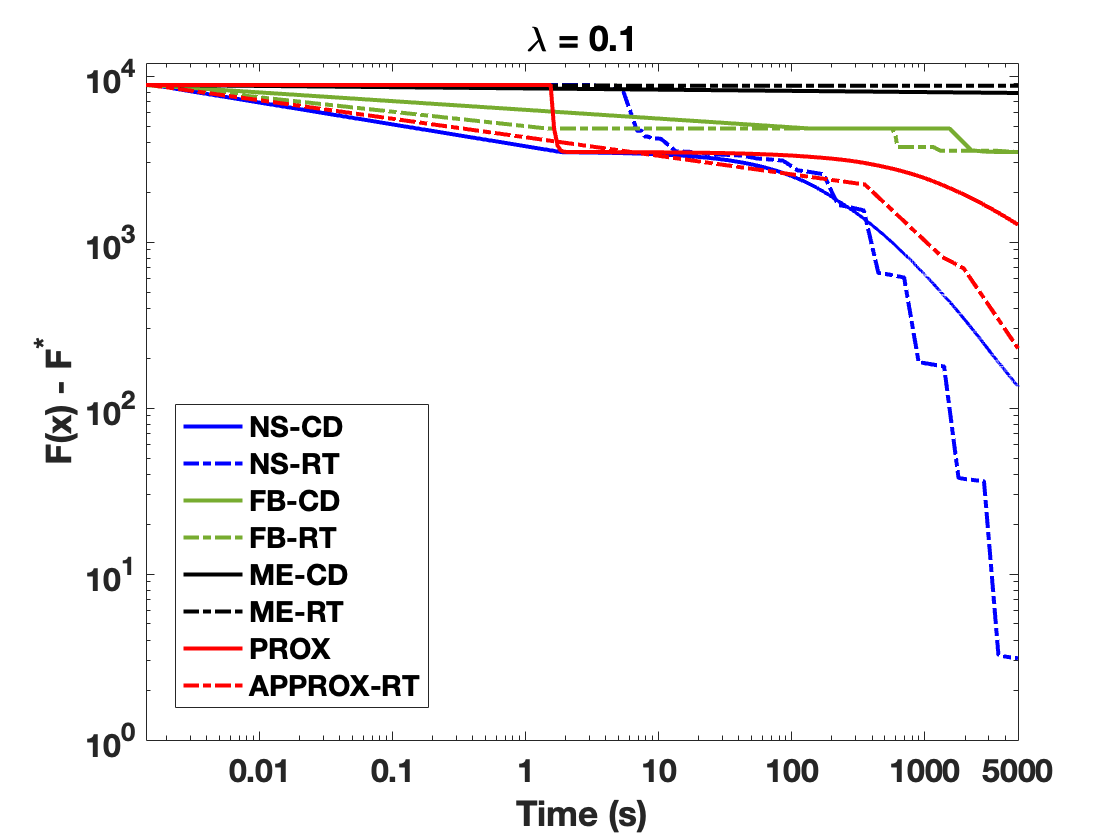} 
	\caption{ Evolution of Algorithms 1, 3 and their full variants for three different smoothing techniques in  function values along time for quadratic objective with TV regularization and $n=10^4$.}
 \label{figTV1}
\end{figure}

%\begin{figure}[ht!]
%	\centering
%	\includegraphics[width=6.3cm,height=5cm]{TV-test8.png} 
%	\includegraphics[width=6.3cm,height=5cm]{TV-test9.png} 
%	\caption{ Function value along time with $n=10^4$, strongly convex case}
% \label{figTV2}
%\end{figure}

\vspace{-0.5cm}

\section{Conclusions}
\label{sec13}
In this paper we have considered  a general framework for smooth approximations of nonsmooth and nonseparable objective functions, which covers in particular Moreau envelope, Forward-Backward envelope, Douglas-Rachford envelope and Nesterov's smoothing. We have derived  convergence rates  for random (accelerated) coordinate descent methods for minimizing the smooth approximation. Moreover, under an additional $q$-growth assumption on the original function, we have derived improved rates (even linear) for  (restart accelerated) coordinate descent variants.  We  have also introduced a relative randomized coordinate descent algorithm when the original function is  relative smooth w.r.t. a (possibly nonseparable) differentiable function. Convergence rates have been also derived for this algorithm in the convex and $q$-growth cases. We have tested our numerical methods on two well-known applications and the numerical results show their efficiency.

\bmhead{Acknowledgments}
The research leading to these results has received funding from: TraDE-OPT funded by the European Union’s Horizon 2020 Research and Innovation Programme under the Marie Skłodowska-Curie grant agreement No.  861137;  UEFISCDI PN-III-P4-PCE-2021-0720, under project L2O-MOC, nr.  70/2022.

\vspace{-0.3cm}

\section*{Declarations}
\vspace*{-0.4cm}
There is no conflict of interest.

\begin{appendices}
\section{}\label{secA1}

\noindent  \red{  \textbf{Proof of Lemma 1:} Consider $x\in X$ and $h_{0} \in \mathbb{R}^{n_{i}}$, such that $x+U_{i}h_{0} \in X$, fixed. Define the function $r : \mathbb{R}^{n_{i}} \to \mathbb{R}$:
\begin{equation}
	\label{eq:r}
	r(h) = f(x+U_{i}h) - \langle U_{i}^{T}\nabla f(x+U_{i}h_{0}),h \rangle.
\end{equation} }
\red{
\noindent Since $f$ is convex along coordinates, it implies that $r$ is convex. Moreover,
\begin{equation*}
	 \nabla r(h) = U_{i}^{T}\nabla f(x+U_{i}h) - U_{i}^{T}\nabla f(x+U_{i}h_{0}).
\end{equation*} }
\red{
\noindent Hence $h^{*} = h_{0}$ is a optimal point of the function \eqref{eq:r}.  This implies that
\begin{equation*}
 r(h_{0}) \leq r\left(x - \dfrac{1}{L_{i}} U_{i} \nabla r(0)\right).   
\end{equation*} }
\red{
\noindent Combining the inequality above with \eqref{lip:3}, we obtain:
\begin{align*}
	&f(x+U_{i}h_{0}) - \langle U_{i}^{T}\nabla f(x+U_{i}h_{0}),h_{0} \rangle \\
	 &\leq f\left(x - \dfrac{1}{L_{i}} U_{i} \nabla r(0)\right)  + \dfrac{1}{L_{i}}  \langle U_{i}^{T}\nabla f(x+U_{i}h_{0}), \nabla r(0) \rangle \\ 
	 &\leq f(x)  + \dfrac{1}{L_{i}}  \langle U_{i}^{T}\nabla f(x+U_{i}h_{0}) - U_{i}^{T}\nabla f(x), \nabla r(0) \rangle + \dfrac{1}{2L_{i}} \|\nabla r(0)\|^{2} 	 \\
	 &= f(x)  - \dfrac{1}{L_{i}}  \| \nabla r(0) \|^2 + \dfrac{1}{2L_{i}} \|\nabla r(0)\|^{2} \\
	 &= f(x)  - \dfrac{1}{2L_{i}} \| \nabla r(0) \|^2. 
\end{align*}}
\red{
\noindent This implies that:
\begin{equation*}
	f(x+U_{i}h_{0}) - \langle U_{i}^{T}\nabla f(x+U_{i}h_{0}),h_{0} \rangle +  \dfrac{1}{2L_{i}}  \|  U_{i}^{T}\nabla f(x+U_{i}h_{0}) - U_{i}^{T}\nabla f(x)\|^{2} \leq f(x).
\end{equation*} }
\red{
\noindent Exchanging $x$ and  $x+U_{i}h_{0}$ in the inequality above and summing up, we obtain:
\begin{equation*}
	\dfrac{1}{L_{i}}  \|  U_{i}^{T}\nabla f(x+U_{i}h_{0}) - U_{i}^{T}\nabla f(x)\|^{2}  \leq \langle U_{i}^{T}\left( \nabla f(x+U_{i}h_{0}) - \nabla f(x)\right), h_{0} \rangle.
\end{equation*}}
\red{
\noindent Using the Cauchy-Schwarz inequality, we further  get:
\begin{equation*}
	\|  U_{i}^{T}\nabla f(x+U_{i}h_{0}) - U_{i}^{T}\nabla f(x)\| \leq L_{i} \|h_{0}\|.
\end{equation*} }
\red{
\noindent Hence, the statement follows. }
\end{appendices}

\bibliography{sn-bibliography}
\end{document}